\definecolor{darkgreen}{rgb}{0.1,0.7,0.1}
\definecolor{darkred}{rgb}{0.7,0.1,0.1}
\DeclareMathOperator{\Tr}{Tr}
\newtheorem{theorem}{Theorem}
\newtheorem{lemma}{Lemma}[section]
\newtheorem{proposition}[lemma]{Proposition}
\newtheorem{remark}[lemma]{Remark}
\newcommand{\ru}{\hat{r}}
\newcommand{\yu}{\hat{y}}
\newcommand\symb[2][\bf]{{\mathchoice{\hbox{#1#2}}{\hbox{#1#2}}%
        {\hbox{\scriptsize#1#2}}{\hbox{\tiny#1#2}}}}
\def\R{{\symb R}}
\def\N{{\symb N}}
\def\Z{{\symb Z}}
\def\C{{\symb C}}
\def\P{{\symb P}}
\def\un{\mathbf{1}}
\renewcommand{\P}{\mathbb{P}}
\newcommand{\E}{\mathbb{E}}
\newcommand{\cB}{\mathcal{B}}
\newcommand{\cC}{\mathcal{C}}
\newcommand{\cD}{\mathcal{D}}
\newcommand{\cE}{\mathcal{E}}
\newcommand{\cF}{\mathcal{F}}
\newcommand{\cH}{\mathcal{H}}
\newcommand{\cL}{\mathcal{L}}
\newcommand{\cM}{\mathcal{M}}
\newcommand{\ccC}{\mathscr{C}}
\newcommand{\gl}{\lambda}
\begin{document}

\title[Anderson localization]{Anderson localization for the $1$-d Schr\"odinger operator\\with white noise potential}

\author{Laure Dumaz}
\address{
	CNRS \& Department of Mathematics and Applications, \'Ecole Normale Sup\'erieure (Paris), 45 rue d’Ulm, 75005 Paris, France}
\email{laure.dumaz@ens.fr}

\author{Cyril Labb\'e}
\address{Universit\'e Paris Cit\'e, Laboratoire de Probabilit\'es, Statistiques et Mod\'elisation, UMR 8001, F-75205 Paris, France}
\email{clabbe@lpsm.paris}

\vspace{2mm}

\date{\today}

\maketitle

\begin{abstract}
We consider the random Schr\"odinger operator on $\R$ obtained by perturbing the Laplacian with a white noise. We prove that Anderson localization holds for this operator: almost surely the spectral measure is pure point and the eigenfunctions are exponentially localized. We give two separate proofs of this result. We also present a detailed construction of the operator and relate it to the parabolic Anderson model. Finally, we discuss the case where the noise is smoothed out.

\medskip

\noindent
{\bf AMS 2010 subject classifications}: Primary 60H25, 82B44; Secondary 35P20. \\
\noindent
{\bf Keywords}: {\it Anderson localization; Hill's operator; Schr\"odinger operator; full space; white noise; Sturm-Liouville.}
\end{abstract}

\setcounter{tocdepth}{1}
\tableofcontents

\section{Introduction}

The present article is concerned with the spectral properties of the random Schr\"odinger operator $\cH$ defined by
$$ \cH = -\partial_x^2 + \xi\;,\quad \mbox{ on }\R\;,$$
where $\xi$ is Gaussian white noise.\\

In a series of recent works~\cite{DL_Bottom,DL_Crossover,DL_Critical}, we considered the finite volume version of this operator, namely the operator $\cH_L$ defined by
$$ \cH_L = -\partial_x^2 + \xi\;,\quad \mbox{ on }(-L/2,L/2)\;,$$
(endowed with Dirichlet b.c.~but this is unimportant) and investigated the behavior of its eigenvalues and eigenfunctions in the infinite volume limit $L\to\infty$. A surprising diversity of behaviors arises according to the region of the spectrum one looks at. Let us simply mention that the transition between localized and delocalized eigenfunctions occurs at energies $E=E(L)$ of order $L$: at energies much smaller than $L$, most of the $L^2$-masses of the eigenfunctions come from negligible portions of the interval $(-L/2,L/2)$ while at energies of order $L$ or higher the eigenfunctions are spread over the whole interval.\\

In the present work, we complete the picture and provide detailed information on the spectrum of the infinite volume operator. To state the main result, let us first introduce the so-called Lyapunov exponent $\gamma_\lambda$, which is the (deterministic) exponential rate at which the solutions of $-y'' + \xi y = \lambda y$ grow at infinity, see Subsection \ref{Subsec:Lyapunov} for a precise definition. In the present situation, it happens that the Lyapunov exponent admits an explicit integral expression:
	$$ \gamma_\lambda := \frac{\int_0^\infty {\sqrt v} \exp(-2\lambda v - \frac{v^3}{6}) dv}{2\int_0^\infty \frac1{\sqrt v} \exp(-2\lambda v - \frac{v^3}{6}) dv}\;,\quad \lambda \in \R\;.$$
Our main result is as follows.
\begin{theorem}\label{Th:Expo}
	Almost surely, the spectrum of $\cH$ is pure point and equals $\R$, and for every eigenvalue $\lambda$ of $\cH$ the associated eigenfunction $\varphi_\gl$ satisfies 
	$$ - \frac{\ln (|\varphi_\gl(t)|^2 + |\varphi_\gl'(t)|^2)^{1/2}}{\vert t\vert} \to \gamma_\gl\;,\quad |t|\to\infty\;.$$
\end{theorem}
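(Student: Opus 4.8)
The plan is to run the classical one-dimensional localization program, with the positivity of the Lyapunov exponent supplied directly by the explicit formula. First I would note that $\gamma_\lambda>0$ for every $\lambda\in\R$: the factor $\exp(-v^3/6)$ makes both integrals converge for all real $\lambda$, and since the integrands are strictly positive the quotient is a finite strictly positive number. The equality $\sigma(\cH)=\R$ almost surely I would obtain from ergodicity of $\xi$ under translations, which renders $\sigma(\cH)$ a deterministic set, combined with the fact that the integrated density of states (equivalently the rotation number of the Riccati diffusion below) is strictly increasing on all of $\R$; alternatively one builds Weyl sequences at an arbitrary energy using that $\xi$ is unbounded on every interval.

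The heart of the matter is to show that $\cH$ has pure point spectrum, and I would reduce this to the two half-line operators $\cH^-$ and $\cH^+$ on $(-\infty,0]$ and $[0,\infty)$. Almost surely the operator lies in the limit-point case at $\pm\infty$, so each half-line operator $\cH^\pm_\theta$ (with boundary phase $\theta$ at the origin) is self-adjoint, and pure point spectrum of $\cH$ follows once it is established for both half-line families. On each half-line I would work with the transfer-matrix cocycle of $-y''+\xi y=\lambda y$ and its Riccati form $dz=(-z^2-\lambda)\,dt+dB$. Positivity of $\gamma_\lambda$ together with Oseledec's multiplicative ergodic theorem then yields, for each fixed $\lambda$ and almost every realization, an exponential dichotomy: a one-dimensional space of solutions decaying like $e^{-\gamma_\lambda t}$ and a complementary direction growing at rate $\gamma_\lambda$.

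The decisive step is to convert this fixed-energy dichotomy into a spectral statement through the Simon--Wolff criterion. For Lebesgue-a.e.\ $\lambda$ and almost every realization there is an exponentially decaying, hence $L^2$, solution at $+\infty$; a Fubini interchange turns this into the statement that, almost surely, for Lebesgue-a.e.\ $\lambda$ an $L^2$ solution exists, which is precisely the Simon--Wolff hypothesis. This gives pure point spectrum of $\cH^+_\theta$ for Lebesgue-a.e.\ boundary phase $\theta$, and ergodicity of the model promotes the almost-every-$\theta$ conclusion to the operator at hand; the same reasoning applies at $-\infty$, and assembling the two half-lines shows that $\cH$ is almost surely pure point. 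Care is needed at the interchange and at the passage from almost-every $\theta$ to the specific operator, but both are standard once the ergodic framework is in place.

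It remains to pin the decay rate of every eigenfunction to $\gamma_\lambda$, and this is where I expect the real difficulty. Oseledec's theorem already identifies the decay rate of the minimal solution as exactly $\gamma_\lambda$ for Lebesgue-a.e.\ $\lambda$, but the eigenvalues form a random countable set of Lebesgue measure zero, so that statement does not cover them. The clean way to reach every eigenvalue is an ergodic theorem for the Riccati diffusion applied to the eigenfunction's own logarithmic derivative $z_t=\varphi_\lambda'(t)/\varphi_\lambda(t)$: writing $t^{-1}\ln|\varphi_\lambda(t)|=t^{-1}\int_0^t z_s\,ds$ and identifying $z$ with the stationary (decaying) branch of the diffusion, the time average converges to $-\gamma_\lambda$, and likewise at $-\infty$, which yields the stated limit for the full vector $(\varphi_\lambda,\varphi_\lambda')$. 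The obstacle is to justify that the eigenfunction selects the stationary branch and that the ergodic average converges along this specific trajectory \emph{simultaneously for all} eigenvalues; this requires uniform-in-energy control of the diffusion (via continuity of $\lambda\mapsto\gamma_\lambda$ and uniform, Craig--Simon type, upper bounds on the cocycle), and it is the explicit and well-behaved structure of the Riccati diffusion for the white-noise potential that ultimately makes these uniform estimates tractable.
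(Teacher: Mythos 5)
Your proposal follows the same philosophy as the paper's second proof (the Kotani--Simon route of Section \ref{Sec:Kotani}): positive Lyapunov exponent, Oseledec dichotomy at each fixed energy, then a spectral-averaging device to beat the $\lambda$-dependence of the exceptional null sets. But the decisive step --- converting the fixed-energy dichotomy into a spectral statement --- contains a genuine gap. Simon--Wolff averaging over the boundary phase yields pure point spectrum of $\cH^\pm_\theta$ only for Lebesgue-a.e.\ $\theta$, and there is no principle by which ``ergodicity of the model promotes the almost-every-$\theta$ conclusion to the operator at hand''. Ergodicity of $\xi$ acts by translating the potential and does not move the boundary phase at all; moreover, in rank-one perturbation theory exceptional boundary conditions carrying singular continuous spectrum genuinely occur --- that is precisely why the Simon--Wolff conclusion is only a.e.\ in $\theta$ --- so no soft argument can close this step. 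The paper's resolution is different in a way that matters: one averages the spectral measure over the randomness of $\xi$ on a finite window, $w(d\lambda) = \E\big[\sigma(d\lambda)\mid \cF_{\R\backslash[0,t_0]}\big]$, proves that $w$ is absolutely continuous, and then uses the crucial observation that the Oseledec events $A^\pm_\gl$ are measurable with respect to $\cF_{\R\backslash[0,t_0]}$ (they coincide, up to null sets, with the Furstenberg convergence event, which depends only on the noise near $\pm\infty$). Hence the Lebesgue-full set $\cL$ on which the dichotomy holds satisfies $\sigma(\R\backslash\cL)=0$ almost surely, and no boundary phase ever enters the argument. Your scheme has no substitute for this measurability step.

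Two further points. First, to conclude that $\sigma$-a.e.\ $\lambda$ admits a decaying, hence $L^2$, solution, the Kotani--Simon route needs a priori subexponential growth bounds on generalized eigenfunctions (Proposition \ref{Prop:GEF}); your Simon--Wolff variant trades this input for boundary-phase averaging, which is exactly what creates the fatal step above, and if you repair the argument along the paper's lines you inherit this requirement --- which for a distribution-valued potential is genuinely hard (one cannot multiply $\xi$ against arbitrary $L^2_{\mathrm{loc}}$ functions, so the standard proof fails), and is the reason the paper builds the whole PAM/semigroup machinery of Section \ref{Sec:PAM}. Note also that gluing two half-line operators back into $\cH$ is only a finite-rank resolvent relation, which does not preserve pure point spectrum, so that reduction needs justification too. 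Second, your separate Riccati-ergodic argument for the exact rate $\gamma_\gl$ at every eigenvalue is asserted rather than proved (the uniform-in-energy control is never exhibited), and it is also unnecessary once the framework is right: since the pure-point proof gives $\sigma(\R\backslash\cL)=0$, every eigenvalue lies in $\cL$, where the dichotomy already forces each $L^2$ eigenfunction to be the decaying Oseledec solution, with rate exactly $\gamma_\gl$ at both infinities. (The paper's first proof obtains both pure-pointness and the two-sided rate quantitatively from the single moment bound of Proposition \ref{Prop:Expo}, integrated against $\sigma$ itself.)
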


\bigskip

Our result establishes what is usually referred to as an \emph{Anderson localization} phenomenon: the spectral measure is pure point  and the associated eigenfunctions decay exponentially fast. Let us give a very brief overview of the literature on this topic. This phenomenon was originally predicted by the physicist Anderson~\cite{Anderson58} in 1958: he modelled the Hamiltonian of a quantum particle evolving in a crystal by the discrete Schr\"odinger operator $-\Delta + V$ on $\Z^d$ where $V=(V_k, k\in\Z^d)$ is a collection of i.i.d.~r.v.~representing the disorder induced by impurities or defects in the crystal. Anderson argued that, provided the random disorder $V$ is strong enough, the bottom of the spectrum of the Hamiltonian is no longer absolutely continuous, as for $-\Delta$, but rather pure point with exponentially localized eigenfunctions. The phenomenon can also be spelled out in the continuous setting, in which case $\Z^d$ becomes $\R^d$, $\Delta$ is the continuous Laplacian and $V$ is typically taken to be a random function, which is stationary and ergodic.

The first rigorous proof of Anderson localization was given in the continuous setting in dimension $1$ by Goldscheid, Molchanov and Pastur~\cite{GMP}, and subsequently many results~\cite{CarmonaDuke,kunz1980,FS,KotaniSimon,AM} were established both in discrete and continuous settings and for arbitrary dimensions. Let us summarize the main results: in dimension $d=1$, Anderson localization holds in the whole spectrum; in dimension $d \geq 2$, for a large enough disorder or at a low enough energy, Anderson localization holds. We refer to~\cite{Carmona,Kirsch} for more details and the main conjectures in this field.\\

Let us now comment on the disorder that we consider in the present work. Gaussian white noise arises as the scaling limit of fields of i.i.d.~r.v.~with finite variance: consequently, this is the natural continuum counterpart of the potentials considered in the discrete setting.\\
However, white noise is quite irregular as it is only \emph{distribution} valued: for instance, in dimension $1$, it can be obtained as the derivative in the sense of distributions of a Brownian motion. To the best of our knowledge, distribution valued potentials have never been considered in the Anderson localization literature and our setting is not covered by the aforementioned general results. As we will explain below, even the definition of the operator is unclear due to the irregularity of white noise. On the other hand, white noise is very convenient mathematically as it is totally uncorrelated and this allows for powerful tools from stochastic analysis: this is the reason why very detailed information can be obtained on this model.

\subsection*{Construction of the operator}

Heuristically, $\xi$ is irregular ``everywhere'' and this makes the identification of the domain of the operator non-trivial: whatever smoothness is imposed on $f$, the term $f\xi$ is distribution valued (except in the trivial case $f\equiv 0$) and does not belong to $L^2$. Consequently, the domain of $\cH$ cannot contain smooth functions; instead it should be made of functions $f$ which are such that $-f''$ is itself distribution valued but ``compensates'' for $f\xi$ in such a way that the sum $-f'' + f\xi$ lies in $L^2$.\\

In finite volume, one can avoid identifying precisely the domain by considering the Dirichlet form associated to $\cH_L$. The Dirichlet form requires \emph{less} regularity, and in the present case, it was shown by Fukushima and Nakao~\cite{Fukushima} that it is closed on the \emph{deterministic space} $H^1_0$, bounded below, and that it gives rise to a self-adjoint operator with pure point spectrum (the resolvents are compact). In infinite volume however, due to the ``unboundedness'' of white noise, the Dirichlet form is no longer bounded from below and does not allow to construct the operator anymore.\\

In the present work, we provide a detailed construction of the operator in finite and infinite volume through the theory of \emph{generalized Sturm-Liouville operators} developed by Weidmann~\cite{Weidmann}. The latter is an extension of the classical theory of Sturm-Liouville, that applies to operators of the form $-f'' + fV$ with $V$ in $L^1_{\mbox{\tiny{loc}}}$, to a setting that encompasses more singular potentials $V$. Actually, in this framework it implements the heuristic idea outlined above on the local behavior of elements of the domain, see Section \ref{Sec:FullSpace}. This leads to the following result:
\begin{theorem}\label{Th:Construction}
	Almost surely the operator $\cH$ is self-adjoint on (a dense subset of) $L^2(\R)$ and is the limit as $L\to\infty$, in the strong resolvent sense, of $\cH_L$.
\end{theorem}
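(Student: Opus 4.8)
The plan is to realize $\cH$ within Weidmann's theory of generalized Sturm--Liouville operators \cite{Weidmann} and to deduce both assertions from the Weyl limit-point/limit-circle classification at the two singular endpoints $\pm\infty$. Concretely, I would (i) rewrite the formal eigenvalue equation $-f''+\xi f=\lambda f$ as a first-order linear system with \emph{continuous} coefficients, thereby placing the problem in the regular-coefficient Sturm--Liouville framework on every finite interval; (ii) show that both endpoints are in the limit-point case, which forces the maximal operator on $\R$ to be self-adjoint (equivalently, the minimal operator to be essentially self-adjoint); and (iii) obtain the strong resolvent convergence $\cH_L\to\cH$ from the general approximation theorem for Sturm--Liouville operators on exhausting intervals, whose hypotheses reduce precisely to the limit-point property established in (ii).

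For step (i), let $B$ be the two-sided Brownian motion with $B'=\xi$ and introduce the quasi-derivative $f^{[1]}:=f'-Bf$. A direct computation shows that $-f''+\xi f=\lambda f$ is equivalent, in the integrated (weak) sense, to
\begin{equation*}
\frac{d}{dt}\bin{f}{f^{[1]}}=\mat{B}{1}{-(B^2+\lambda)}{-B}\bin{f}{f^{[1]}},
\end{equation*}
whose coefficient matrix is continuous in $t$ because $B$ is. Hence for every $\lambda\in\C$ the solution space is two-dimensional and every solution is absolutely continuous together with its quasi-derivative. This identifies the maximal domain $\{f\in L^2(\R):\ f,f^{[1]}\in AC_{\mathrm{loc}},\ -f''+\xi f\in L^2(\R)\}$ and the associated minimal operator, and makes each $\cH_L$ a regular, self-adjoint Sturm--Liouville operator under Dirichlet boundary conditions.

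For step (ii), I would invoke the positivity of the Lyapunov exponent. Since $\gamma_\lambda>0$ by the explicit expression for $\gamma_\lambda$ above, the solutions of $-f''+\xi f=\lambda f$ split, near $+\infty$, into a one-dimensional family decaying like $e^{-\gamma_\lambda t}$ and complementary solutions growing like $e^{+\gamma_\lambda t}$; the latter are not square-integrable near $+\infty$, and symmetrically at $-\infty$. By the Weyl alternative, the existence of a single non-$L^2$ solution near an endpoint rules out the limit-circle case, so both $\pm\infty$ are limit-point. Weidmann's theory then yields self-adjointness of $\cH$ on its (dense) maximal domain, with no boundary condition imposed at infinity.

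For step (iii), embedding $L^2(-L/2,L/2)$ into $L^2(\R)$ by extension by zero, it suffices to show $(\cH_L-z)^{-1}f\to(\cH-z)^{-1}f$ in $L^2(\R)$ for a fixed $z\in\C\setminus\R$ and all compactly supported $f$. I would use the explicit Green's kernel $G_z(s,t)=W^{-1}\,\psi_-(s\wedge t)\,\psi_+(s\vee t)$ built from the Weyl solutions $\psi_\pm$ that are square-integrable at $\mp\infty$; for $\cH_L$ the analogous kernel is built from the Dirichlet solutions $\psi_{\pm,L}$ vanishing at $\mp L/2$. In the limit-point case the finite-interval solutions $\psi_{\pm,L}$ converge, locally uniformly, to the genuine Weyl solutions $\psi_\pm$ as $L\to\infty$ — this is exactly the construction of the Weyl--Titchmarsh $m$-function — so the kernels, and hence the resolvents, converge, giving Theorem~\ref{Th:Construction}. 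The main obstacle is the quantitative control underlying (iii): one must establish $\psi_{\pm,L}\to\psi_\pm$ together with the integrability of $G_z$ uniformly in $L$, which is where the exponential decay governed by $\gamma_\lambda$ is essential. The distributional nature of $\xi$ enters only through the continuity of $B$ and is harmless once the quasi-derivative formulation of step (i) is in place.
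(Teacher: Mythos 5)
Your steps (i) and (ii) follow the paper's construction closely: the quasi-derivative system with coefficient matrix $\begin{pmatrix} B & 1\\ -B^2-z & -B\end{pmatrix}$ is exactly the paper's Lemma \ref{Lemma:ODEz}, and self-adjointness of the maximal operator under the limit-point property at $\pm\infty$ is Weidmann's theory as implemented in Propositions \ref{Prop:MinMax} and \ref{Prop:H}. Two caveats, though. First, the density of the (maximal or compactly supported) domain in $L^2(\R)$ is part of the theorem and is \emph{not} automatic here: because of the distorted derivative $f'-Bf$, the paper needs a genuine approximation argument (Lemma \ref{Lemma:Density}). Second, and more seriously, your step (ii) has a gap: Furstenberg/Oseledec control the Pr\"ufer modulus $r_\gl=(y_\gl^2+(y_\gl')^2)^{1/2}$, not $|y_\gl|$ itself, while limit point requires a solution with $\int y_\gl^2=\infty$. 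Exponential growth of $r_\gl$ does not by itself exclude that, at the times where the phase $\theta_\gl$ passes near multiples of $\pi$, the mass sits entirely on $y_\gl'$. The paper closes exactly this gap with a phase-oscillation argument: it introduces the successive times $T_k$ at which $\theta_\gl$ hits $(2k+1)\pi/2$ and the i.i.d.\ durations $S_k-T_k$ during which $\sin^2\theta_\gl\ge 1/2$, so that $\int y_\gl^2 \ge \tfrac{C^2}{2}\sum_k (S_k-T_k)e^{2(\gamma_\gl-\eps)T_k}=\infty$. Some such argument is needed to pass from ``$r_\gl$ grows like $e^{\gamma_\gl t}$'' to ``$y_\gl$ is not square-integrable''.

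For step (iii) you take a genuinely different route from the paper. You propose convergence of the Green's kernels built from Weyl solutions; the paper mentions precisely this possibility and then deliberately avoids it in favor of a much shorter argument due to Weidmann: for $f\in\cD^c_\R$ (compactly supported, in the maximal domain), as soon as $L$ exceeds the support size one has $\cH_L f=\cH f$, hence with $g:=(\cH-z)f$ the resolvents agree \emph{exactly}, $(\cH_L-z)^{-1}g=(\cH-z)^{-1}g$; since $\cD^c_\R$ is a core, $(\cH-z)\cD^c_\R$ is dense in $L^2(\R)$, and the uniform bound $1/|\Im(z)|$ on all resolvents upgrades this to strong resolvent convergence with no kernel estimates whatsoever. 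Your route is viable --- the paper in fact carries out the required ingredients later, in Subsection \ref{Subsec:Stieltjes}, for the convergence of the matrix spectral measures --- but the quantitative input is not the one you identify: for $z\in\C\backslash\R$ the $L^2$ convergence of the Dirichlet solutions to the Weyl solutions follows from the fact that $m_z(L/2,0)$ and $m_z(+\infty)$ lie in a disk of radius $\big(2\vert\Im(z)\vert\int_0^{L/2}\vert y^D\vert^2\,dt\big)^{-1}$, which vanishes as $L\to\infty$ in the limit-point case; this gives $\int_0^{L/2}\vert y^{L/2}-y^+\vert^2\,dt\to 0$ with no reference to the Lyapunov exponent, which is in any case defined for real $\lambda$ whereas resolvent convergence is needed at non-real $z$.
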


Let us mention that in higher dimension, the operator $-\Delta + \xi$ is \emph{singular}: white noise is too irregular for the product $f\xi$ to make sense as soon as $\Delta f$ behave like $\xi$ at small scales. In dimensions $2$ and $3$, based on recent theories of stochastic PDEs~\cite{Max,Hairer2014}, a renormalization procedure allows to construct the operator and it formally writes $-\Delta + \xi + \infty$, see~\cite{AllezChouk, GUZ, Lab19, CvZ, Mouzard, BDM}.

\subsection*{Some comments on the proof of Anderson localization}

We present two proofs of Theorem \ref{Th:Expo}. The first proof relies on exponential decay estimates on the eigenfunctions of the finite volume operator $\cH_L$ that we establish uniformly over $L$ and that we combine with the vague convergence of the finite volume spectral measure to the infinite volume one. This strategy of proof is inspired by an article of Carmona~\cite{CarmonaDuke} on continuous Schr\"odinger operators in dimension $1$ (with a regular potential): let us mention that we prove that the Lyapunov exponent is not only an upper bound (as in Carmona's result) but actually the precise rate of exponential decay of the eigenfunctions. This first proof of Theorem \ref{Th:Expo} is presented in full detail.\\
We also present the main lines of an alternative proof which is based on a celebrated article of Kotani and Simon~\cite{KotaniSimon}. This approach is somehow more robust, but relies on ergodic theorems for products of $2\times 2$ random matrices and a priori growth estimates on generalized eigenfunctions. These intermediate results were not required in the first approach, and it turns out that the a priori growth estimates, that we establish in the present article, are far from being trivial. The ``usual'' proof of these estimates does not apply in our case due to the singularity of white noise. Instead, our proof exploits the relationship between the Hamiltonian $\cH$ and the evolution equation referred to as the parabolic Anderson model (PAM). 

\subsection*{Connection with the parabolic Anderson model} We establish rigorously the connection between $\cH$ and the PAM: it is more delicate than one may think, not only because white noise is singular, but also because the operator $\cH$ is \emph{not} bounded from below. Besides the growth estimates on the generalized eigenfunctions, we also derive that the spectrum of $\cH$ equals $\R$.

\subsection*{Stability} Throughout the article, we will discuss the stability of the results and proofs under a perturbation of the noise. We will focus on the case where the perturbation is actually a regularisation $\xi_\varepsilon$ of the white noise $\xi$ obtained by convolving $\xi$ with a smooth approximation of the Dirac delta that lives at scale $\varepsilon$. This is a standard procedure in stochastic PDE. This perturbed noise gives rise to the operator
$$ \cH^{(\varepsilon)} := -\partial_x^2 + \xi_\varepsilon\;,\quad \mbox{ on }\R\;.$$
We will see that the construction of the operator with the theory of generalized Sturm-Liouville operators is quite robust, and applies almost verbatim to $\xi_\varepsilon$. On the other hand, the first proof of Anderson localization that we present cannot easily be adapted to cover this case: the reason being that this proof relies on decay estimates that are not easy to establish in the non-Markovian setting induced by $\xi_\varepsilon$. The second proof of Anderson localization is more robust and applies to $\xi_\varepsilon$: however, it requires the positivity of the Lyapunov exponent, which is no longer explicit in this case, but this positivity is granted by a non-trivial result due to Kotani.

\subsection*{A general comment on the article} A complete proof of Anderson localization requires the combination of many different ingredients. There exist a few monographs on this topic, mostly in the discrete setting though, but they do not cover singular potential such as white noise.\\
We decided to take advantage of this article to give a detailed, and rather self-contained, presentation of many different aspects of the Schr\"odinger operator with white noise potential and a complete proof of Anderson localization. Some intermediate arguments are of course standard, but others are new: we avoided as much as possible to refer the reader to the literature and instead we tried to present all details.\\
We hope this article will be accessible to readers that are not experts of the spectral theory of self-adjoint operators nor of Anderson localization.

\subsection*{Organization of the article} In Section 2, we collect a few preliminary facts on the eigenproblem formally associated to the operator: the discussion concerns solutions to ODE and SDE only. In Section \ref{Sec:FullSpace}, we apply the theory of generalized Sturm-Liouville operators to define $\cH_L$ and $\cH$, and to prove Theorem \ref{Th:Construction}. In Section \ref{Sec:Expo}, we construct a tractable spectral measure for the operator $\cH$ and we prove the localization result of Theorem \ref{Th:Expo}. Section \ref{Sec:PAM} is devoted to the connection between the operator $\cH$ and the parabolic Anderson model: it can be read essentially independently from Section \ref{Sec:Expo}. The connection with the PAM will allow us to deduce that the spectrum of $\cH$ equals $\R$, thus completing the proof of Theorem \ref{Th:Expo} and will also allow us to prove a priori growth estimates on the generalized eigenfunctions, useful for the next section. Finally in Section \ref{Sec:Kotani}, we present the main steps of a second proof of Theorem \ref{Th:Expo} following the approach of Kotani and Simon. Each section from \ref{Sec:FullSpace} to \ref{Sec:Kotani} ends on a comment regarding the case where $\xi$ is replaced by $\xi_\varepsilon$.

\section{ODE, diffusions and Lyapunov exponent}

In this section, we collect some preliminary results on the solutions of the ODE
\begin{equation}\label{Eq:ODEz}
	-y'' + y\xi = z y\;,
\end{equation}
parametrized by $z \in \C$. To emphasize the dependence on $z$, we will sometimes write $y_z$. Although the rigorous construction of the operators $\cH_L$ and $\cH$ will be carried out only in the next section, we can already anticipate that this ODE, in the specific case where $z=\lambda \in\R$, is nothing but the eigenproblem associated to these operators.

Since $\xi$ is distribution-valued, $y''$ cannot be a function and \eqref{Eq:ODEz} has to be understood in the distribution sense. Nevertheless, for any given $z\in\C$, the set of solutions is a two-dimensional vector space: up to fixing two degrees of freedom, for instance $y(0)$ and $y'(0)$, there is a unique absolutely continuous solution to \eqref{Eq:ODEz}. We refer to Lemma \ref{Lemma:ODEz} for a precise (and more general) statement.

\subsection{Pr\"ufer coordinates and diffusions}\label{Subsec:Prufer}

Let us concentrate on the case where $z=\lambda \in \R$. It turns out that the evolution of the complex function $y_\lambda' + i y_\lambda$ has some non-trivial properties. It is convenient to work in polar coordinates, also called Pr\"ufer variables:
$$ y_\lambda' + i y_\lambda = r_\gl e^{i \theta_\gl} \;.$$
The process $\theta_\gl$ is called the \emph{phase} of $y_\gl$, and $r_\gl$ its \emph{modulus}. It is sometimes useful to deal with
$$ \rho_\gl(t) := \ln r_\gl^2(t)\;.$$
In these new coordinates, and writing $dB = \xi$, we have the following coupled stochastic differential equations:
\begin{align}
	d\theta_\gl(t) &= \big(1 + (\gl-1) \sin^2 \theta_\gl + \sin^3\theta_\gl \cos\theta_\gl\big) dt - \sin^2 \theta_\gl dB(t)\;, \label{eq_theta}\\
	d\rho_\gl(t) &= \big(-(\gl-1) \sin 2\theta_\gl - \frac12 \sin^2 2\theta_\gl + \sin^2 \theta_\gl \big)dt+ \sin 2\theta_\gl dB(t)\;.\label{EDSlnr}
\end{align}
The two degrees of freedom $y_\gl(0)$ and $y_\gl'(0)$ become $\theta_\gl(0)$ and $r_\gl(0)$ (or equivalently $\rho_\gl(0)$). Note that w.r.t.~the solution $y_\gl$ of \eqref{Eq:ODEz}, $r_\gl(0)$ is only a dilation coefficient, while $\theta_\gl(0)$ has a non-trivial impact on the solution. In the sequel, we will always take $\theta_\gl(0) = \theta_0 \in [0,\pi)$ and, most often, $r_\gl(0) = 1$ (or equivalently $\rho_\gl(0)=0$).
We also denote $\rho_{\gl,\theta_0}$ the solution of \eqref{EDSlnr} with initial conditions $\rho_\gl(0) =0$ and $\theta_\gl(0) = \theta_0$ and $r_{\gl,\theta_0}$ the corresponding modulus.\\

It can be checked that the stochastic process $\rho_\gl$ is completely determined by the knowledge of the trajectory of $\theta_\gl$. In addition, the phase satisfies the so-called Sturm-Liouville oscillation property: the process $t\mapsto \lfloor \theta_\gl(t)/\pi \rfloor$ is non-decreasing. In other words, the process $\theta_\gl(t)$ cannot cross downwards $0$, neither any $k\pi$ with $k\in \N$. Let us finally mention that the process $\theta_\gl$ mod$[\pi]$ remains Markovian and admits a unique invariant measure $\mu_\gl$, with an explicit density that we do not recall here.

\subsection{Lyapunov exponent and propagator}\label{Subsec:Lyapunov}

The growth of the modulus $r_\gl$ plays an important role in the spectral properties of the operator. This topic has a long history, that takes its roots in deep results on products of $2\times 2$ random matrices initiated by Furstenberg and Kesten. Due to its independence properties, the white noise potential ideally fits in this framework. In the first proof of Anderson localization that we will present in Section \ref{Sec:Expo}, we will not need the material below and we will rely instead on alternative estimates that were obtained in~\cite{DL_Crossover}. On the other hand, in the second proof of Anderson localization based on the Kotani-Simon approach, the material below will be crucial. In any case, we chose to present these facts in this preliminary section as we believe that they are quite helpful for understanding the exponential decay of the eigenfunctions.\\

For $\lambda \in \R$, consider the \emph{Neumann} and \emph{Dirichlet} solutions $y^{N}_\gl$ and $y^{D}_\gl$ of \eqref{Eq:ODEz}, that is, the solutions that satisfy
$$ y^N_\lambda(0) = 1\;,\quad (y^N_\lambda)'(0) = 0\;,\quad  y^D_\lambda(0) = 0\;,\quad (y^D_\lambda)'(0) = 1\;.$$
These two solutions span the two-dimensional vector space of solutions of \eqref{Eq:ODEz}. Introduce the propagator
 $$ U_\gl(t) := \begin{pmatrix} y_\gl^N(t) & y_\gl^D(t)\\ (y_\gl^N)'(t) & (y_\gl^D)'(t) \end{pmatrix}\;,\quad t\ge 0\;,$$
 At any time $t\ge 0$, this is an invertible matrix of determinant $1$ (this is because the Wronskian of $y^N_\lambda$ and $y^D_\lambda$ is constant, see Subsection \ref{Subsec:MinMax}). The process satisfies
 $$ U_\gl(t) = U_\gl(s,t) U_\gl(s)\;,\quad 0\le s \le t\;,$$
 where $U_\gl(s,t)$ is the propagator defined with the Neumann and Dirichlet solutions starting at time $s$. Note that $U_\gl(s,t)$ is independent of $U_\gl(s)$ and has the same law as $U_\gl(t-s)$. Restricting for convenience to integer times, this implies that $U_\gl(n)$ is the product of $n$ i.i.d.~$2\times 2$ random matrices. Furstenberg Theorem~\cite[Th 4.1 on p.30]{BougerolLacroix} ensures that almost surely
 $$ \frac1{n} \ln \| U_\gl(n)\| \to \gamma_\gl\;,\quad n\to \infty\;,$$
 where $\gamma_\gl$ is explicitly given by the so-called Furstenberg formula:
 $$ \gamma_\gl = \int_{\theta \in [0,\pi)}  \E[\ln r_{\gl,\theta}(1)] \mu_\gl(\theta) d\theta\;.$$

 \begin{lemma}
 	For any $\gl\in\R$, we have
 	$$ \gamma_\gl = \frac{\int_0^\infty {\sqrt v} \exp(-2\lambda v - \frac{v^3}{6}) dv}{2\int_0^\infty \frac1{\sqrt v} \exp(-2\lambda v - \frac{v^3}{6}) dv}\;.$$
 \end{lemma}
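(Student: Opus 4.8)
The plan is to evaluate the Furstenberg formula
$$\gamma_\gl = \int_{[0,\pi)} \E[\ln r_{\gl,\theta}(1)]\,\mu_\gl(\theta)\,d\theta$$
by first rewriting it as a stationary average of a \emph{bounded} drift, and then computing that average explicitly after passing to a Riccati variable. The first step is the reduction to the stationary mean of the instantaneous growth rate. Since $\ln r_\gl = \tfrac12\rho_\gl$ and the martingale part of \eqref{EDSlnr} has zero expectation,
$$\E[\ln r_{\gl,\theta}(1)] = \tfrac12\,\E\Big[\int_0^1 b(\theta_\gl(s))\,ds\Big],\qquad b(\theta):=-(\gl-1)\sin 2\theta-\tfrac12\sin^2 2\theta+\sin^2\theta,$$
with $\theta_\gl$ started from $\theta$. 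Averaging over $\mu_\gl$ and using that $\mu_\gl$ is invariant for $\theta_\gl \bmod \pi$ collapses the time integral, yielding
$$\gamma_\gl = \tfrac12\int_0^\pi b(\theta)\,\mu_\gl(\theta)\,d\theta.$$
The point of this form is that $b$ is bounded on the compact phase space $[0,\pi)$, so the average is unambiguous.

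Next I would pass to the Riccati variable $p=\cot\theta_\gl = y_\gl'/y_\gl$. A direct It\^o computation turns \eqref{eq_theta} into the autonomous diffusion $dp = -(p^2+\gl)\,dt + dB$, whose invariant probability density, obtained by solving the stationary Fokker--Planck equation with a constant probability current $J>0$, is
$$\nu_\gl(p) = 2J\, e^{-W(p)}\int_{-\infty}^p e^{W(q)}\,dq,\qquad W(p):=\tfrac23 p^3+2\gl p,$$
and $\mu_\gl$ is its image under $\theta\mapsto\cot\theta$. One checks, consistently with the reduction above, that $\gamma_\gl$ equals the principal-value mean $\mathrm{PV}\!\int_\R p\,\nu_\gl(p)\,dp$; equivalently one keeps the absolutely convergent integral $\tfrac12\int_\R b(\theta)\,\nu_\gl(p)\,dp$ with $\theta=\operatorname{arccot} p$, in which $b$ becomes a bounded rational function of $p$.

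The computation is then short. Substituting $q=p-v$ and completing the square in $p$ via
$$W(p-v)-W(p) = -2v\big(p-\tfrac v2\big)^2-\tfrac16 v^3-2\gl v$$
gives the representation
$$\nu_\gl(p) = 2J\int_0^\infty e^{-2v(p-v/2)^2}\,e^{-\frac{v^3}{6}-2\gl v}\,dv.$$
Now the Gaussian integrals $\int_\R e^{-2v(p-v/2)^2}\,dp=\sqrt{\pi/(2v)}$ and $\int_\R p\,e^{-2v(p-v/2)^2}\,dp=\tfrac v2\sqrt{\pi/(2v)}$ do the rest: the normalization $\int\nu_\gl=1$ fixes $2J\sqrt{\pi/2}=\big(\int_0^\infty v^{-1/2}e^{-2\gl v-v^3/6}\,dv\big)^{-1}$, which is the denominator, while the mean produces the numerator $\tfrac12\int_0^\infty v^{1/2}e^{-2\gl v-v^3/6}\,dv$, the extra factor $v/2$ coming from the Gaussian being centered at $v/2$. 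Taking the ratio yields the stated expression for $\gamma_\gl$.

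The main obstacle is the justification of the reduction, not the algebra. The instantaneous growth rate equals $p$ in the Riccati variable, and $p$ is \emph{not} integrable against $\nu_\gl$: the tails decay only like $1/p^2$, so $p\,\nu_\gl\sim 1/p$, and $\int p\,\nu_\gl$ exists merely as a principal value, the finite answer emerging because the $1/p$ tails at $\pm\infty$ cancel. I would circumvent this either by working throughout with the bounded drift $b$ on $[0,\pi)$, where no such issue arises, or by carrying out the final integration in the $(p,v)$ representation, where for each fixed $v>0$ the inner $p$-integral is a convergent Gaussian and the only thing to control is the interchange of the $p$- and $v$-integrations. Verifying the constant-flux form of $\nu_\gl$ and this interchange are the sole delicate points; everything else is the Gaussian computation above.
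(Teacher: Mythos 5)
Your proposal is correct in substance, and its first half coincides exactly with the paper's proof: the reduction
$$ \gamma_\gl = \tfrac12\int_0^\pi b(\theta)\,\mu_\gl(\theta)\,d\theta\;,\qquad b(\theta)=-(\gl-1)\sin 2\theta-\tfrac12\sin^2 2\theta+\sin^2\theta\;,$$
obtained by killing the martingale part of \eqref{EDSlnr} and invoking invariance of $\mu_\gl$, is precisely the paper's argument. The difference is in the second half: the paper stops there and cites~\cite[Appendix A.4.2]{DL_Crossover} for the evaluation, whereas you carry it out, via the Riccati variable $p=\cot\theta_\gl=y_\gl'/y_\gl$ (note the slip: it is $\nu_\gl$ that is the image of $\mu_\gl$ under $\theta\mapsto\cot\theta$, not the reverse), the constant-flux stationary density $\nu_\gl(p)=2J\,e^{-W(p)}\int_{-\infty}^p e^{W(q)}dq$, and the Gaussian representation coming from $W(p-v)-W(p)=-2v(p-v/2)^2-\tfrac16v^3-2\gl v$. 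I checked this algebra: the Riccati drift, the completion of the square, both Gaussian integrals, and the resulting ratio are all correct; moreover your normalization identity shows that the flux $J$ equals the paper's integrated density of states $N(\gl)$, a nice consistency check that your route makes visible. What your approach buys is self-containedness and a transparent origin for the $v$-integrals appearing in the statement.

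The one genuine soft spot is the bridge you describe with ``one checks'': the identity $\tfrac12\int_0^\pi b\,d\mu_\gl=\mathrm{PV}\int_\R p\,\nu_\gl(p)\,dp$. Be aware that your first proposed workaround (``working throughout with the bounded drift $b$'') does not actually rescue the short computation: with $b(\operatorname{arccot}p)$ in place of $p$, the inner $p$-integrals against $e^{-2v(p-v/2)^2}$ are no longer elementary (they produce error-function type expressions), so the clean Gaussian calculus is only available in the principal-value formulation. You therefore must prove the PV identity. It is true, and the natural proof is implicit in your setup: $p-\tfrac12 b(\theta(p))$ is the drift of $\ln|\sin\theta_\gl(t)|=-\tfrac12\ln(1+p^2)$, the symmetric truncation $(\eps,\pi-\eps)$ in $\theta$ corresponds under $p=\cot\theta$ to the symmetric window $(-\cot\eps,\cot\eps)$ in $p$, and the $J/p$ tails of $p\,\nu_\gl(p)$ at $\pm\infty$ cancel under this truncation. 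Likewise the $p$--$v$ interchange must be performed on the truncated window first and the limit controlled by hand, since the double integral is not absolutely convergent: $\int_\R|p|\,e^{-2v(p-v/2)^2}dp$ carries a $(2v)^{-1}$ term that is not integrable near $v=0$. You flag these as the delicate points, which is accurate, but as written they are asserted rather than proved; filling them in is routine but real work, and it is presumably what the appendix cited by the paper does.
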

\begin{proof}
	From \eqref{EDSlnr}, we find
	$$ \E[\ln r_{\gl,\theta}(1)] = \frac12 \E\Big[\int_0^1 \big(-(\gl-1) \sin 2\theta_\gl(t) - \frac12 \sin^2 2\theta_\gl(t) + \sin^2 \theta_\gl(t) \big)dt\Big]\;.$$
	Integrating the previous expression against $\mu_\gl(\theta)d\theta$, which is the invariant law of the process $\theta_\gl$, we easily obtain
	$$ \gamma_\gl = \frac12 \int_{\theta\in [0,\pi)} \big(-(\gl-1) \sin 2\theta - \frac12 \sin^2 2\theta + \sin^2 \theta \big) \mu_\gl(\theta) d\theta\;.$$
	Then the computations in~\cite[Appendix A.4.2]{DL_Crossover} provide the desired expression.
\end{proof}

The extension of the above convergence from integer times to positive real times can be carried out in the following way. First note that we have not specified the norm on $2\times 2$ matrices that we work with: since they are all equivalent, we can take the maximum of the euclidean norms of its two column-vectors, namely $\| U_\gl(t) \| = \max(r^N_\gl(t), r^D_\gl(t))$. Second, for any $t\in [n,n+1]$
$$ \ln r^N_\gl(t) =  \ln r^N_\gl(n) +  \ln \frac{r^N_\gl(t)}{r^N_\gl(n)}\;.$$
Observe that $\frac{r^N_\gl(t)}{r^N_\gl(n)}$ is the euclidean norm at time $t$ of the solution of \eqref{Eq:ODEz} that starts at time $n$ with a phase equal to $\theta^N_\gl(n)$ and a modulus equal to $1$. If we set
$$X_n := \sup_{t\in [n,n+1]} \Big\vert \ln \frac{r^N_\gl(t)}{r^N_\gl(n)} \Big\vert\;,$$
then for any $n\ge 0$
$$ \E[X_n^2] \le \sup_{\theta_0\in [0,\pi)} \E\Big[\sup_{t\in [0,1]} \vert \ln r_{\gl,\theta_0}(t)\vert^2 \Big]\;.$$
From the SDE solved by $\ln r_{\gl,\theta_0}$, we can easily show that this last quantity is bounded. We thus deduce that for any $\varepsilon>0$
$$ \sum_{n\ge 1} \P(X_n / n > \varepsilon) < \infty\;,$$
which, by the Borel-Cantelli lemma, suffices to deduce that $X_n / n$ converges to $0$ almost surely. The same holds for the Dirichlet solution.\\
This almost sure convergence combined with the previous bounds allows to deduce that almost surely
$$ \lim_{t\to\infty} \frac1{t} \max(\ln r^N_\gl(t),\ln r^D_\gl(t)) = \lim_{n\to\infty} \frac1{n} \max(\ln r^N_\gl(n),\ln r^D_\gl(n))\;.$$
We have thus proved the following result:
\begin{lemma}[Furstenberg]
	For all $\gl \in \R$, almost surely
	\begin{equation}\label{Eq:Furstenberg}
		\frac1{t} \ln \| U_\gl(t)\| \to \gamma_\gl\;,\quad t\to \infty\;.
	\end{equation}
\end{lemma}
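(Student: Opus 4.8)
The plan is to upgrade the almost-sure convergence of Furstenberg's theorem, which the excerpt has already established along the integer sequence $n \to \infty$, to convergence along all real times $t \to \infty$. The discrete result gives $\frac1n \ln \|U_\gl(n)\| \to \gamma_\gl$ almost surely, and since $\|U_\gl(t)\| = \max(r^N_\gl(t), r^D_\gl(t))$, the real-time statement follows once I show that the fluctuation of $\ln r^N_\gl$ (and $\ln r^D_\gl$) over each unit interval $[n,n+1]$ is negligible compared to $n$.

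The key steps, in order, are as follows. First I would write $\ln r^N_\gl(t) = \ln r^N_\gl(n) + \ln\frac{r^N_\gl(t)}{r^N_\gl(n)}$ for $t \in [n,n+1]$, exactly as set up in the excerpt, and introduce $X_n := \sup_{t \in [n,n+1]} |\ln (r^N_\gl(t)/r^N_\gl(n))|$. The crucial observation is that, by the Markov property of the phase process and the scaling/stationarity built into the propagator, $r^N_\gl(t)/r^N_\gl(n)$ is the modulus of the solution of \eqref{Eq:ODEz} restarted at time $n$ from phase $\theta^N_\gl(n)$ and unit modulus; hence $\E[X_n^2] \le \sup_{\theta_0 \in [0,\pi)} \E[\sup_{t\in[0,1]} |\ln r_{\gl,\theta_0}(t)|^2]$, uniformly in $n$. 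Second, I would bound this last supremum using the SDE \eqref{EDSlnr}: splitting $\ln r_{\gl,\theta_0}(t)$ into its drift part and its martingale part, the drift is bounded pathwise by a deterministic constant since the coefficients $-(\gl-1)\sin 2\theta - \frac12\sin^2 2\theta + \sin^2\theta$ are bounded, while for the martingale part $\int_0^t \sin 2\theta_\gl \, dB$ I would apply Doob's $L^2$ maximal inequality, whose bracket $\int_0^t \sin^2 2\theta_\gl \,ds \le 1$ is again uniformly bounded. This yields a finite bound $C$ on $\E[X_n^2]$ independent of $n$.

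Third, with $\sup_n \E[X_n^2] \le C$ in hand, Chebyshev gives $\P(X_n/n > \eps) \le C/(\eps^2 n^2)$, so $\sum_{n\ge 1} \P(X_n/n > \eps) < \infty$ for every $\eps > 0$; Borel--Cantelli then forces $X_n/n \to 0$ almost surely. Fourth, combining this with the discrete convergence, for $t \in [n,n+1]$ I have $\big|\frac1t \ln r^N_\gl(t) - \frac1n \ln r^N_\gl(n)\big|$ controlled by $X_n/n$ plus a term coming from the discrepancy between $\frac1t$ and $\frac1n$ (of order $1/n$ times $\frac1n\ln r^N_\gl(n)$, which is $O(1/n)$), whence $\frac1t \ln r^N_\gl(t) \to \gamma_\gl$; the same argument applies verbatim to $r^D_\gl$, and taking the maximum gives \eqref{Eq:Furstenberg}.

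The main obstacle is the second step: verifying cleanly that $\sup_{\theta_0} \E[\sup_{t\in[0,1]} |\ln r_{\gl,\theta_0}(t)|^2]$ is finite and, importantly, uniform in the initial phase $\theta_0$. This uniformity is what lets the single constant $C$ control every $X_n$ regardless of the (random) phase $\theta^N_\gl(n)$ reached at the restart time. The estimate itself is routine once one invokes Doob's inequality, but one must be careful that the bound genuinely does not degenerate as $\theta_0$ ranges over $[0,\pi)$ — which it does not, precisely because both the drift and diffusion coefficients in \eqref{EDSlnr} are bounded functions of $\theta_\gl$ alone. Everything else is bookkeeping: the Borel--Cantelli step is standard, and the passage from the integer grid to real times is the elementary interpolation just described.
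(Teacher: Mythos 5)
Your proposal is correct and follows essentially the same route as the paper: the same decomposition $\ln r^N_\gl(t) = \ln r^N_\gl(n) + \ln\frac{r^N_\gl(t)}{r^N_\gl(n)}$, the same variable $X_n$ with a uniform-in-$\theta_0$ second-moment bound derived from the SDE \eqref{EDSlnr}, then Chebyshev, Borel--Cantelli, and interpolation between integer times. The only difference is that you spell out the drift/martingale splitting and Doob's $L^2$ inequality where the paper simply says the moment bound "can easily" be shown from the SDE, so your write-up fills in that routine step explicitly.
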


So far, we have shown that $\sup_{\theta_0}r_{\gl,\theta_0}(t)$ grows exponentially fast, but we do not know whether each $r_\gl(t,\theta_0)$ has such a behavior. Oseledec Theorem~\cite[Prop 1.1 on p.188-189]{BougerolLacroix} (which is a deterministic result) asserts that, on the event of probability one where \eqref{Eq:Furstenberg} holds, there exists an angle $\alpha_\gl \in [0,\pi)$ such that
$$ \frac{\ln r_{\gl,\alpha_\gl}(t)}{t} \to -\gamma_\gl\;,\quad t\to  +\infty\;,$$
and such that for all $\theta_0\in  [0,\pi)\backslash \{\alpha_\gl\}$
$$ \frac{\ln r_{\gl,\theta_0}(t)}{t} \to \gamma_\gl\;,\quad t\to + \infty\;.$$
In other words, there is one initial condition $\alpha_\gl$ for which the solution of \eqref{Eq:ODEz} decays exponentially fast at $+\infty$, while for any other initial condition the solution grows exponentially fast.\\
Each $\alpha_\gl$ gives rise to a square integrable solution, and therefore to a genuine eigenfunction of $\cH$. One could therefore think that this material is sufficient to characterize the nature of the spectrum of $\cH$. Note however that the existence of $\alpha_\gl$ is true up to an event of zero probability that \emph{depends} on $\lambda$, and it happens that this is \emph{not} true for all $\lambda$ simultaneously with probability one (see~\cite{CarmonaDuke}, end of Section 2).

\section{Construction of the operator}\label{Sec:FullSpace}

Our goal in this section is to construct a self-adjoint operator associated to the differential expression
$$\tau f := - f''  + f\xi \;.$$
In order to present the construction in a unified framework, we will work on a generic interval $(a,b) \subset \R$, with in mind either $(-L/2,L/2)$ or $\R$ itself. Of course, the nature of the operator depends drastically on whether $(a,b)$ is bounded or not, as we will see below. From now on, we let $B(t) := \langle \xi, \mathbf{1}_{[0,t]} \rangle$ be the Brownian motion associated to $\xi$.\\

The starting point of the construction consists in identifying a small set of functions on which the operator acts. Typically, one takes the set of infinitely differentiable functions with compact support in $(a,b)$. A difficulty arises in the present setting: since $\xi$ is irregular ``everywhere'', $-f''+ f\xi $ never lies in $L^2$ for any non-zero twice differentiable function $f$. As a consequence the small set of functions alluded to above is not obvious to identify.\\

The way out is the following elementary observation: an integration by parts prompts
$$\tau f = -f'' + f\xi = -(f'-Bf)' - f' B\;,$$
which suggests to look for functions $f$ which are not twice differentiable, but are such that $f$ and $f'-Bf$ are differentiable. This is in line with the informal discussion in the introduction that suggested that $f''$ should behave like $f\xi$ in order for $f$ to lie in the domain.\\

It turns out that the theory of generalized Sturm-Liouville operators allows to deal with such a \emph{distorted derivative}: in this section, we present the construction of the operator following the monograph by Weidmann~\cite{Weidmann} on this topic.\\

Before we start, let us introduce the Wronskian at $x$ of two functions $y_1$ and $y_2$:
\begin{align}\label{Wronskian}
	W_x(y_1,y_2) := y_1(x) y_2'(x) -  y_1'(x) y_2(x)\;.
\end{align}

Let us also state a general existence and uniqueness result for the ODE \eqref{Eq:ODEz}, and generalizations thereof. Let us mention that the only requirement on $\xi$ for this result to hold is that $t\mapsto B(t)$ lies in $L^2_{\mbox{\tiny loc}}$.

\begin{lemma}\label{Lemma:ODEz}
	Fix $z\in \C$ and $c\in\R$. For any $u,v \in \C$, there exists a unique function $y$ such that $y$ and $y'-By$ are absolutely continuous, satisfy $y(c) = u$, $y'(c) = v$ and
	\begin{equation}\label{Eq:ODEy}
		-(y'-By)' - y'B = z y\;.
	\end{equation}
	More generally, given any function $g\in L^1_{\mbox{\tiny loc}}$, the set of functions $y$ such that $y$ and $y'-By$ are absolutely continuous, and satisfy
	\begin{equation}\label{Eq:ODEyg}
		-(y'-By)' - y'B = z y + g\;,
	\end{equation}
	is a two-dimensional vector space. Given two solutions $y_1$ and $y_2$ of \eqref{Eq:ODEy} with a non-zero\footnote{The Wronskian of two solutions of \eqref{Eq:ODEy} is independent of the point it is evaluated at.} Wronskian $w := W(y_1,y_2)$, any solution $y$ of \eqref{Eq:ODEyg} can be written
	$$ w \; y(x) = y_2(x) \Big( \gamma + \int_c^x y_1(t) g(t) dt\Big) + y_1(x) \Big( \beta - \int_c^x y_2(t) g(t) dt\Big)\;,$$
	for some parameters $\beta,\gamma \in \C$.
\end{lemma}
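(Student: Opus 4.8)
The plan is to reduce the generalized ODE \eqref{Eq:ODEyg} to a first-order linear system to which the classical Carathéodory existence-uniqueness theory applies, the only subtlety being that the coefficients involve $B \in L^2_{\mathrm{loc}}$ rather than continuous functions. I would introduce the \emph{distorted derivative} $y^{[1]} := y' - By$ as a new unknown, so that a pair of absolutely continuous functions $(y, y^{[1]})$ solving \eqref{Eq:ODEyg} is equivalent to a solution of
\begin{align}\label{Eq:System}
	\begin{pmatrix} y \\ y^{[1]} \end{pmatrix}' = \begin{pmatrix} B & 1 \\ -z - B^2 & -B \end{pmatrix} \begin{pmatrix} y \\ y^{[1]} \end{pmatrix} - \begin{pmatrix} 0 \\ g \end{pmatrix}\;.
\end{align}
Indeed, the first row reads $y' = By + y^{[1]}$, which is just the definition of $y^{[1]}$, and differentiating gives $(y^{[1]})' = -y'B - zy - g = -B(By + y^{[1]}) - zy - g$, which is exactly \eqref{Eq:ODEyg} rewritten. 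The coefficient matrix has entries in $L^2_{\mathrm{loc}} \subset L^1_{\mathrm{loc}}$ (the $B^2$ term lies in $L^1_{\mathrm{loc}}$ precisely because $B\in L^2_{\mathrm{loc}}$), and the inhomogeneous term $(0,g)^\top$ lies in $L^1_{\mathrm{loc}}$.

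For the first-order system \eqref{Eq:System}, existence and uniqueness of an absolutely continuous solution with prescribed initial data at $c$ follows from the standard Carathéodory theory for linear systems $Y' = A(x)Y + h(x)$ with $A, h \in L^1_{\mathrm{loc}}$: one rewrites the initial value problem as the integral equation $Y(x) = Y(c) + \int_c^x (A(s)Y(s) + h(s))\,ds$ and runs a Picard iteration, with convergence on each compact subinterval guaranteed by a Grönwall-type estimate using $\int |A|\,ds$. Here the initial data $y(c) = u$ and $y'(c) = v$ translate into $y^{[1]}(c) = v - B(c)u$, which is well-defined whenever $B$ has a pointwise value at $c$; more robustly, since $B$ is a Brownian path it is continuous, so $B(c)$ makes sense, but in the general $L^2_{\mathrm{loc}}$ setting I would phrase the data directly in terms of $(y(c), y^{[1]}(c))$. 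This establishes the first assertion and shows the solution space of the homogeneous equation is two-dimensional (parametrized by $Y(c)\in\C^2$).

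For the variation-of-constants formula, I would take two homogeneous solutions $y_1, y_2$ with Wronskian $w \ne 0$, and note that in terms of the distorted derivative the Wronskian reads $W_x(y_1,y_2) = y_1 y_2' - y_1' y_2 = y_1 y_2^{[1]} - y_1^{[1]} y_2$, since the $B$-terms cancel; this quantity is constant in $x$ because its derivative vanishes identically on using \eqref{Eq:System} (equivalently, the coefficient matrix in \eqref{Eq:System} is trace-free). Then I would verify directly that the claimed expression
$$ w\,y(x) = y_2(x)\Big(\gamma + \int_c^x y_1(t)g(t)\,dt\Big) + y_1(x)\Big(\beta - \int_c^x y_2(t)g(t)\,dt\Big) $$
solves \eqref{Eq:ODEyg} by differentiating: the distorted derivative of the right-hand side equals $y_2^{[1]}(\gamma + \int y_1 g) + y_1^{[1]}(\beta - \int y_2 g)$ because the two terms arising from differentiating the integrals, namely $y_2 y_1 g$ and $-y_1 y_2 g$, cancel; differentiating once more and using that $y_1, y_2$ solve the homogeneous equation produces the inhomogeneity $w\,g$, and the leftover integral-boundary terms combine via the constancy of $w = y_1 y_2^{[1]} - y_1^{[1]} y_2$.

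The main obstacle I anticipate is purely the low regularity: classical Sturm-Liouville arguments assume continuous or at least $L^1_{\mathrm{loc}}$ potentials, whereas here the potential enters only through $B \in L^2_{\mathrm{loc}}$ and the naive second-order formulation has no meaning. The device that resolves this is exactly the passage to \eqref{Eq:System} with the distorted derivative, which absorbs the singular $f\xi$ term into a bona fide $L^1_{\mathrm{loc}}$ first-order system; once this reformulation is in place, every step is a routine application of Carathéodory theory and variation of constants, and the absolute continuity of both $y$ and $y^{[1]}$ is automatic from the integral representation.
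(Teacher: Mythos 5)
Your proposal is correct and takes essentially the same route as the paper: the paper's proof is exactly the reduction to the first-order system in $(y,\,y'-By)$ with the same coefficient matrix (whose entries lie in $L^1_{\mathrm{loc}}$ since $B\in L^2_{\mathrm{loc}}$), followed by "standard ODE arguments" for existence/uniqueness and "a simple computation" for the representation formula. You have merely filled in what the paper leaves implicit — the Carath\'eodory/Picard--Gr\"onwall argument, the constancy of the Wronskian via the trace-free matrix, and the variation-of-constants verification — so the two proofs coincide in substance.
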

\begin{proof}
	We only sketch the proof of the general statement involving $g$. Let us rewrite the equation in the matrix form
	\begin{align*}
		\begin{pmatrix}
			y\\
			y'-By
		\end{pmatrix}' = \begin{pmatrix}
		B&1\\
		-B^2-z&-B
	\end{pmatrix}\begin{pmatrix}
	y\\
	y'-By
\end{pmatrix} + \begin{pmatrix}
0\\
-g
\end{pmatrix}\;.
	\end{align*}
so that standard ODE arguments allow to conclude. The representation of the solution with the help of $y_1$ and $y_2$ follows from a simple computation.
\end{proof}

\subsection{The minimal and maximal operators}\label{Subsec:MinMax}

Let us define $\cD_{a,b}^{\max}$ as the set of complex-valued functions $f\in L^2(a,b)$ satisfying:\begin{enumerate}[label=(\roman*)]
	\item $f$ and $f'-Bf$ are absolutely continuous on $(a,b)$,
	\item $\tau f$ belongs to $L^2(a,b)$,
\end{enumerate}
and $\cD_{a,b}^c$ the set of all functions in $\cD_{a,b}^{\max}$ which are compactly supported in $(a,b)$. Since we are dealing with distorted derivatives, the density of these sets in $L^2(a,b)$ is not immediate, contrary to the case of a smooth potential.

\begin{lemma}\label{Lemma:Density}
	Fix $(a,b) \subset \R$. The set $\cD_{a,b}^c$, and therefore also the set $\cD_{a,b}^{\max}$, is dense in $L^2(a,b)$.
\end{lemma}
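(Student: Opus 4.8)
The plan is to show density of $\cD_{a,b}^c$ in $L^2(a,b)$, which immediately gives density of the larger set $\cD_{a,b}^{\max}$. I would argue by contradiction using the standard orthogonality criterion: a subspace is dense in a Hilbert space if and only if its orthogonal complement is trivial. So suppose $h \in L^2(a,b)$ satisfies $\langle f, h\rangle = 0$ for every $f \in \cD_{a,b}^c$; the goal is to prove $h = 0$ almost everywhere.

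First I would produce a rich enough supply of functions in $\cD_{a,b}^c$ to test against $h$. The key is Lemma \ref{Lemma:ODEz}: for any compactly supported $g \in L^1_{\mbox{\tiny loc}}(a,b)$, I can solve the inhomogeneous equation $\tau f = g$ (taking $z=0$, say) on a slightly larger interval and then arrange, by choosing the two free constants $\beta,\gamma$ appropriately, that the solution $f$ vanishes together with its distorted derivative $f'-Bf$ outside a compact subinterval of $(a,b)$. Concretely, given $g$ supported in $[c,d] \subset (a,b)$, the variation-of-constants representation in Lemma \ref{Lemma:ODEz} shows that $f$ is automatically a solution of the homogeneous equation to the left of $c$ and to the right of $d$; imposing $f = 0$ to the left of $c$ fixes the constants, and then $f$ vanishes to the right of $d$ precisely when the two compatibility conditions $\int_c^d y_1(t) g(t)\, dt = 0$ and $\int_c^d y_2(t) g(t)\, dt = 0$ hold, where $y_1, y_2$ are a basis of homogeneous solutions. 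Such an $f$ is compactly supported, and since $f$ and $f'-Bf$ are absolutely continuous with $\tau f = g \in L^2$, it lies in $\cD_{a,b}^c$.

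Next I would exploit the orthogonality relation. For every $g$ supported in $[c,d]$ satisfying the two linear constraints above, the corresponding $f$ satisfies $\langle f, h\rangle = 0$. I would then integrate by parts twice (legitimately, because $f$ and $f'-Bf$ are absolutely continuous and $f$ is compactly supported, so all boundary terms vanish) to move the operator $\tau$ off of $f$ and onto a suitable primitive of $h$. Writing $H$ for a double distorted-primitive of $h$, this should yield $\langle \tau f, H\rangle = \langle g, H\rangle = \int_c^d g(t)\, \overline{H(t)}\, dt = 0$ for all admissible $g$. Since $g$ ranges over all of $L^2(c,d)$ subject only to being orthogonal to the two fixed functions $y_1, y_2$, this forces $H$ to be a linear combination of $y_1$ and $y_2$ on $(c,d)$; equivalently $\tau H = 0$, hence $h = 0$ on $(c,d)$. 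As $[c,d]$ was an arbitrary compact subinterval, $h=0$ almost everywhere on $(a,b)$, proving the claim.

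The main obstacle I anticipate is the careful bookkeeping in the double integration by parts when the derivatives are the distorted derivatives $f'-Bf$ rather than ordinary ones, together with making rigorous the ``double primitive'' $H$ of $h$ and verifying it inherits enough regularity for the argument (in particular that the two integrations by parts are valid and boundary terms genuinely vanish). One must check that the adjoint computation $\langle \tau f, H\rangle = \langle f, \tau H\rangle$ holds in this generalized Sturm--Liouville setting, which is exactly the self-adjointness of the formal expression $\tau$ at the level of compactly supported functions; this is where the specific structure $\tau f = -(f'-Bf)' - f'B$ from the excerpt must be used rather than the naive $-f''+f\xi$. Everything else, constructing the test functions and deducing $h=0$, is then routine.
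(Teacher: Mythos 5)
Your proof is correct, but it takes a genuinely different route from the paper's. You run the classical Sturm--Liouville duality argument: show the orthogonal complement of $\cD^c_{a,b}$ is trivial by building compactly supported elements of $\cD^c_{a,b}$ from the variation-of-constants formula of Lemma \ref{Lemma:ODEz} under the two compatibility conditions $\int_c^d y_1 g = \int_c^d y_2 g = 0$, introducing a local solution $H$ of $\tau H = h$, and invoking Green's formula (the display preceding \eqref{Eq:WaWb}; it is a purely local integration-by-parts identity, established independently of the density lemma, so there is no circularity) with vanishing boundary Wronskians --- which indeed vanish since $f$ and $f'-Bf$ are zero at $c$ and $d$ --- to get $\langle g, H\rangle = 0$ for all admissible $g$, hence $H \in \mbox{span}\{y_1,y_2\}$ and $h = \tau H = 0$ on $(c,d)$. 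The paper instead argues by direct approximation: given a $\cC^1$ compactly supported $f$, it sets $\varphi := f'-Bf$, approximates $\varphi$ by smooth compactly supported $\varphi_n$, solves the \emph{first-order} distorted equation $g_n' - Bg_n = \varphi_n$ (so the quasi-derivative of $g_n$ is automatically absolutely continuous), and cuts off, obtaining $f_n := \chi g_n \in \cD^c_{a,b}$ converging to $f$ in $L^2$. What each buys: the paper's proof is constructive and needs strictly less --- only solvability of the first-order equation, i.e.\ $B \in L^1_{\mbox{\tiny loc}}$, as the paper notes after the statement --- and it never touches Green's formula or the second-order solution theory; your proof requires Lemma \ref{Lemma:ODEz} (hence $B \in L^2_{\mbox{\tiny loc}}$) and the quasi-derivative form of the Lagrange identity, which is exactly the point you rightly flag as needing care, but in exchange it is the standard textbook scheme, transfers verbatim to any generalized Sturm--Liouville operator, and avoids exhibiting an explicit approximating sequence.
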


The only property on $\xi$ that is required for this result to hold is that $t\mapsto B(t)$ lies in $L^1_{\mbox{\tiny{loc}}}(a,b)$ (here it is even continuous). Indeed, this assumption ensures the following fact, which will be used in the proof: given $g\in L^1_{\mbox{\tiny{loc}}}(a,b)$, $v \in \C$, $c \in (a,b)$ and $z \in \C$, there exists a unique absolutely continuous function $f$ such that $f(c) = v$ and $f' - B f = g$.

\begin{proof}
	It suffices to show that $\cD_{a,b}^c$ is dense (for the $L^2$ norm) in the set of $\cC^1$ functions with compact support in $(a,b)$. Let $f$ be such a function and set $\varphi := f' - Bf$, which is also compactly supported. Let $c\in (a,b)$ be a point where $f$ vanishes and note that
	$$ f(t) = \int_c^t \varphi(s) e^{\int_s^t B(u) du} ds\;.$$
	To conclude that $f$ lies in $\cD_{a,b}^c$, we would need $f' - Bf$ to be absolutely continuous but this is not the case. We will therefore proceed by approximation.\\
	Let $(\varphi_n)_{n\ge 1}$ be a sequence of smooth functions with compact support that converges pointwise and in $L^2$ towards $\varphi$. For any $n$, let $g_n$ be the solution of $g_n' - Bg_n = \varphi_n$ that equals $0$ at $c$, namely
	$$ g_n(t) = \int_c^t \varphi_n(s) e^{\int_s^t B(u) du} ds\;.$$
	At this point, we observe that $g_n' - Bg_n$ is absolutely continuous. However $g_n$ is not necessarily compactly supported so it may not belong to $\cD_{a,b}^c$. Let $\chi$ be a compactly supported and smooth function which equals $1$ on the support of $f$. Set $f_n := \chi g_n$. Now $f_n$ is differentiable, $f'_n - B f_n = \chi \varphi_n + \chi' f_n$ is also differentiable and $\tau f_n$ lies in $L^2(a,b)$. From its explicit expression, it is clear that $f_n$ converges to $f$ pointwise and in $L^2$, thus concluding the proof.
\end{proof}

\medskip

An important identity for the sequel is \emph{Green's formula} which states that for all $f,g \in \cD_{a,b}^{\max}$ and for any $a<c<d<b$:
$$\int_c^d \overline{g(x)} \;\tau f(x) dx  = W_c(\bar{g}, f) - W_d(\bar{g},f) + \int_c^d \overline{ \tau g(x)} \; f(x) dx \;.$$
The integrability properties satisfied by $f,g$ allow to pass to the limit $c\downarrow a$ and $d\uparrow b$ and to deduce that $W_a(\bar{g}, f)$ and $W_b(\bar{g}, f)$ are well-defined. We thus obtain
\begin{equation}\label{Eq:WaWb}
	\int_a^b  \overline{g(x)} \;\tau f(x) dx = W_a(\bar{g}, f) - W_b(\bar{g},f) + \int_a^b \overline{ \tau g(x)} \; f(x) dx\;.
\end{equation}

We now let $\cH^{\max}_{a,b}$ be the operator $\tau$ acting on the domain $\cD_{a,b}^{\max}$. The next result will imply that it is a closed operator, usually referred to as the \emph{maximal operator}. Let also $\cH^{c}_{a,b}$ be the operator $\tau$ acting on the domain $\cD_{a,b}^{c}$. This operator is not closed, and we thus let $(\cH^{\min}_{a,b},\cD^{\min}_{a,b})$ be its closure: it is referred to as the \emph{minimal operator}. The following proposition gives a characterization of $\cD^{\min}_{a,b}$ and identifies the adjoints of the previously defined operators. For convenience, we omit the proof and refer to~\cite[Lemma 9.4]{Teschl}.

\begin{proposition}\label{Prop:MinMax}
	We have 
	\begin{align*}
		(\cH_{a,b}^c)^\ast =(\cH_{a,b}^{\min})^\ast = \cH^{\max}_{a,b}\,.
	\end{align*}
	Moreover, the domain of the minimal operator writes:
	\begin{align*}
		\cD^{\min}_{a,b} &= \{u \in \cD^{\max}_{a,b}\;:\: \quad W_a(v,u) - W_b(v,u) = 0 \quad \mbox{ for all }\; v \in \cD^{\max}_{a,b}\}\,,\\
		&=  \{u \in \cD^{\max}_{a,b}\;:\: \quad W_a(v,u) = W_b(v,u) = 0 \quad \mbox{ for all }\; v \in \cD^{\max}_{a,b}\}\,.
	\end{align*}
\end{proposition}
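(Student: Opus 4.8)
The plan is to prove the three assertions in the order they are stated, the backbone being Green's formula \eqref{Eq:WaWb} together with the existence/variation-of-constants statement of Lemma \ref{Lemma:ODEz}. First, since $\cH^{\min}_{a,b}$ is by definition the closure of $\cH^c_{a,b}$, and the adjoint of an operator depends only on the closure of its graph, one has $(\cH^c_{a,b})^\ast = (\cH^{\min}_{a,b})^\ast$ for free; so the whole first display reduces to proving $(\cH^c_{a,b})^\ast = \cH^{\max}_{a,b}$. The inclusion $\cH^{\max}_{a,b}\subseteq(\cH^c_{a,b})^\ast$ is the easy half: for $g\in\cD^{\max}_{a,b}$ and $f\in\cD^c_{a,b}$, I would apply Green's formula to the pair $(f,g)$ on a compact interval containing $\mathrm{supp}\,f$; since $f$ and $f'-Bf$ vanish near $a$ and $b$, the boundary Wronskians disappear and $\langle g,\tau f\rangle=\langle\tau g,f\rangle$, so $g$ lies in the adjoint domain with $(\cH^c_{a,b})^\ast g=\tau g$.

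The hard half is $(\cH^c_{a,b})^\ast\subseteq\cH^{\max}_{a,b}$. Take $g$ in the adjoint domain and set $h:=(\cH^c_{a,b})^\ast g\in L^2(a,b)\subseteq L^1_{\mathrm{loc}}$. By Lemma \ref{Lemma:ODEz} with $z=0$ there is a function $G$, with $G$ and $G'-BG$ absolutely continuous, solving $\tau G=h$ on $(a,b)$. Comparing the defining identity of the adjoint against each $f\in\cD^c_{a,b}$ with Green's formula applied to $(f,G)$ shows that $g$ and $G$ both pair with $\tau f$ the same way, so the difference $w:=g-G$ satisfies $\langle w,\tau f\rangle=0$ for every $f\in\cD^c_{a,b}$. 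If one can upgrade this weak orthogonality to the assertion that $w$ is a genuine solution of $\tau w=0$, then $g=G+w$ has $g$ and $g'-Bg$ absolutely continuous, $g\in L^2$, and $\tau g=h\in L^2$, i.e. $g\in\cD^{\max}_{a,b}$ with $\cH^{\max}_{a,b}g=h$, which closes the inclusion.

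This upgrade is the main obstacle. Fix two linearly independent real solutions $u_1,u_2$ of $\tau u=0$ with Wronskian $1$, and fix a compact $[c,d]\subset(a,b)$. Using the variation-of-constants representation in Lemma \ref{Lemma:ODEz}, I would show that a function $\psi\in L^2$ supported in $[c,d]$ equals $\tau f$ for some $f\in\cD^c_{a,b}$ \emph{exactly} when $\int u_1\psi=\int u_2\psi=0$: choosing the two free constants makes $f$ vanish to the left of $c$, and the two orthogonality relations are precisely what forces $f$ (and $f'-Bf$) to vanish again to the right of $d$. Hence $\langle w,\psi\rangle=0$ for all $\psi$ in the orthogonal complement of $\mathrm{span}\{u_1,u_2\}$ in $L^2[c,d]$, so $w\in\mathrm{span}\{u_1,u_2\}$ on $[c,d]$; exhausting $(a,b)$ by such intervals and matching on overlaps makes $w$ a homogeneous solution on all of $(a,b)$. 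The delicate points are to verify, in this distorted-derivative setting, that the variation-of-constants formula genuinely outputs an element of $\cD^c_{a,b}$ and that compact support of $f$ is captured exactly by the two integral constraints.

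Finally, for the domain of the minimal operator, I would use that the closed densely defined (by Lemma \ref{Lemma:Density}) operator $\cH^{\min}_{a,b}$ equals its bi-adjoint, so taking the adjoint of $(\cH^{\min}_{a,b})^\ast=\cH^{\max}_{a,b}$ yields $\cH^{\min}_{a,b}=(\cH^{\max}_{a,b})^\ast$. Thus $u\in\cD^{\min}_{a,b}$ iff $u\in\cD^{\max}_{a,b}$ (as $\cH^{\min}_{a,b}\subseteq\cH^{\max}_{a,b}$) and $\langle\tau v,u\rangle=\langle v,\tau u\rangle$ for all $v\in\cD^{\max}_{a,b}$; feeding this into \eqref{Eq:WaWb} turns the condition into $W_a(\bar v,u)-W_b(\bar v,u)=0$ for all such $v$, and since $\cD^{\max}_{a,b}$ is invariant under complex conjugation ($B$ is real) the bar may be dropped, giving the first characterization. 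To obtain the second, I would decouple the endpoints: given $v$, multiply by a smooth cutoff $\chi$ equal to $1$ near $a$ and $0$ near $b$ to get $\tilde v:=\chi v\in\cD^{\max}_{a,b}$ with $W_a(\tilde v,u)=W_a(v,u)$ and $W_b(\tilde v,u)=0$; the first characterization applied to $\tilde v$ forces $W_a(v,u)=0$, and symmetrically $W_b(v,u)=0$, for every $v$, which is the second line.
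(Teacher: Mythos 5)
Your proposal is correct: the paper itself omits the proof of this proposition and refers to Teschl's Lemma 9.4, and your argument is exactly that standard proof, correctly transposed to the quasi-derivative setting --- $(\cH^c_{a,b})^\ast=(\cH^{\min}_{a,b})^\ast$ since adjoints only see the closure, Green's formula for the easy inclusion $\cH^{\max}_{a,b}\subseteq(\cH^c_{a,b})^\ast$, variation of constants plus the two-moment characterization of $\{\tau f: f\in\cD^c_{a,b},\ \mathrm{supp}\,f\subseteq[c,d]\}$ to upgrade weak orthogonality to $\tau w=0$ for the hard inclusion, and the bi-adjoint identity together with the smooth-cutoff decoupling of the endpoints for the two descriptions of $\cD^{\min}_{a,b}$. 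The delicate points you flag (that the variation-of-constants formula, with both free constants set to zero, produces a function vanishing together with $f'-Bf$ at $c$, and that the two integral constraints against $u_1,u_2$ are exactly what kills the solution beyond $d$) do check out in this setting, since the $B$-terms cancel in the Wronskian and Lemma \ref{Lemma:ODEz} provides the needed local solvability.
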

\begin{remark}
	As previously mentioned, this proposition implies that the operator $(\cH^{\max}_{a,b},\cD^{\max}_{a,b})$ is closed and that the assumption (i) imposed on elements in $\cD^{\max}_{a,b}$ is ``correct'': namely, one does not need to relax this assumption and include more functions.
\end{remark}

\subsection{Weyl's alternative}\label{Subsec:Weyl}

Our goal is now to investigate the possible self-adjoint extensions of $\cH^{\min}_{a,b}$. By the result of Proposition \ref{Prop:MinMax}, it necessarily lies in between $\cH^{\min}_{a,b}$ and $\cH^{\max}_{a,b}$.\\

A prominent role will be played by the solutions of the ODE \eqref{Eq:ODEz} parametrized by $z\in \C$. As stated in Lemma \ref{Lemma:ODEz}, the set of solutions is a two-dimensional complex vector space, and of course any two solutions $y_1$ and $y_2$ form a basis provided they are linearly independent. It is easy to check that for any two solutions $y_1$ and $y_2$, the value of their Wronskian \eqref{Wronskian} does not depend on $x$ and vanishes if and only if they are colinear.\\

Here comes an important dichotomy. We say that $\tau$ is
\begin{itemize}
	\item Limit circle at $a$ if for all $z\in \C$, all solutions $y$ of \eqref{Eq:ODEz} are square-integrable at $a$,
	\item Limit point at $a$ if for all $z\in \C$, there exists a solution $y$ of \eqref{Eq:ODEz} that is not square-integrable at $a$.
\end{itemize}
The same definition is taken at $b$.\\
One may think that these two cases do not cover all possible situations as the number of square-integrable solutions may depend on $z$. Actually, this is a famous result, often referred to as Weyl's alternative, that these two situations are complementary. More precisely, Weyl's alternative states that: if for some $z\in \C$, there are two linearly independent solutions $y_1$ and $y_2$ that are square-integrable at $a$ then for all $z$, all solutions are square-integrable at $a$. The proof is elementary and can be found in~\cite[Theorem 5.6]{Weidmann}, but also in~\cite[Lemma III.1.2]{Carmona} and~\cite[Theorem 9.9]{Teschl}.\\

In the present situation, it is immediate to check that, almost surely, we are in the limit circle case at $a$ (resp.~$b$) whenever $a$ (resp.~$b$) is finite: indeed all solutions of \eqref{Eq:ODEz} are locally bounded since $B$ is locally in $L^2$ (thanks to Lemma \ref{Lemma:ODEz}). On the other hand, the behavior at infinity requires a little argument:
\begin{lemma}
	Fix $\lambda\in \R$. Almost surely, the solution $y$ of \eqref{Eq:ODEz} satisfying $y(0)=1$ and $y'(0)=0$ is integrable neither at $+\infty$ nor at $-\infty$. As a consequence, almost surely we are in the limit point case at both infinities.
\end{lemma}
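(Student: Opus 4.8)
The plan is to show that the modulus $r_\gl$ of this solution grows exponentially fast at both $\pm\infty$; square-integrability then fails, and since this rules out the limit circle case for the single value $z=\gl$, Weyl's alternative (Subsection~\ref{Subsec:Weyl}) forces the limit point case at both infinities. The solution in question is the Neumann solution, whose initial Pr\"ufer phase is the \emph{deterministic} angle $\theta_0 = \pi/2$ (indeed $y'(0)+iy(0) = i = e^{i\pi/2}$); the key conceptual point is that the particular value of $\theta_0$ is irrelevant, only that it is deterministic and hence almost surely distinct from the random contracting direction $\alpha_\gl$ produced by Oseledec's theorem.

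For $t\ge 0$, I would work directly with $\rho_{\gl,\theta_0}(t) = \ln r_\gl^2(t)$ and read off from \eqref{EDSlnr} the decomposition
\begin{equation*}
	\rho_{\gl,\theta_0}(t) = \int_0^t F(\theta_\gl(s))\,ds + \int_0^t \sin 2\theta_\gl(s)\, dB(s)\;,\qquad F(\theta) := -(\gl-1)\sin 2\theta - \tfrac12 \sin^2 2\theta + \sin^2\theta\;.
\end{equation*}
Since the phase process $\theta_\gl \bmod \pi$ is a Markov process on $[0,\pi)$ with unique invariant measure $\mu_\gl$, the pointwise ergodic theorem applies from the fixed starting angle $\theta_0$ and gives $\frac1t\int_0^t F(\theta_\gl(s))\,ds \to \int F\,d\mu_\gl = 2\gamma_\gl$ almost surely, the last equality being exactly the identity obtained in the computation of $\gamma_\gl$ above. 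The stochastic integral is a martingale $M_t$ with bracket $\langle M\rangle_t = \int_0^t \sin^2 2\theta_\gl(s)\,ds \le t$; the strong law for martingales then yields $M_t/t \to 0$ almost surely (whether or not $\langle M\rangle_\infty$ is finite). Combining the two contributions gives $\frac1t \ln r_\gl(t) \to \gamma_\gl$ almost surely.

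It remains to note that $\gamma_\gl > 0$ for every $\gl \in \R$: in the explicit formula the factor $\exp(-v^3/6)$ makes both integrals finite, and their integrands are strictly positive on $(0,\infty)$, so numerator and denominator are positive and so is their ratio. Hence $r_\gl(t)$ grows exponentially and $y$ is not square-integrable at $+\infty$. For $-\infty$, I would appeal to the time-reversal symmetry of white noise: writing $\tilde y(s) := y(-s)$, the function $\tilde y$ solves the same equation \eqref{Eq:ODEz} driven by the reversed noise $s\mapsto \xi(-s)$, which has the same law as $\xi$, with the same deterministic initial data $\tilde y(0)=1$, $\tilde y'(0)=0$. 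The argument above therefore gives exponential growth of $\tilde y$ as $s\to+\infty$, i.e.\ of $y$ as $t\to-\infty$, so $y$ is not square-integrable at $-\infty$ either.

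The main obstacle is the justification of the pointwise ergodic theorem from a fixed deterministic starting angle rather than for $\mu_\gl$-almost every angle: this is precisely the point at which one must use that the contracting Oseledec direction $\alpha_\gl$ is random and almost surely avoids any fixed $\theta_0$. I would either invoke Harris recurrence of the phase diffusion (which upgrades Birkhoff's theorem to convergence from every starting point), or, alternatively, argue from Furstenberg's convergence \eqref{Eq:Furstenberg} together with the fact that the law of $\alpha_\gl$ is non-atomic, so that $\P(\theta_0 = \alpha_\gl) = 0$ and the solution started from $\theta_0$ grows at the Furstenberg rate. The positivity $\gamma_\gl>0$ and the control of the martingale bracket are then routine.
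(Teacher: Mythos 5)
There is a genuine gap at the final step of your argument: from ``$r_\lambda(t)$ grows exponentially'' you conclude ``hence $y$ is not square-integrable at $+\infty$'', but this implication is not immediate, and it is exactly the point where the paper has to do real work. The lemma concerns $\int_0^\infty y_\lambda(t)^2\,dt$ with $y_\lambda = r_\lambda\sin\theta_\lambda$, whereas your growth statement concerns the Pr\"ufer radius $r_\lambda^2 = y_\lambda^2 + (y_\lambda')^2$. By the Sturm oscillation property the phase crosses every multiple of $\pi$, so $y_\lambda$ vanishes at infinitely many times, and a priori the exponential growth of $r_\lambda$ could be carried almost entirely by $y_\lambda'$, with $\sin^2\theta_\lambda$ tiny outside a set of rapidly shrinking Lebesgue measure; in that scenario $\int_0^\infty r_\lambda^2\sin^2\theta_\lambda\,dt$ could converge even though $r_\lambda\to\infty$ exponentially. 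Ruling this out requires a quantitative statement that the phase spends a non-degenerate amount of time away from $\pi\Z$. This is precisely what the paper's proof supplies: it introduces the stopping times $T_k$ (successive hitting times of $(2k+1)\pi/2$) and $S_k$ (exit times from a window of width $\pi/4$ around that level), observes that the durations $(S_k-T_k)_{k\ge 0}$ are i.i.d.\ positive random variables, notes that $\sin^2\theta_\lambda\ge 1/2$ on $[T_k,S_k]$, and bounds
$$ \int_0^{\infty} y_\lambda(t)^2\,dt \;\ge\; \frac{C^2}{2}\sum_{k\ge 0}(S_k-T_k)\,e^{2(\gamma_\lambda-\varepsilon)T_k}\;, $$
which diverges because the summands do not tend to $0$. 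Your proposal contains no substitute for this step, and the Oseledec/Furstenberg considerations you invoke do not address it: they only concern the growth of the norm $r_\lambda$, not how its mass is distributed between the two components.

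The rest of your plan is sound and, interestingly, partly different from the paper. The paper obtains the lower bound $r_\lambda(t)\ge C e^{(\gamma_\lambda-\varepsilon)t}$ by citing Lemma 7.2 of \cite{DL_Crossover}, whereas you re-derive the growth via the drift/martingale decomposition of \eqref{EDSlnr}, the ergodic theorem for the phase, and the martingale strong law; this is a reasonable self-contained alternative (your Harris-recurrence route for starting from the deterministic angle $\pi/2$ is the right way to handle the fixed initial condition, and your positivity argument for $\gamma_\lambda$ and the symmetry argument at $-\infty$ both match the paper). But to complete the proof you must convert growth of the vector norm into divergence of $\int y_\lambda^2$, either by the paper's i.i.d.\ excursion argument or by some other quantitative control showing that the time spent by $\theta_\lambda$ near $\pi\Z$ cannot shrink exponentially fast (which one could try to extract from the bounded drift and diffusion coefficients in \eqref{eq_theta}, but that needs to be stated and proved).
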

\begin{proof}
	By symmetry, it suffices to prove the property at $+\infty$. We use the Pr\"ufer coordinates introduced in Subsection \ref{Subsec:Prufer}. Consider the diffusion $(\theta_\gl,r_\gl)$ starting from $\theta_\gl(0) = \pi/2$ and $r_\gl(0) = 1$. Introduce for every $k\ge 0$, $T_k := \inf\{t\ge 0: \theta_\gl(t)=(2k+1)\pi/2\}$ and $S_k := \inf\{t\ge T_k: \vert \theta_\gl(t) - (2k+1)\pi/2 \vert = \pi/4\}$. Then, the r.v.~$(S_k-T_k)_{k\ge 0}$ are i.i.d.~positive r.v. By~\cite[Lemma 7.2]{DL_Crossover}, (note that $\gamma_\gl = \nu_\gl/2$) for any given $\varepsilon > 0$, there exists a random $C > 0$ such that for all $t\ge 0$
	$$ r_\gl(t) \ge C e^{(\gamma_\gl - \varepsilon)t}\;.$$
	Since for $t\in [T_k,S_k]$, $\sin^2 \theta_\gl(t) \ge 1/2$ we have
	\begin{align*}
		\int_0^{+\infty} y_\gl(t)^2 dt =  \int_0^{+\infty} r_\gl(t)^2 \sin^2 \theta_\gl(t) dt \ge   \frac{C^2}{2} \sum_{k\ge 0} (S_k-T_k) e^{2(\gamma_\gl - \varepsilon)T_k}\;.
	\end{align*}
	Almost surely this series diverges since $(S_k-T_k) e^{2(\gamma_\gl - \varepsilon)T_k}$ does not converge to $0$ as $k\to\infty$.
\end{proof}

Let us now explain the denominations ``limit circle'' and ``limit point''. To simplify the presentation, let us restrict to the present situation where $\tau$ is limit point only at $\pm \infty$. Below we let $e_\alpha$ denote the unit vector in $\R^2$ whose coordinates are $\sin \alpha$ and $\cos \alpha$. For any vector $v\in \C^2$, we write $v\parallel e_\alpha$ if $v$ and $e_\alpha$ are colinear, that is, if $v = c e_\alpha$ for some $c\in \C$. With a slight abuse of notation, we will write $y(c) \parallel e_\alpha$ or ``$y$ is colinear to $e_\alpha$ at $c$'' as a shortcut for $(y(c),y'(c)) \parallel e_\alpha$.\\

Fix $b>0$ and some $z\in \C\backslash \R$. Recall that thanks to Lemma \ref{Lemma:ODEz}, there exist unique solutions $y^{N}$ and $y^{D}$ of \eqref{Eq:ODEz} that satisfy
$$ y^N(0) = 1\;,\quad (y^N)'(0) = 0\;,\quad  y^D(0) = 0\;,\quad (y^D)'(0) = 1\;.$$
The letters $N$ and $D$ stand for Neumann and Dirichlet and any solution of \eqref{Eq:ODEz} is a linear combination of $y^N$ and $y^D$. Note that $y^N$ (resp.~$y^D$) cannot be colinear to some $e_\beta$ at $b$: if it were, then $W_b(\bar{y}^{N}, y^N)$ would vanish and since by definition of $y^N$, $W_0(\bar{y}^{N}, y^N)$ already vanishes, identity \eqref{Eq:WaWb} would imply that
$$ {z} \int_0^b  \vert y^N \vert^2 = \int_0^b \overline{y^{N}} \tau y^{N} = \int_0^b \overline{\tau y^{N}}  y^N = \bar{z} \int_0^b  \vert y^N \vert^2\;,$$
a contradiction since $z$ is non-real.\\
As a consequence, for any $\beta \in [0,\pi)$, there exists a unique complex number $m_z(b,\beta) \neq 0$ such that, if we set
$$ y := y^N + m_z(b,\beta) y^D\;,$$
then $y$ is colinear to $e_\beta$ at $b$.\\
Then it can be checked that as $\beta$ varies over $[0,\pi)$, $m_z(b,\beta)$ describes a circle $\ccC(b)$ in the complex plane whose radius equals
$$ \frac1{2 \vert \Im(z) \vert  \int_0^b \vert y^D \vert^2}\;.$$
It can also be checked that as $b$ increases the circles $\ccC(b)$ decrease in the sense that for $b < b'$, the circle $\ccC(b')$ lies inside the circle $\ccC(b)$. Therefore, as $b\to\infty$ $\ccC(b)$ either converges to a circle or to a point.

If it converges to a point, the radius converges to $0$ and therefore $y^D$ is not square integrable at $+\infty$: $\tau$ is then limit point at $+\infty$. The arguments actually show that if we denote by $m_z(+\infty)$ the point that arises as the limit of the circles $\ccC(b)$ as $b\to\infty$, then $y^N + m_z(+\infty) y^D$ is the unique (up to multiplicative factor) solution of \eqref{Eq:ODEz} that is square integrable at $+\infty$.

If it converges to a circle, then the radius remains positive so $y^D$ is square integrable at $+\infty$, and it can be checked that all solutions of \eqref{Eq:ODEz} are integrable at infinity: $\tau$ is then limit circle at $+\infty$. (In this last case, the argument shows moreover that for any $\beta \in [0,\pi)$ there exists a (necessarily) integrable solution of \eqref{Eq:ODEz} that is colinear to $e_\beta$ but we will not use this fact).\\

The rest of the presentation now considers separately the case where the interval $(a,b)$ is bounded (limit circle case) and the case where $(a,b) = \R$ (limit point case). In the first case, we will construct self-adjoint extensions of the minimal operator by imposing some boundary conditions at $a$ and $b$. In the second case, we will see that there exists a unique self-adjoint extension.

\subsection{Limit circle case}

We start with the limit circle case at both boundaries, that holds whenever $(a,b)$ is a bounded interval. In the particular case where $(a,b)=(-L/2,L/2)$ and $\alpha = 0$, we get the operator $\cH_L$ of the introduction. Recall that $f(a) \parallel e_\alpha$ is a shortcut for $(f(a),f'(a)) \parallel e_\alpha$.

\begin{proposition}\label{Prop:Circle}
	Fix $\alpha \in [0,\pi)$ and some bounded interval $(a,b) \subset \R$. The operator
	$$ \cH_{a,b}^\alpha f := \tau f\;,$$
	on the domain
	\begin{align*}
		\cD_{a,b}^\alpha := \Big\{& f\in \cD_{a,b}^{\max}: f(a) \parallel e_\alpha \mbox{ and } f(b) \parallel e_\alpha \Big\}\;,
	\end{align*}
	is a self-adjoint operator. For any $z\in \C\backslash \R$, let $y^a$ and $y^b$ be two solutions of \eqref{Eq:ODEz} satisfying
	$$ y^a(a) \parallel e_\alpha \;,\quad y^b(b) \parallel e_\alpha\;.$$
	These two solutions are linearly independent and they yield the following kernel for the resolvent of $\cH_{a,b}^\alpha$. For any $g\in L^2(a,b)$
	$$ (\cH_{a,b}^\alpha - z)^{-1} g(s) = \int_a^b G_{a,b}^\alpha(z,s,t) g(t) dt\;,$$
	with\footnote{Recall that the Wronskian of two solutions of \eqref{Eq:ODEz} does not vary with the point it is evaluated at.}
	$$ G_{a,b}^\alpha(z,s,t) := \frac1{W(y^a,y^b)} \begin{cases} y^b(s) y^a(t) &\mbox{ if } s\ge t\;,\\
		y^b(t) y^a(s) &\mbox{ if } s \le t\;.
	\end{cases}
	$$
	The resolvents are Hilbert-Schmidt operators, and the spectrum of $\cH_{a,b}^\alpha$ is discrete, bounded below and accumulates at $+\infty$.
\end{proposition}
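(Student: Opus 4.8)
The plan is to realize $\cH_{a,b}^\alpha$ as a self-adjoint extension of $\cH_{a,b}^{\min}$ sitting inside $\cH_{a,b}^{\max}$, to exhibit its resolvent explicitly through the Green kernel $G_{a,b}^\alpha$, and then to read off the spectral properties from the compactness of that kernel together with a lower bound on the associated quadratic form. Note first that $\cD_{a,b}^c \subset \cD_{a,b}^\alpha$ (compactly supported functions vanish together with their derivative at $a$ and $b$, hence are trivially colinear to $e_\alpha$ there), so density of $\cD_{a,b}^\alpha$ in $L^2(a,b)$ is inherited from Lemma \ref{Lemma:Density} and the adjoint is well defined.

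First I would prove symmetry. Since $e_\alpha$ is a \emph{real} vector, the conditions $f(a) \parallel e_\alpha$ and $g(a) \parallel e_\alpha$ force $(f(a),f'(a))$ and $(\bar g(a),\bar g'(a))$ to be colinear, so $W_a(\bar g, f) = 0$, and likewise $W_b(\bar g, f) = 0$; Green's formula \eqref{Eq:WaWb} then gives $\langle \cH^\alpha g, f\rangle = \langle g, \cH^\alpha f\rangle$ for $f,g \in \cD_{a,b}^\alpha$. Because $\cH_{a,b}^{\min} \subset \cH_{a,b}^\alpha$, Proposition \ref{Prop:MinMax} yields $(\cH^\alpha)^\ast \subset (\cH_{a,b}^{\min})^\ast = \cH_{a,b}^{\max}$, so any $v$ in the domain of $(\cH^\alpha)^\ast$ lies in $\cD_{a,b}^{\max}$ and satisfies $W_a(\bar v, f) - W_b(\bar v, f) = 0$ for all $f \in \cD_{a,b}^\alpha$. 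To upgrade symmetry to self-adjointness it remains to force $v(a) \parallel e_\alpha$ and $v(b) \parallel e_\alpha$. Here I would use that the finite endpoints are limit circle: fixing a solution $y^a$ of \eqref{Eq:ODEz} colinear to $e_\alpha$ at $a$ and a cutoff $\chi$ equal to $1$ near $a$ and $0$ near $b$, the function $\chi y^a$ belongs to $\cD_{a,b}^\alpha$, testing against it kills the $b$-term and leaves $W_a(\bar v, \chi y^a) = 0$, which is precisely colinearity of $(\bar v(a), \bar v'(a))$ with $e_\alpha$; the symmetric construction at $b$ completes the argument.

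Next I would construct the resolvent for $z \in \C\backslash\R$. The solutions $y^a$ and $y^b$ are linearly independent: were they proportional, the common solution would be colinear to $e_\alpha$ at both endpoints, hence an $L^2$-eigenfunction of the symmetric operator $\cH^\alpha$ with non-real eigenvalue $z$, which is impossible. Thus $W(y^a,y^b) \neq 0$ and $G_{a,b}^\alpha$ is well defined. Given $g \in L^2(a,b)$, I would set $u(s) := \int_a^b G_{a,b}^\alpha(z,s,t) g(t)\, dt$ and verify, via the variation-of-parameters representation of Lemma \ref{Lemma:ODEz}, that $u$ solves $(\tau - z)u = g$, lies in $\cD_{a,b}^{\max}$, and inherits $u(a) \parallel e_\alpha$, $u(b) \parallel e_\alpha$ from the fact that near each endpoint $u$ is a scalar multiple of $y^a$, respectively $y^b$; hence $u = (\cH^\alpha - z)^{-1} g$. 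Since $G_{a,b}^\alpha(z,\cdot,\cdot)$ is continuous on the compact square $[a,b]^2$, it is square-integrable there, so the resolvent is Hilbert-Schmidt and in particular compact.

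Finally, compactness of the resolvent forces the spectrum to be purely discrete: isolated real (by self-adjointness) eigenvalues of finite multiplicity with no finite accumulation point. The one place where the singularity of the noise genuinely has to be handled is the lower bound. Using $\tau f = -(f'-Bf)' - f'B$ together with Green's formula and the boundary conditions, I would rewrite $\langle \cH^\alpha f, f\rangle$ in terms of $\int_a^b |f'|^2$ and integrals involving $B$; because $B$ is continuous, hence bounded, on $[a,b]$, an integration by parts transfers all derivatives off $\xi$ and a Young-type inequality absorbs the resulting cross terms into $\eps \int_a^b |f'|^2 + C_\eps \int_a^b |f|^2$, yielding $\langle \cH^\alpha f, f\rangle \geq -C \|f\|^2$. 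This excludes accumulation at $-\infty$, and since the operator is unbounded above the eigenvalues must accumulate only at $+\infty$. I expect this quadratic-form estimate to be the genuine obstacle, as it is the step where the distributional nature of $\xi$ must be tamed; the self-adjointness and resolvent computations are otherwise standard once Proposition \ref{Prop:MinMax} and Lemma \ref{Lemma:ODEz} are in hand.
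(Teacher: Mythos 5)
Your proposal is correct and follows essentially the same route as the paper's proof: self-adjointness via the symmetric extension of the minimal operator, Green's formula and a cutoff argument to force the boundary conditions on the adjoint domain; linear independence of $y^a,y^b$ from the non-reality of $z$; the variation-of-parameters representation of Lemma \ref{Lemma:ODEz} for the resolvent kernel; Hilbert--Schmidt from continuity of the kernel; and a lower bound on the quadratic form by absorbing the cross term $\langle f',Bf\rangle$ using the boundedness of $B$ on $[a,b]$. The only differences are cosmetic (you verify directly that the integral operator inverts $\cH_{a,b}^\alpha - z$, whereas the paper identifies the coefficients $\beta,\gamma$ for compactly supported $g$ and concludes by density), so no further comment is needed.
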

The proof follows the lines of~\cite[Th 9.6 and Lemma 9.7]{Teschl}: for completeness, we provide it.
\begin{proof}
	It is easy to check that $\cH_{a,b}^\alpha$ is a symmetric extension of $\cH_{a,b}^{\min}$, meaning that $\cD^{\min}_{a,b} \subset \cD_{a,b}^\alpha$ and $\cH_{a,b}^\alpha$ is symmetric on $\cD_{a,b}^\alpha$. Therefore its adjoint satisfies $(\cH_{a,b}^\alpha)^* \subset \cH_{a,b}^{\max}$. By \eqref{Eq:WaWb}, we have for any $g$ in the domain of $(\cH_{a,b}^\alpha)^*$
	$$ W_a(\bar{g}, f) - W_b(\bar{g},f)  = 0\;,\quad \forall f \in \cD_{a,b}^\alpha\;.$$
	Given any function $f \in \cD_{a,b}^\alpha$, one can multiply $f$ by a smooth cutoff function that equals $1$ near $a$ and vanishes near $b$: the product hence obtained is itself in $\cD_{a,b}^\alpha$ and consequently
	$$ W_a(\bar{g}, f) = W_b(\bar{g},f)  = 0\;,\quad \forall f \in \cD_{a,b}^\alpha\;.$$
	Since any $f\in \cD_{a,b}^\alpha$ satisfies the b.c.~imposed at $a$ and $b$, we deduce that $\bar{g}$ must also satisfy these b.c. This ensures that $g$ belongs to $\cD_{a,b}^\alpha$. We thus deduce that $\cH_{a,b}^\alpha$ is self-adjoint.\\
	We pass to the expression of the kernel of the resolvent. The existence of $y^a$ and $y^b$ is trivial: it suffices to consider the solutions of \eqref{Eq:ODEz} satisfying the right b.c.~at either $a$ or $b$. If they were linearly dependent, there would exist some complex number $c\ne 0$ such that $y^b= c y^a$ and we would get $ W_a(\bar{y^b},y^a) = W_b(\bar{y^b},y^a) = 0$ so that by \eqref{Eq:WaWb} we would have
	$$ z \bar{c} \int_a^b |y^a|^2 = \bar{z} \bar{c} \int_a^b |y^a|^2\;,$$
	and $z$ would thus necessarily be real, a contradiction. We thus set $w := W(y^a,y^b) \ne 0$.\\
	Let us now prove the identity of the statement for any function $g \in L^2(a,b)$ with compact support: by a density argument and the boundedness of the operators at stake, this is sufficient for our purpose. Set $f := (\cH_{a,b}^\alpha - z)^{-1} g$ so that $(\tau-z)f = g$. By Lemma \ref{Lemma:ODEz}, $f$ can be expressed as a linear combination of $y^a$ and $y^b$, namely for some complex numbers $\beta,\gamma$
	$$ wf(x) = y^b(x) \Big( \gamma + \int_a^x y^a(t) g(t) dt\Big) + y^a(x) \Big( \beta + \int_x^b y^b(t) g(t) dt\Big)\;.$$
	Since $g$ vanishes near $a$, we have $wf(x) =  y^b(x) \gamma +  y^a(x) \Big( \beta + \int_a^b y^b(t) g(t) dt\Big)$ near $a$ so that
	$$ W_a(y^a,wf) = \gamma W_a(y^a,y^b) = \gamma w\;.$$
	Recall that $f = (\cH_{a,b}^\alpha - z)^{-1} g$ so that it belongs to $\cD_{a,b}^\alpha$. Hence $f$ is colinear to $y^a$ at $a$ and this forces $\gamma = 0$. The same argument at $b$ shows that $\beta = 0$.\\
	Since the kernel lies in $L^2((a,b)^2,ds\otimes dt)$, the resolvent is Hilbert-Schmidt and therefore compact. Let us now show that the operator is bounded below. We concentrate on the case where $\alpha = 0$ for simplicity. An integration by parts shows that for any $f\in\cD^\alpha_{a,b}$
	$$ \langle f,-(f'-Bf)'\rangle = \langle f', f'-Bf\rangle\;,$$
	so that
	$$ \langle f, \cH^\alpha_{a,b} f\rangle = \langle f, -(f'-Bf)' - Bf'\rangle = \langle f',f'\rangle - 2\langle f',Bf \rangle $$
	Using the bound
	$$ 2\vert \langle f',Bf\rangle\vert \le \|f'\|_{L^2(a,b)}^2 + \sup_{t\in (a,b)} \vert B(t) \vert^2 \|f \|_{L^2(a,b)}^2\;,$$
	we deduce that for $M>0$ large enough
	$$ \langle f, \cH^\alpha_{a,b} f\rangle + M \|f\|_{L^2(a,b)}^2 > 0\;,$$
	which ensures that the spectrum is bounded below. Since $0$ is not an eigenvalue of the compact operator $(\cH_{a,b}^\alpha - z)^{-1}$, we deduce that the operator $\cH^\alpha_{a,b}$ has discrete spectrum which accumulates at $+\infty$.
\end{proof}

\subsection{Limit point case}

We pass to the limit point case at both boundaries, which in our setting holds only when $(a,b) = \R$. For simplicity, we write $\cD_{\R}^{\cdots}$ instead of $\cD_{-\infty,+\infty}^{\cdots}$. Let us start with a technical result.

\begin{lemma}
	For all $g,h \in \cD^{\max}_{\R}$, we have $W_{-\infty}(g,h) = 0$ and $W_{+\infty}(g,h) = 0$.
\end{lemma}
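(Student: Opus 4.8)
The plan is to reduce the statement to the self-adjointness of the minimal operator on $\R$. Concretely, the key point is that for every $z\in\C\setminus\R$ the expression $\tau-z$ has \emph{no} non-zero square-integrable solution on $\R$. Granting this, the deficiency spaces $\ker(\cH^{\max}_\R\mp i)$, which consist exactly of the $L^2(\R)$-solutions of $(\tau\mp i)\psi=0$, are trivial; since $\cH^{\min}_\R$ is a closed symmetric operator with $(\cH^{\min}_\R)^*=\cH^{\max}_\R$ by Proposition \ref{Prop:MinMax}, its deficiency indices both vanish and it is therefore self-adjoint, so that $\cD^{\min}_\R=\cD^{\max}_\R$. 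The conclusion then drops out of the second characterization of $\cD^{\min}_\R$ in Proposition \ref{Prop:MinMax}: every $u\in\cD^{\max}_\R=\cD^{\min}_\R$ satisfies $W_{-\infty}(v,u)=W_{+\infty}(v,u)=0$ for all $v\in\cD^{\max}_\R$, which is exactly the assertion (after renaming $v,u$ as $g,h$ and using that $\cD^{\max}_\R$ is stable under complex conjugation because $B$ is real).

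It remains to prove the absence of $L^2(\R)$-solutions, which I would do through the Weyl function. Since we are in the limit point case at $+\infty$, Subsection \ref{Subsec:Weyl} furnishes a unique (up to a scalar) solution $u_+=y^N+m_z(+\infty)y^D$ that is square-integrable at $+\infty$, and the symmetric analysis at $-\infty$ yields a unique square-integrable solution $u_-=y^N+m_z(-\infty)y^D$ at $-\infty$. A non-zero $\psi\in L^2(\R)$ solving $(\tau-z)\psi=0$ would be proportional to both $u_+$ and $u_-$, and comparing the data $(\psi(0),\psi'(0))$ this forces $m_z(+\infty)=m_z(-\infty)$. To rule this out I would compute $\Im m_z(\pm\infty)$. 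Fix $b>0$ and let $y_b=y^N+m(b,\beta)y^D$ be colinear to the real direction $e_\beta$ at $b$; then $W_b(\bar y_b,y_b)=0$ because $(y_b(b),y_b'(b))$ is a complex multiple of the real vector $e_\beta$. Feeding this into Green's formula \eqref{Eq:WaWb} on $(0,b)$ and using $W_0(\bar y_b,y_b)=2i\,\Im m(b,\beta)$ gives $\Im m(b,\beta)=\Im(z)\int_0^b|y_b|^2$. As $b\to\infty$ the circles shrink to the point $m_z(+\infty)$, and since $\int_0^b|y_b|^2$ stays bounded below away from $0$ (as $y_b(0)=1$), we obtain that $\Im m_z(+\infty)$ has the sign of $\Im z$. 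The same computation at $-\infty$, with Green's formula applied on $(b',0)$ and the orientation reversed, yields $\Im m(b',\beta)=-\Im(z)\int_{b'}^0|y_{b'}|^2$, so that $\Im m_z(-\infty)$ has the \emph{opposite} sign. Hence $m_z(+\infty)=m_z(-\infty)$ would force $\Im z=0$, a contradiction, and the first step is proved.

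The main obstacle is precisely this sign computation, and the subtlety is to carry it out \emph{without} already invoking the vanishing of the Wronskian at infinity (which is what we are proving). The way around the apparent circularity is to work at finite $b$, where $W_b(\bar y_b,y_b)=0$ is immediate from the reality of the boundary direction $e_\beta$, and only afterwards pass to the limit using that the Weyl circles collapse to a point in the limit point case. I expect the only genuinely delicate points to be the bookkeeping of the sign under the orientation reversal at $-\infty$ and the verification that $\int_0^b|y_b|^2$ does not degenerate to $0$ along the limit (guaranteed by $y_b(0)=1$ together with the local boundedness of solutions from Lemma \ref{Lemma:ODEz}); everything else is the routine functional-analytic backbone of the first paragraph.
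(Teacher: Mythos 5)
Your proposal is correct, but it takes a genuinely different route from the paper's. You reduce the lemma to essential self-adjointness of $\cH^{\min}_{\R}$: absence of non-zero $L^2(\R)$ solutions of $(\tau\mp i)\psi=0$ gives vanishing deficiency indices via $(\cH^{\min}_{\R})^*=\cH^{\max}_{\R}$, hence $\cD^{\min}_{\R}=\cD^{\max}_{\R}$, and the Wronskian identities then drop out of the second characterization in Proposition \ref{Prop:MinMax}; the absence of $L^2$ solutions is in turn proved by the classical Herglotz sign computation for the Weyl functions ($\Im m_z(+\infty)$ and $\Im m_z(-\infty)$ have opposite signs), carried out at finite $b$ to avoid circularity and passed to the limit along the shrinking Weyl circles. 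The paper instead argues by contradiction and \emph{locally at one endpoint}: assuming $W_{-\infty}(g,h)\neq 0$, it builds two distinct self-adjoint operators on the half-line $(-\infty,c)$ (boundary condition ``colinear with $g$'' versus ``colinear with $h$'' at $-\infty$, same condition at $c$) and shows their resolvents coincide, because in the limit point case the Green's kernel can only involve the unique solution $y^-$ square-integrable at $-\infty$ — a contradiction. Comparing the two: yours is the standard textbook path (limit point at both ends implies deficiency indices $(0,0)$), it needs von Neumann's extension theory and the quantitative $\Im m$ computation, and it inverts the paper's logical order — self-adjointness of $\cH$ (Proposition \ref{Prop:H}) comes out first and the lemma becomes a corollary; this is legitimate, since you only invoke Proposition \ref{Prop:MinMax}, the Weyl-circle material and the limit-point lemma of Subsection \ref{Subsec:Weyl}, all of which precede the statement. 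The paper's argument avoids deficiency theory and any sign bookkeeping, uses the limit-point property only at the endpoint under consideration, and stays entirely within the resolvent-kernel toolkit it develops anyway for Propositions \ref{Prop:Circle} and \ref{Prop:H}. The two technical debts you flag are real but fillable: for the non-degeneracy of $\int_0^b |y_b|^2$, note that $m_z(b,\beta)$ stays in the compact disk bounded by an initial Weyl circle, so $y_b\to y^N+m_z(+\infty)y^D$ locally uniformly and $\int_0^1 |y_b|^2$ converges to a strictly positive limit.
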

\begin{remark}
	This result is not at all specific to the operator at stake.
\end{remark}
\begin{proof}
	By contradiction, suppose there exist $g,h \in \cD^{\max}_{\R}$ such that $W_{-\infty}(g,h) \ne 0$. Since the Wronskian is bilinear, without loss of generality we can assume that $g$ and $h$ are real. Take some arbitrary point $c\in \R$ and consider the following two domains
	\begin{align*}
		\cD^1 &:= \{ f\in \cD^{\max}_{-\infty,c}: W_{-\infty}(f,g) = W_c(f,g) = 0\}\;,\\
		\cD^2 &:= \{ f\in \cD^{\max}_{-\infty,c}: W_{-\infty}(f,h) = W_c(f,g) = 0\}\;.
	\end{align*}
	In other words, we impose colinearity with $g$ at $-\infty$ in the first case, and colinearity with $h$ at $-\infty$ in the second case. Furthermore in both cases, we impose colinearity with $g$ at $c$ but this is arbitrary.\\
	These two sets are distinct since, for instance, $g$ belongs to $\cD^1$ but not to $\cD^2$. Now, following the very same arguments as in the proof of the last proposition, we see that the operators $\cH^1$ and $\cH^2$ naturally defined on these two domains are self-adjoint. We will get a contradiction by showing that their resolvents coincide.\\
	Fix $z\in \C\backslash\R$. Let $y^-$ be the unique (up to multiplicative factor) solution of \eqref{Eq:ODEz} that is square integrable near $-\infty$ (uniqueness is due to $\tau$ being limit point at $-\infty$). Let also $y^+$ be a solution of \eqref{Eq:ODEz} which is colinear to $g$ at $c$. The Wronskian $w$ of $y^-$ and $y^+$ is necessarily non-zero (otherwise they would be colinear, and the same argument as in the previous proof would imply that $z$ is real). Take $g_0\in L^2(\R)$ with compact support and set for $i\in\{1,2\}$, $f^i := (\cH^i - z)^{-1} g_0 \in \cD^i$. Necessarily $(\tau-z)f^i = g_0$ so that Lemma \ref{Lemma:ODEz} ensure that there exist $\gamma^i, \beta^i \in \C$ such that
	$$ w \, f^i(x) = y^+(x) \big( \gamma^i + \int_{-\infty}^x y^-(t) g_0(t) dt\big) + y^-(x) \big(\beta^i + \int_x^c y^+(t) g_0(t) dt \big)\;.$$
	For $x$ close to $-\infty$, since $g_0$ vanishes we must have
	$$ wf^i(x) = \gamma^i  y^+(x) + y^-(x) \big(\beta^i  + \int_{-\infty}^c y^+(t) g_0(t) dt \big)\;.$$
	As we are in the limit point case at $-\infty$, $y^+$ cannot be square integrable at $-\infty$. The square integrability of $f^i$ implies that $\gamma^i = 0$ and therefore, there exists $c^i$ such that
	$$ f^i(x) = c^i y^-(x)\;.$$
	This implies that $f^1$ and $f^2$ are both colinear to $y^-$ at $-\infty$, so $g$ and $h$ must be colinear to each other at $-\infty$, a contradiction.
\end{proof}

We thus deduce that $\cD^{\min}_{\R} = \cD^{\max}_{\R}$ and therefore the minimal and maximal operators coincide.

\begin{proposition}\label{Prop:H}
	The operator $\cH f = -f'' + f\xi$ on the domain $\cD = \cD^{\min}_{\R} = \cD^{\max}_{\R}$ is self-adjoint. The set $\cD^c_\R$ is a core for $\cH$, that is, the closure of $(\cH^c_\R,\cD^c_\R)$ equals $(\cH,\cD)$. For any $z\in \C\backslash \R$, let $y^\pm$ be the two solutions of \eqref{Eq:ODEz} that are square-integrable at $\pm \infty$. Necessarily these two solutions are linearly independent and they yield the following kernel for the resolvent of $\cH$: for any $g\in L^2(\R)$
	$$ (\cH - z)^{-1} g(s) = \int_\R G(z,s,t) g(t) dt\;,$$
	with
	$$ G(z,s,t) := \frac1{W(y^{-},y^{+})} \begin{cases} y^{+}(s) y^{-}(t) &\mbox{ if } s\ge t\;,\\
		y^{+}(t) y^{-}(s) &\mbox{ if } s \le t\;.
	\end{cases}
	$$
\end{proposition}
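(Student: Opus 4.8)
The plan is to read off all four assertions from the structure already assembled, treating self-adjointness, the core property, linear independence, and the resolvent kernel in turn.

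\emph{Self-adjointness and core.} First I would observe that the preceding lemma, giving $W_{\pm\infty}(g,h)=0$ for all $g,h\in\cD^{\max}_\R$, makes the Wronskian conditions in the description of $\cD^{\min}_\R$ from Proposition~\ref{Prop:MinMax} automatic, so $\cD^{\min}_\R=\cD^{\max}_\R$, as already recorded. Writing $\cH$ for this common operator, Proposition~\ref{Prop:MinMax} then yields $\cH^\ast=(\cH^{\min}_\R)^\ast=\cH^{\max}_\R=\cH$, i.e. $\cH$ is self-adjoint. The core property is immediate from the definitions: $\cH^{\min}_\R$ is by construction the closure of $(\cH^c_\R,\cD^c_\R)$, and since $\cH=\cH^{\min}_\R$, the set $\cD^c_\R$ is a core.

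\emph{Linear independence.} Fix $z\in\C\backslash\R$. Because $\tau$ is limit point at both infinities, there is a unique (up to scalar) solution $y^+$ of \eqref{Eq:ODEz} square integrable at $+\infty$ and likewise a unique $y^-$ at $-\infty$. Were they colinear, their common direction would give a solution $y$ of $\tau y=zy$ square integrable at both ends, hence $y\in L^2(\R)$ with $\tau y=zy\in L^2(\R)$, so $y\in\cD$; this would exhibit $z$ as a non-real eigenvalue of the self-adjoint operator $\cH$, which is impossible. Thus $y^-$ and $y^+$ are linearly independent and $w:=W(y^-,y^+)\neq0$.

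\emph{Resolvent kernel.} Here I would mimic the computation in the proof of Proposition~\ref{Prop:Circle}. It suffices to treat $g\in L^2(\R)$ with compact support, the general case following by density and boundedness of the resolvent. Put $f:=(\cH-z)^{-1}g\in\cD$, so $(\tau-z)f=g$; by Lemma~\ref{Lemma:ODEz} applied in the basis $\{y^-,y^+\}$, after absorbing the arbitrary base point into the free constants $\beta,\gamma$,
$$ w\,f(x)=y^+(x)\Big(\gamma+\int_{-\infty}^x y^-(t)g(t)\,dt\Big)+y^-(x)\Big(\beta+\int_x^{+\infty} y^+(t)g(t)\,dt\Big)\;. $$
For $x$ below the support of $g$ this reduces to $w f(x)=\gamma\,y^+(x)+y^-(x)\big(\beta+\int_\R y^+g\big)$; since $f$ is square integrable at $-\infty$ while $y^+$ is not (limit point case), necessarily $\gamma=0$, and the symmetric argument at $+\infty$, where $y^-$ fails to be square integrable, forces $\beta=0$. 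Substituting $\beta=\gamma=0$ gives exactly $f(s)=\int_\R G(z,s,t)g(t)\,dt$ with the stated two-branch kernel.

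The arguments are routine once the earlier machinery is in hand. The two points needing genuine care are the input from the last lemma — the vanishing of the Wronskians at $\pm\infty$ is precisely what upgrades the inclusion $\cD^{\min}_\R\subset\cD^{\max}_\R$ to an equality and thereby delivers self-adjointness without imposing any boundary condition — and the use of the limit point property to discard the free constants $\beta,\gamma$, which must be matched against the sign conventions of Lemma~\ref{Lemma:ODEz} to reproduce both branches of $G$.
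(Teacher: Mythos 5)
Your proof is correct and follows essentially the same route as the paper's (which simply invokes the preceding Wronskian lemma together with Proposition~\ref{Prop:MinMax} for self-adjointness, notes the core property ``by construction'', and identifies the resolvent kernel by the same computation as in the limit circle case, with square-integrability at $\pm\infty$ replacing the boundary conditions to kill the constants $\beta,\gamma$). The only variation is your linear-independence step: the paper rules out colinearity of $y^\pm$ via Green's formula \eqref{Eq:WaWb} forcing $z$ to be real, whereas you observe that a common direction would be an $L^2(\R)$ eigenfunction of the self-adjoint $\cH$ with non-real eigenvalue — an equally valid shortcut once self-adjointness is in hand.
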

\begin{proof}
	By the previous lemma, Proposition \ref{Prop:MinMax} and identity \eqref{Eq:WaWb}, we see that $\cH$ on $\cD$ is self-adjoint. By construction, $\cD^c_\R$ is a core for $(\cH,\cD)$. The identification of the kernel of the resolvent follows from exactly the same arguments as in the limit circle case.
\end{proof}

We can proceed to the proof of Theorem \ref{Th:Construction}. Let $\cH_L$ be the operator $\cH_{a,b}^\alpha$ with interval $(a,b) = (-L/2,L/2)$ and $\alpha = 0$. This operator and its resolvent $(\cH_L-z)^{-1}$ are operators defined on the Hilbert space $L^2(-L/2,L/2)$ whereas the limiting operator acts on $L^2(\R)$. This issue can be easily circumvented by viewing $\cH_L$ and  $(\cH_L-z)^{-1}$ as operators on $L^2(\R)$ by projecting $L^2(\R)$ on $L^2(-L/2,L/2)$ in the canonical way and by extending any element of $L^2(-L/2,L/2)$ into an element of $L^2(\R)$ by setting its values to $0$ outside $(-L/2,L/2)$. By a slight abuse of notation, we will still denote by $\cH_L$ and  $(\cH_L-z)^{-1}$ those extensions to $L^2(\R)$.

\begin{proof}[Proof of Theorem \ref{Th:Construction}]
 It remains to show that $\cH_L$ converges in the strong resolvent sense to $\cH$, namely for any $z\in\C\backslash\R$, we aim at showing that for all $g\in L^2(\R)$, $(\cH_L-z)^{-1}g$ converges in $L^2$ to $(\cH-z)^{-1}g$ as $L\to\infty$.\\
We could prove the convergence using the explicit expressions of the kernels of the resolvents together with the properties of the Weyl functions collected before. We prefer to present a shorter and more elegant argument due to Weidmann~\cite{Weidmann}. Let $f \in \cD^c_\R$. For $L$ large enough, the support of $f$ is fully included in $(-L/2,L/2)$ and we deduce that $f\in \cD_{-L/2,L/2}^c \subset \cD_{-L/2,L/2}$, the latter being the domain of $\cH_L$. In particular, $\cH_L f = \cH f$.\\
For any $z\in \C\backslash\R$ set $g := (\cH-z) f$ and note that
$$ (\cH_L-z)^{-1} g - (\cH-z)^{-1} g =  (\cH_L-z)^{-1} (\cH -z + z- \cH_L) (\cH-z)^{-1} g = (\cH_L-z)^{-1}(\cH-\cH_L) f = 0\;.$$
Recall that $\cH-z$ is a bijection between $\cD$ and $L^2(\R)$. By definition of a core, $\cD^c_\R \times (\cH-z) \cD^c_\R$ is dense in $\cD \times L^2(\R)$. Since in addition the operators at stake are bounded (by $1/|\Im(z)|$) we deduce the asserted strong resolvent convergence.
\end{proof}

\begin{remark}[Perturbation of the noise]
	The arguments presented in this section are very robust. Only two properties of the noise were used. First, we relied on the local integrability of $B(t) = \langle \xi, \un_{[0,t]}\rangle$: this remains obviously true if we replace $\xi$ by $\xi_\varepsilon$. Second, our operator is limit point at both infinities and we gave a rather ad-hoc argument to ensure this property. With $\xi_\varepsilon$, one can check that the operator remains limit point at both infinities by applying a general criterion for potentials that are function-valued~\cite[Cor. III.1.5]{Carmona}. Namely one needs to check that there exists a random constant $C > 0$ such that
	$$ \xi_\varepsilon(t) > - C (1+t^2)\;,\quad \forall t\in \R\;,$$
	and this property is not difficult to establish.
\end{remark}

\section{Anderson localization}\label{Sec:Expo}

Our main theorem deals with the spectrum of $\cH$ and the associated spectral measure. The definition of the former is standard: this is the complement of the set of values $E\in \C$ for which $\cH-E \;:\; \cD \to L^2(\R)$ admits a (necessarily bounded) inverse. Since $\cH$ is self-adjoint, the spectrum is a subset of $\R$.\\
On the other hand, the notion of spectral measure is probably less standard: in the following paragraph, we aim at clarifying this point for the non-expert reader. This requires to introduce the notion of projection-valued measure.\\

A projection-valued measure (p.v.m.) is a map $E$ from the Borel set $\mathcal{B}(\R)$ into the set of orthogonal projections on $L^2(\R)$ that satisfies (i) $E(\emptyset) = 0$ and $E(\R) = Id$ ; (ii) for any collection of disjoints Borel sets $I_n, n\ge 1$, it holds $E(\cup_n I_n) = \sum_{n} E(I_n)$ where the series converges strongly.\\

Given a p.v.m. $E$, one can introduce spectral measures associated to elements in $L^2(\R)$ in the following way. For $f\in L^2(\R)$, the measure
$$ \mu_f(d\lambda) := \langle f, E(d\lambda) f\rangle\;,$$
is usually referred to as the spectral measure associated to $f$. It is non-negative and its total mass equals the $L^2$-norm of $f$. Similarly one can define the complex spectral measure associated to $f,g \in L^2(\R)$:
$$ \mu_{g,f}(d\lambda) := \langle g, E(d\lambda) f\rangle\;.$$

Given a locally bounded, measurable function $h:\R\to\R$ one obtains a self-adjoint operator by setting\footnote{Actually the right-hand-side needs to be defined. When $h = \mathbf{1}_I$ is the indicator of a Borel set $I$, then this is simply the self-adjoint operator $E(I)$. This can be extended by density arguments, see for instance~\cite[Chap 3.1]{Teschl}}
\begin{equation}\label{Eq:Eh}
	E(h) := \int_\R h(\lambda) \,E(d\lambda)\;,
\end{equation}
defined on the domain $\{f \in L^2(\R),\; \int |h(\gl)|^2  \mu_f(d\lambda) < \infty\}$. Note that when $h: \R \to \C$, the operator can be defined again on this domain, but is not necessarily self-adjoint. Moreover when $h$ is bounded, $E(h)$ is defined on the whole $L^2(\R)$ and is bounded. \\

At several occasions, we will apply the following property (see e.g.~\cite[Th 3.1]{Teschl} ) linking the operator $E(h)$ to the spectral measures $\mu_{g,f}$: for any bounded measurable map $h:\R\to\C$ and any $f,g \in L^2$ we have
\begin{equation}\label{Eq:ELuELv}
	\langle g, E(h) f \rangle_{L^2} = \int_\gl h(\gl) \mu_{g,f}(d\lambda)  \;,
\end{equation}

The spectral theorem asserts that \emph{all} self-adjoint operators are of the form $E(h)$ for some p.v.m. $E$ and some locally bounded function $h$, and that \emph{uniqueness} of the p.v.m.~holds provided $h = id$. Therefore almost surely there exists a unique p.v.m.~$E$ satisfying
$$ \cH = \int_{\R} \lambda\, E(d\lambda)\;.$$

To prove that $E$ corresponds to the p.v.m of $\cH$, one can check that for all $z \in \C\backslash \R$, the following equality holds:
\begin{align*}
	E\Big(\frac{1}{\cdot -z}\Big) = (\cH - z)^{-1}\,,
\end{align*}
as it is easy to see that  the p.v.m. $E$ is characterized by the family $1/(\cdot -z)$, $z \in \C \backslash \R$.

\medskip

A \emph{spectral measure} is a non-negative Radon measure $\sigma$ on $\R$ that is equivalent to $E(d\lambda)$ in the following sense: for any Borel set $A$, $E(A) = 0$ if and only if $\sigma(A) = 0$. Of course, all spectral measures are equivalent to each other and we can talk of the pure point (resp.~absolutely continuous, singular) component of the spectrum of $\cH$ to denote the pure point (resp.~absolutely continuous, singular) component of any spectral measure.

\medskip

For a given $f \in L^2(\R)$, the spectral measure associated to $f$ is not a spectral measure in general as its support may be a strict subset of the spectrum of $\cH$. The general theory ensures the existence of some $f\in L^2$ such that $\mu_f$ is a spectral measure (its support coincides with the spectrum of $\cH$), and it thus suffices to study this single measure to determine the nature of the spectrum. However, in practice, this $f$ is not accessible. In the present setting of second order differential operator, one builds a spectral measure $\sigma$ by setting (formally)
$$ \sigma := \mu_{\delta_0} + \mu_{\delta_0'}\;,$$
where $\delta_0'$ is the derivative in the sense of distributions of $\delta_0$. This writing is completely formal as neither $\delta_0$ nor $\delta_0'$ lie in $L^2(\R)$, and the rigorous construction of this measure is a non-trivial, although classical, task that we will carry out by passing to the limit on the finite volume counterpart of the measure $\sigma$. \\

In finite volume, the p.v.m.~associated to $\cH_L$ simply writes:
\begin{equation}\label{Eq:EL}
	E_L(d\lambda) := \sum_{k\ge 1} \langle \varphi_{k,L},\cdot \rangle \varphi_{k,L}\, \delta_{\gl_{k,L}}(d\lambda)\;,
\end{equation}
where $(\lambda_{k,L},\varphi_{k,L})_k$ are the eigenvalues/eigenfunctions of $\cH_L$. We can define the finite volume counterpart of $\sigma$ by setting:
\begin{equation}\label{Eq:sigmaL}
	\sigma_L := \sum_{k\ge 1} (\varphi_{k,L}(0)^2 + \varphi_{k,L}'(0)^2) \delta_{\lambda_{k,L}}(d\lambda)\;.
\end{equation}
It is easy to check that $\sigma_L$ is indeed a spectral measure for $\cH_L$. The measure $\sigma$ is obtained by passing to the limit on $L$ as the following result shows.

%

\begin{proposition}\label{Prop:Sigma}
	Almost surely, as $L\to\infty$ the measure $\sigma_L$ converges vaguely to some random Radon measure that we denote by $\sigma$ and which is a spectral measure for $\cH$.
\end{proposition}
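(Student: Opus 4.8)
The plan is to obtain the vague convergence from the convergence of the Poisson integrals $\int \Im\frac{1}{\lambda-z}\,\sigma_L(d\lambda)$, $z=x+i\eta$ with $\eta>0$, rather than from the full Stieltjes transform. Indeed the total mass of $\sigma_L$ is infinite and grows at infinity (like $\sqrt\lambda$, by Weyl's law), so $\int(\lambda-z)^{-1}\sigma_L(d\lambda)$ diverges and only its imaginary part $\eta\int|\lambda-z|^{-2}\sigma_L(d\lambda)$ is finite. The first step is the identity
\begin{equation*}
	\int \frac{\sigma_L(d\lambda)}{|\lambda - z|^2} = \big\| (\cH_L - z)^{-1}\delta_0\big\|_{L^2}^2 + \big\| (\cH_L - z)^{-1}\delta_0'\big\|_{L^2}^2\;,\qquad z\in\C\backslash\R\;,
\end{equation*}
which follows by expanding the two right-hand functions in the orthonormal eigenbasis $(\varphi_{k,L})_k$ (Parseval) and recalling the definition \eqref{Eq:sigmaL} of $\sigma_L$ together with \eqref{Eq:EL}. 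Here $(\cH_L - z)^{-1}\delta_0 = G^0_{-L/2,L/2}(z,\cdot,0)$ and $(\cH_L - z)^{-1}\delta_0' = -\partial_t G^0_{-L/2,L/2}(z,\cdot,t)|_{t=0}$ are genuine elements of $L^2$, the resolvent regularising $\delta_0,\delta_0'$.

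Next I would show that these two $L^2$ functions converge, as $L\to\infty$, to $G(z,\cdot,0)$ and $-\partial_t G(z,\cdot,t)|_{t=0}$, with $G$ the infinite-volume kernel of Proposition \ref{Prop:H}. By Proposition \ref{Prop:Circle} the finite-volume kernel is built from the Dirichlet solutions $y^{a}_L, y^{b}_L$ of \eqref{Eq:ODEz} at $\mp L/2$; the limit-point analysis of Subsection \ref{Subsec:Weyl} shows that the associated Weyl points lie on the nested circles $\ccC(\pm L/2)$, which shrink to the limit points $m_z(\pm\infty)$, so that $y^a_L\to y^-$ and $y^b_L\to y^+$ locally uniformly together with their derivatives. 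Convergence on compact sets is then immediate, while the $L^2$-tails are controlled by the Weyl identity $\int_0^{L/2}|y^b_L|^2 = \Im(m^+_L)/\Im(z)$, where $m^+_L\in\ccC(L/2)$ is the Weyl point: since $m^+_L\to m_z(+\infty)$ one gets $\int_0^{L/2}|y^b_L|^2\to\int_0^\infty|y^+|^2$, and combined with local uniform convergence this upgrades to genuine $L^2(\R)$ convergence. I would thus obtain, for every $z\in\C\backslash\R$,
\begin{equation*}
	\int \frac{\sigma_L(d\lambda)}{|\lambda - z|^2} \xrightarrow[L\to\infty]{} \big\| G(z,\cdot,0)\big\|_{L^2}^2 + \big\| \partial_t G(z,\cdot,t)\big|_{t=0}\big\|_{L^2}^2 =: \Psi(z)\;.
\end{equation*}

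Since each $\sigma_L$ is non-negative, the convergence of the Poisson integrals $\eta\,\Psi_L(z)\to\eta\,\Psi(z)$ (with $\Psi_L$ the left-hand side above) is all that is needed. Bounding the mass of a window $[-R,R]$ by the Poisson integral at $i\eta$ gives $\sup_L\sigma_L([-R,R])<\infty$ for every $R$, hence tightness; and by the standard theory of the Poisson/Stieltjes transform of positive measures, convergence of the Poisson integrals for all $z$ forces the vague convergence of $\sigma_L$ to a unique positive Radon measure $\sigma$ whose Poisson integral is $\eta\,\Psi$. This produces the measure $\sigma$ of the statement.

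It remains to prove that $\sigma$ is a spectral measure for $\cH$, i.e.\ that $E(A)=0\Leftrightarrow\sigma(A)=0$ for the p.v.m.\ $E$ of $\cH$. The implication $E(A)=0\Rightarrow\sigma(A)=0$ is soft, $\sigma$ being dominated by $\mu_{\delta_0}+\mu_{\delta_0'}$ defined through the resolvent as above. The reverse implication $\sigma(A)=0\Rightarrow E(A)=0$ is equivalent to the pair $(\delta_0,\delta_0')$ being \emph{generating} for $\cH$: the smallest closed $\cH$-invariant subspace containing $(\cH-z)^{-1}\delta_0 = G(z,\cdot,0)$ and $(\cH-z)^{-1}\delta_0' = -\partial_t G(z,\cdot,t)|_{t=0}$, $z\in\C\backslash\R$, should be all of $L^2(\R)$. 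Testing against an $f$ orthogonal to this subspace and using the factorised form of $G$ from Proposition \ref{Prop:H}, this reduces to the totality in $L^2(0,+\infty)$ (resp.\ $L^2(-\infty,0)$) of the Weyl solutions $\{y^+_z:z\in\C\backslash\R\}$ (resp.\ $\{y^-_z\}$), i.e.\ to the completeness of the generalised eigenfunction expansion. I expect this to be the main obstacle: spectral-measure equivalence is \emph{not} preserved under vague limits, so it cannot be inherited from the (immediate) fact that each $\sigma_L$ is a spectral measure for $\cH_L$; the completeness must be established directly, for instance by showing that the finite-volume generalised Fourier transforms --- which are isometries precisely because $\sigma_L$ is a spectral measure for $\cH_L$ --- pass to an isometry in the limit, the delicate points being to control the $L$-dependent target spaces $L^2(\sigma_L)$ and to recover surjectivity in the limit.
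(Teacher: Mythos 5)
Your treatment of the convergence statement is correct and takes a genuinely different route from the paper's. The paper (Lemma \ref{Lemma:M}, proved in Subsection \ref{Subsec:Stieltjes}) handles the three entries $\xi_L,\zeta_L,\eta_L$ of the matrix measure separately, representing the \emph{full} Stieltjes transform of each as a resolvent-kernel value ($G_L(z,0,0)$, $\partial_s\partial_t G_L(z,0,0)$) via Mercer's theorem and then invoking the Herglotz-convergence Lemma \ref{Lemma:Stieltjes}; you work with the trace $\sigma_L$ alone and only with the Poisson integral, through the identity $\int|\lambda-z|^{-2}\sigma_L(d\lambda)=\|(\cH_L-z)^{-1}\delta_0\|^2+\|(\cH_L-z)^{-1}\delta_0'\|^2$. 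Your opening observation is accurate: because of the derivative component $\zeta_L$, the measure $\sigma_L$ fails \eqref{Eq:Radon}, so its Stieltjes transform is not absolutely convergent. This is actually an advantage of your route over the paper's: the paper's claim that $\sum_k(\lambda_{k,L}-z)^{-1}\varphi_{k,L}'(0)^2$ converges absolutely (Mercer applied to the \emph{real} part of the derivative kernel) is problematic, since the distributional kernel $\sum_k(\lambda_{k,L}-z)^{-1}\varphi_{k,L}'\otimes\varphi_{k,L}'$ differs from the continuous extension of $\partial_s\partial_tG_L$ off the diagonal by a Dirac mass \emph{on} the diagonal; only the imaginary part is a continuous kernel to which Mercer applies, and your formulation uses exactly and only that part. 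Your $L^2$ convergence of the kernels (local uniform convergence of the Weyl solutions plus tail control from the shrinking Weyl circles) is the same mechanism as \eqref{Eq:xiGz}--\eqref{Eq:mzmz}. One imprecision: pointwise convergence of Poisson integrals identifies the vague limit only up to a possible mass-at-infinity term $b\eta$, so the limit's Poisson integral need not equal $\eta\Psi$ without a further argument; uniqueness of the vague limit is unaffected, so your conclusion stands.

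The genuine gap is in the second half, and you have diagnosed it yourself: proving that $\sigma$ is a spectral measure for $\cH$ reduces (correctly, as you say) to the completeness of the generalized eigenfunction expansion, i.e.\ to the cyclicity of the pair $(\delta_0,\delta_0')$, and you leave this unproved, offering only a strategy. But this completeness \emph{is} the content of the hard direction, and it is where the bulk of the paper's work lies. The paper fills it as follows: it proves vague convergence of the full matrix $M_L$ (not just its trace); it passes the finite-volume Parseval identity to the limit to build an isometry $U:L^2(\R)\to L^2(\R,dM)$ (Proposition \ref{Prop:Parseval}), where the $L$-dependent target spaces are controlled by the uniform tail bound $\sum_{k:|\lambda_{k,L}|>\mu}\langle\varphi_{k,L},f\rangle^2\le\mu^{-2}\|\cH f\|^2$ for compactly supported $f$ in the domain, combined with the vague convergence of $M_L$; and it identifies $E:=U^{-1}\mathbf{1}_{\cdot}\,U$ with the p.v.m.\ of $\cH$ by matching resolvents, using the strong resolvent convergence $\cH_L\to\cH$ of Theorem \ref{Th:Construction} (Proposition \ref{propo:Epvm}). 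Both implications of the spectral-measure equivalence then drop out: $\mu_f(d\lambda)=(Uf(\gl))^\intercal M(d\lambda)(Uf(\gl))$ and absolute continuity of the entries of $M$ w.r.t.\ its trace give one direction, while the direction you flag as the obstacle uses precisely the surjectivity of $U$ (take $f=U^{-1}(\mathbf{1}_A v)$). Until you carry out this limiting-Parseval and resolvent-identification argument (or an equivalent completeness proof), your proposal establishes only the vague convergence part of Proposition \ref{Prop:Sigma}, not that the limit is a spectral measure for $\cH$.
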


Therefore, we will prove that $\cH$ is pure point by showing that $\sigma$ is a purely atomic measure. The main technical step in that direction is the following. Below we consider the processes $(r_{\gl,\theta_0},\theta_\gl)$ introduced in Subsection \ref{Subsec:Prufer} starting from $\theta_\gl(0) = \theta_0 \in [0,\pi)$ and $r_{\gl,\theta_0}(0) = 1$. Recall that $\theta_\gl$ satisfies an autonomous equation, while the equation satisfied by $r_{\gl,\theta_0}$ depends on the evolution of $\theta_\gl$. We need the following decay estimate.

\begin{proposition}\label{Prop:Expo}
	For any bounded interval $\Delta \subset \R$ and for any $\epsilon > 0$, there exists $q = q(\Delta,\epsilon) > 0$ such that
	$$ \E\bigg[ \int_\Delta \inf_{\theta_0\in [0,\pi)} \sup_{t\in \R} \Big(r_{\gl,\theta_0}(t) e^{(\gamma_\gl-\epsilon)|t|}  + \frac1{r_{\gl,\theta_0}(t)} e^{-(\gamma_\gl+\epsilon)|t|} \Big)^q d\sigma(\gl) \bigg] < \infty\;.$$
\end{proposition}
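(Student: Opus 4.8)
The plan is to transfer the estimate to the finite volume operators $\cH_L$, to establish it uniformly in $L$, and then to pass to the limit using the vague convergence of Proposition \ref{Prop:Sigma}. For $L>0$ I introduce the truncated quantity
\[ M^{(L)}_\gl := \inf_{\theta_0\in[0,\pi)} \sup_{|t|\le L/2} \Big(r_{\gl,\theta_0}(t) e^{(\gamma_\gl-\epsilon)|t|} + \tfrac1{r_{\gl,\theta_0}(t)} e^{-(\gamma_\gl+\epsilon)|t|}\Big)\;. \]
Using the continuity of $(\gl,\theta_0,t)\mapsto r_{\gl,\theta_0}(t)$ and a compactness argument in $\theta_0$, each $M^{(L)}_\gl$ is continuous in $\gl$, is non-decreasing in $L$, and converges pointwise as $L\to\infty$ to the integrand $M_\gl$ of the statement. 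It therefore suffices to bound $\E\big[\int_\Delta (M^{(L)}_\gl)^q\, d\sigma_L\big]$ uniformly in $L$: for fixed $L_0$ and $L\ge L_0$ one has $M^{(L_0)}\le M^{(L)}$, so the Portmanteau inequality applied to the continuous function $(M^{(L_0)}_\gl)^q$ (cut off by a continuous bump slightly outside $\Delta$) along the a.s.\ vague convergence $\sigma_L\to\sigma$, combined with Fatou's lemma, gives $\E\big[\int_\Delta (M^{(L_0)}_\gl)^q d\sigma\big]\le\liminf_L\E\big[\int_{\Delta'} (M^{(L)}_\gl)^q d\sigma_L\big]$ for a slightly enlarged $\Delta'$; letting $L_0\to\infty$ and invoking monotone convergence then yields the claim.

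The probabilistic content lies in the finite volume bound. Since by \eqref{Eq:sigmaL} we have $\sigma_L=\sum_k R_k(0)^2\,\delta_{\lambda_{k,L}}$ with $R_k(t):=(\varphi_{k,L}(t)^2+\varphi_{k,L}'(t)^2)^{1/2}$, and since $\sigma_L$ charges only eigenvalues, I would bound the infimum defining $M^{(L)}_{\lambda_{k,L}}$ by the single choice $\theta_0=\alpha_{k,L}$, the Pr\"ufer phase of $\varphi_{k,L}$ at $0$. By linearity of \eqref{Eq:ODEz}, the solution issued from this phase and unit modulus coincides on $[-L/2,L/2]$ with $\varphi_{k,L}/R_k(0)$, which yields the key identity $r_{\lambda_{k,L},\alpha_{k,L}}(t)=R_k(t)/R_k(0)$ for $|t|\le L/2$. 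Consequently
\[ \int_\Delta (M^{(L)}_\gl)^q\, d\sigma_L \le \sum_{k:\lambda_{k,L}\in\Delta} R_k(0)^2\,(S_{1,k}+S_{2,k})^q\;, \]
where $S_{1,k}=\sup_{|t|\le L/2}\tfrac{R_k(t)}{R_k(0)}e^{(\gamma-\epsilon)|t|}$ and $S_{2,k}=\sup_{|t|\le L/2}\tfrac{R_k(0)}{R_k(t)}e^{-(\gamma+\epsilon)|t|}$, with $\gamma=\gamma_{\lambda_{k,L}}$.

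To control each summand I would use the a priori exponential decay of the eigenfunctions away from their localisation centre $x_k\in[-L/2,L/2]$, the point where $R_k$ is maximal. The estimates of \cite{DL_Crossover} (in the spirit of \eqref{Eq:Furstenberg} run outward from $x_k$) give, for any $\epsilon'>0$, random constants with finite uniform moments such that $R_k(t)$ decays at rate $\gamma\pm\epsilon'$ on either side of $x_k$; combined with the normalisation $\int\varphi_{k,L}^2=1$ this pins down $R_k(x_k)$ and bounds $R_k(0)$ above and below by $R_k(x_k)e^{-(\gamma\mp\epsilon')|x_k|}$. Inserting these into $S_{1,k},S_{2,k}$, the suprema become $\sup_t e^{h(t)}$ for a piecewise linear exponent maximised near $t=x_k$, giving $S_{1,k}\lesssim e^{(2\gamma-\epsilon+\epsilon')|x_k|}$ and $S_{2,k}\lesssim e^{2\epsilon'|x_k|}$ up to random constants. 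Taking $q\le1$ so that $(S_{1,k}+S_{2,k})^q\le S_{1,k}^q+S_{2,k}^q$, and using $R_k(0)^2\lesssim e^{-2(\gamma-\epsilon')|x_k|}$, each summand is dominated by a random constant times $e^{-\delta|x_k|}$ with $\delta:=2(\gamma-\epsilon')-(2\gamma-\epsilon+\epsilon')q$, which is positive once $q$ is small enough (this is where the dependence $q=q(\Delta,\epsilon)$ enters; one takes $\gamma=\inf_{\gl\in\Delta}\gamma_\gl>0$, positive and continuous). Summing over $k$ and taking expectations, the stationarity of the white noise bounds the expected contribution of the eigenfunctions centred in any unit window uniformly in $L$, so the geometric factor $e^{-\delta|x_k|}$ makes the series converge uniformly in $L$, as required.

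The main obstacle is precisely the a priori decay estimate invoked in the third step: controlling $R_k$ on \emph{both} sides of its centre at the sharp Lyapunov rate, uniformly in $L$ and with integrable random constants, and doing so through the singular white noise where the classical ODE comparison arguments are unavailable. This is the delicate input borrowed from \cite{DL_Crossover}; the remaining work—the exact-solution identity for the eigenfunction phase, the balancing of the exponentially small spectral weight $R_k(0)^2$ against the exponentially large $S_{1,k}$ (which forces $q$ to be small), and the two limit passages—is then comparatively routine.
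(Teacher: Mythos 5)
Your opening reduction (truncate in $t$, monotone convergence, then vague convergence of $\sigma_L$ plus Fatou to transfer the bound to finite volume) is exactly the paper's first step, and bounding the infimum over $\theta_0$ by the Pr\"ufer phase of the eigenfunction at $0$, so that $r_{\lambda_{k,L},\alpha_{k,L}}(t)=R_k(t)/R_k(0)$, is also how the paper proceeds. The divergence --- and the gap --- lies in how you then control
$$ \E\Big[\sum_{k:\,\lambda_{k,L}\in\Delta} R_k(0)^2\,(S_{1,k}+S_{2,k})^q\Big]\;. $$
The a priori input you invoke, namely that each eigenfunction decays from its localisation centre $x_k$ at rate $\gamma_\gl\pm\epsilon'$ with random constants having uniform moments, is not available and cannot be obtained by ``running \eqref{Eq:Furstenberg} outward from $x_k$''. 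Furstenberg/Oseledec-type statements hold, for each \emph{fixed deterministic} $\lambda$, on an event of full probability that depends on $\lambda$; the eigenvalues $\lambda_{k,L}$ are random and correlated with the noise, so these results cannot be evaluated at $\gl=\lambda_{k,L}$ --- the paper stresses precisely this obstruction at the end of Section 2 (the conclusion fails for all $\lambda$ simultaneously). In effect, the estimate you borrow is essentially equivalent to the proposition you are trying to prove, and neither~\cite[Lemma 7.2]{DL_Crossover} nor~\cite[Lemma 7.3]{DL_Crossover} supplies it in that form: those lemmas concern the diffusion (or its bridge) at a fixed $\gl$, started from a fixed point, not the eigenfunctions themselves.

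Second, even granting that input, your concluding step (``stationarity bounds the expected contribution of eigenfunctions centred in any unit window uniformly in $L$, so the geometric factor $e^{-\delta|x_k|}$ makes the series converge'') is not a proof: the number of eigenvalues with $\lambda_{k,L}\in\Delta$ grows linearly in $L$, the random constants attached to different $k$ are neither independent nor controlled jointly, and converting an expectation of a sum over \emph{random} eigenvalues into a computable quantity is exactly the technical crux. The paper resolves both difficulties at once with the trace formula of~\cite[Prop 6.1]{DL_Crossover}, which rewrites
$$ \E\Big[\sum_{k} \big(\varphi_{k,L}(0)^2+\varphi_{k,L}'(0)^2\big)\,G_A(\gl_{k,L},\varphi_{k,L},\varphi_{k,L}')\Big] $$
as an integral over deterministic $(\gl,\theta)$ of bridge-diffusion expectations weighted by the transition densities $p_{\gl,L/2}$; the randomness of the eigenvalues is thereby traded for an integral in $\gl$, and what remains are exponential moment bounds for the time-reversed, de-conditioned phase/modulus diffusion, supplied by~\cite[Lemmas 7.2 and 7.3]{DL_Crossover}. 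Without this formula, or a substitute playing the same role, your sum over $k$ cannot be estimated, so the proposal as written does not constitute a proof.
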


With the last two propositions at hand, and anticipating the result of Proposition \ref{Prop:Support}, the proof of Theorem \ref{Th:Expo} is relatively simple.

\begin{proof}[Proof of Theorem \ref{Th:Expo}]
	Up to taking a countable collection of bounded intervals $\Delta_n$ that covers $\R$, the bound of Proposition \ref{Prop:Expo} shows that, almost surely, $\sigma$-almost all $\gl \in \R$ is an eigenvalue of $\cH$, and since the collection of eigenvalues is at most countable, the measure $\sigma$ is pure point. By Proposition \ref{Prop:Sigma}, we deduce that a.s.~$\cH$ is pure point. 
	
	Since $\cH$ is limit point the following property holds. Almost surely, for every $\gl \in \R$ there is at most one $\theta_0\in [0,\pi)$ such that $\| r_{\gl,\theta_0}(\cdot)\|_{L^2(\R)} < \infty$, and if such a $\theta_0$ exists then the corresponding $y_\gl$ belongs to $\cD(\cH)$, and therefore $\gl$ is an eigenvalue of $\cH$ and $y_\gl$ is an eigenfunction associated to $\gl$. Combined with the previous paragraph, it implies that almost surely each eigenvalue has multiplicity one.
	By Proposition \ref{Prop:Expo}, almost surely, for every eigenvalue $\lambda \in \R$ there is a random constant $C > 0$ such that
	$$  \sup_{t\in \R} \bigg( \frac{\big(\varphi_\gl(t)^2 + \varphi_\gl'(t)^2\big)^{1/2}}{\big(\varphi_\gl(0)^2 + \varphi_\gl'(0)^2\big)^{1/2}} e^{(\gamma_\gl-\epsilon)|t|}  + \frac{\big(\varphi_\gl(0)^2 + \varphi_\gl'(0)^2\big)^{1/2}}{\big(\varphi_\gl(t)^2 + \varphi_\gl'(t)^2\big)^{1/2}} e^{-(\gamma_\gl+\epsilon)|t|} \bigg) < C\;.$$
	This implies the existence of $C'>0$ such that for all $t\in \R$
	$$ \frac1{C'} e^{-(\gamma_\gl+\epsilon)|t|} \le \Big( \varphi_\gl(t)^2 + \varphi_\gl'(t)^2 \Big)^{1/2} \le C' e^{-(\gamma_\gl-\epsilon)|t|}\;,$$
	and, since $\epsilon > 0$ is arbitrary the desired asymptotic behavior of the eigenfunctions is proved. The fact that the spectrum equals $\R$ will be proven in Proposition \ref{Prop:Support}. The proof of the theorem is therefore complete.
\end{proof}

The next three subsections are devoted to the proof of Proposition \ref{Prop:Sigma}. We proceed in two steps. First, we present the spectral analysis (matrix spectral measure, p.v.m., unitary transformation) of the operator in finite volume. Second, we show that the infinite volume limit of the matrix spectral measure allows to build the p.v.m.~of $\cH$, and we then prove the proposition. The presentation could be made shorter but we believe that a detailed presentation of the finite volume case greatly clarifies the discussion. The fourth subsection is devoted to the proof of Proposition \ref{Prop:Expo}.

\subsection{Spectral analysis in finite volume}\label{Subsec:FV}

For $z \in \C$, denote by $y^N_z(\cdot)$ and $y^D_z(\cdot)$ the Neumann and Dirichlet solutions of the ODE \eqref{Eq:ODEz} on $\R$, that is, the solutions that satisfy
$$ y^N_z(0) = 1\;,\quad (y^N_z)'(0) = 0\;,\qquad \mbox{and}\qquad y^D_z(0) = 0\;,\quad (y^D_z)'(0) = 1\;.$$
For every eigenvalue $\gl_{k,L}$ of $\cH_L$, the associated ($L^2$-normalized) eigenfunction can be written
\begin{equation}\label{Eq:phikDN}
\varphi_{k,L}(\cdot) = \varphi_{k,L}(0) y^N_{\gl_{k,L}}(\cdot) + \varphi_{k,L}'(0) y^D_{\gl_{k,L}}(\cdot)\;.
\end{equation}
Recall from Proposition \ref{Prop:Circle} that the spectrum of $\cH_L$ is discrete and that $\gl_{k,L} \to +\infty$ as $k\to\infty$. We then set
$$ \xi_L := \sum_{k\ge 1} \varphi_{k,L}(0)^2 \delta_{\gl_{k,L}}\;,\quad \zeta_L := \sum_{k\ge 1} \varphi_{k,L}'(0)^2 \delta_{\gl_{k,L}}\;,\quad \eta_L := \sum_{k\ge 1} \varphi_{k,L}(0)\varphi_{k,L}'(0) \delta_{\gl_{k,L}}\;.$$
These Radon measures allow to define the matrix spectral measure
$$M_L := \begin{pmatrix} \xi_L & \eta_L\\
\eta_L & \zeta_L
\end{pmatrix}\;.$$
It is easy to check that for any Borel set $A\subset \R$, we have $\xi_L(A),\zeta_L(A)\ge 0$ and $2|\eta_L(A)| \le \xi_L(A) + \zeta_L(A)$. As a consequence, each measure appearing in $M_L$ is absolutely continuous w.r.t.~the trace of $M_L$, which is nothing but the measure $\sigma_L$ introduced in \eqref{Eq:sigmaL}.

We then consider the set $L^2(\R,dM_L)$ of all measurable functions $F:\R\to\R^2$ that satisfy
$$ \langle F,F \rangle_{L^2(dM_L)} := \int_\R F(\gl)^\intercal \; M_L(d\gl) F(\gl) < \infty\;.$$

\begin{remark}
There is a little ambiguity in this definition as the meaning of the integral on the r.h.s.~must be specified. Define the matrix $N_L$ of the Radon-Nikodym derivatives of $M_L$ w.r.t.~$\sigma_L$. This is a non-negative symmetric matrix for $\sigma_L$-almost all $\gl$. Then
$$ \int_\R F(\gl)^\intercal \; M_L(d\gl) F(\gl) := \int_\R F(\gl)^\intercal \; N_L(\gl) F(\gl) \sigma_L(d\gl) < \infty\;.$$
The integral on the r.h.s.~is interpreted in the usual sense: this is the integral of an $\R$-valued function against the measure $\sigma_L$ on $\R$. It can be checked that $\langle \cdot,\cdot \rangle_{L^2(dM_L)}$ defines an inner product that turns $L^2(\R,dM_L)$ into a Hilbert space, see~\cite[Prop. I.4.5]{Carmona} for instance.
\end{remark}

We now introduce a unitary map from $L^2(-L/2,L/2)$ into $L^2(\R,dM_L)$, which should be thought of as a Fourier transform. For any function $f$, we set
\begin{equation}\label{Eq:FNL}
	F^N_L(\gl) := \int_{-L/2}^{L/2} f(x) y^N_\gl(x) dx\;,\quad F^D_L(\gl) := \int_{-L/2}^{L/2} f(x) y^D_\gl(x) dx\;.
\end{equation}
\begin{lemma}\label{Lemma:UL}
The map $U_L : L^2(-L/2,L/2) \to L^2(\R,dM_L)$ defined by
$$ U_L f = \begin{pmatrix} F^N_{L} \\ F^D_L\end{pmatrix}\;,$$
is unitary.
\end{lemma}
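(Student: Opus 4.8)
The plan is to show directly that $U_L$ is an isometry with full range, exploiting the fact that the matrix measure $M_L$ is purely atomic and \emph{rank one}, with atoms carried by the eigenvalues of $\cH_L$.

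First I would record the structure of $M_L$. Writing $v_k := (\varphi_{k,L}(0), \varphi_{k,L}'(0))^\intercal$, the definitions of $\xi_L,\zeta_L,\eta_L$ give
\begin{equation*}
	M_L(d\gl) = \sum_{k\ge 1} v_k v_k^\intercal \, \delta_{\gl_{k,L}}(d\gl)\;,
\end{equation*}
so that for any $F \in L^2(\R, dM_L)$,
\begin{equation*}
	\langle F, F\rangle_{L^2(dM_L)} = \sum_{k\ge 1} \big| v_k^\intercal F(\gl_{k,L})\big|^2\;.
\end{equation*}
In particular the inner product depends on $F$ only through the scalars $v_k^\intercal F(\gl_{k,L})$. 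Note also that each $v_k \ne 0$: if $\varphi_{k,L}(0) = \varphi_{k,L}'(0) = 0$ then the uniqueness part of Lemma \ref{Lemma:ODEz} would force $\varphi_{k,L}\equiv 0$.

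The computational heart is the identity, valid for every $f$ and every $k$,
\begin{equation*}
	v_k^\intercal (U_L f)(\gl_{k,L}) = \varphi_{k,L}(0)\, F^N_L(\gl_{k,L}) + \varphi_{k,L}'(0)\, F^D_L(\gl_{k,L}) = \langle \varphi_{k,L}, f\rangle_{L^2}\;,
\end{equation*}
where the second equality comes from inserting the definition \eqref{Eq:FNL} of $F^N_L, F^D_L$ and recognising the decomposition \eqref{Eq:phikDN} of $\varphi_{k,L}$ along $y^N_{\gl_{k,L}}$ and $y^D_{\gl_{k,L}}$. Since $\cH_L$ has compact resolvent (Proposition \ref{Prop:Circle}), its $L^2$-normalised eigenfunctions $(\varphi_{k,L})_{k\ge 1}$ form a complete orthonormal basis of $L^2(-L/2, L/2)$. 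Combining the above identity with Parseval's identity yields
\begin{equation*}
	\|U_L f\|^2_{L^2(dM_L)} = \sum_{k\ge 1} \big| \langle \varphi_{k,L}, f\rangle\big|^2 = \|f\|^2_{L^2(-L/2, L/2)}\;,
\end{equation*}
so $U_L$ is an isometry. Surjectivity then follows from the same identity together with completeness: given $F \in L^2(\R, dM_L)$, set $a_k := v_k^\intercal F(\gl_{k,L})$; the norm formula shows $(a_k) \in \ell^2$, so $f := \sum_k a_k \varphi_{k,L}$ defines an element of $L^2(-L/2, L/2)$ with $\langle \varphi_{k,L}, f\rangle = a_k$. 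By the central identity, $v_k^\intercal (U_L f)(\gl_{k,L}) = a_k = v_k^\intercal F(\gl_{k,L})$ for all $k$, and since the $L^2(dM_L)$-norm only sees these scalars, $U_L f = F$ in $L^2(\R, dM_L)$. Hence $U_L$ is onto, and therefore unitary.

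The only genuine subtlety — and the step I would be most careful about — is the handling of the space $L^2(\R, dM_L)$ itself: because $M_L$ is matrix-valued with rank-one atoms, two functions are identified as soon as they agree after projection onto $v_k$ at each atom $\gl_{k,L}$. This is precisely what makes $L^2(\R, dM_L)$ collapse onto $\ell^2(\N)$ and lets the argument go through cleanly. I would therefore make explicit (as above) that the norm depends only on $v_k^\intercal F(\gl_{k,L})$, so that equality in $L^2(dM_L)$ means equality of these scalars, and that $v_k \ne 0$ so that no information is lost; the remainder is the bookkeeping of Parseval's identity.
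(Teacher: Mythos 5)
Your proof is correct, and its computational core --- the identity $v_k^\intercal (U_Lf)(\gl_{k,L}) = \langle \varphi_{k,L}, f\rangle$ obtained from the decomposition \eqref{Eq:phikDN}, combined with Parseval's identity for the orthonormal basis $(\varphi_{k,L})_k$ --- is exactly the computation in the paper's proof of Lemma \ref{Lemma:UL}. The difference is what comes after: the paper stops once the isometry $\|U_Lf\|_{L^2(dM_L)} = \|f\|_{L^2(-L/2,L/2)}$ is established, leaving surjectivity entirely implicit, whereas you prove it. You identify $L^2(\R,dM_L)$ with $\ell^2(\N)$ via the rank-one atoms $M_L(\{\gl_{k,L}\}) = v_k v_k^\intercal$, check that $v_k\neq 0$ (by the uniqueness statement of Lemma \ref{Lemma:ODEz}), and then exhibit a preimage of an arbitrary $F$ by resynthesizing $f = \sum_k a_k \varphi_{k,L}$ with $a_k := v_k^\intercal F(\gl_{k,L})$. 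Since ``unitary'' means surjective isometry, this extra step is genuinely needed for the statement as phrased, so your writeup is the more complete of the two; your explicit discussion of the quotient structure of $L^2(\R,dM_L)$ (equality there means equality of the scalars $v_k^\intercal F(\gl_{k,L})$) is also a point the paper only gestures at in a remark. One small caveat: the rank-one description of the atoms uses that each eigenvalue of $\cH_L$ is simple. This does hold here --- two eigenfunctions for the same $\gl$ both satisfy the separated boundary condition at $a=-L/2$, so a nontrivial linear combination vanishes together with its derivative at $a$ and hence vanishes identically by Lemma \ref{Lemma:ODEz} --- but it deserves a line; even without simplicity your argument would survive with atoms of rank at most two.
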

\begin{proof}
From the eigen-decomposition associated to the operator $\cH_L$ we find
$$ \|f\|_{L^2(-L/2,L/2)}^2 = \sum_{k} \langle \varphi_{k,L}, f\rangle^2\;,$$
and we use the decomposition \eqref{Eq:phikDN} of $\varphi_{k,L}$ on the Neumann and Dirichlet solutions to obtain
\begin{align*}
\langle \varphi_{k,L}, f\rangle^2 &= F^N_L(\gl_{k,L})^2 \xi_L(\{\gl_{k,L}\}) + F^D_L(\gl_{k,L})^2 \zeta_L(\{\gl_{k,L}\}) + 2  F^N_L(\gl_{k,L})F^D_L(\gl_{k,L}) \eta_L(\{\gl_{k,L}\})\\
&= U_Lf(\gl_{k,L})^\intercal \; M_L(\{\gl_{k,L}\}) U_Lf(\gl_{k,L})\;.
\end{align*}
Summing up over $k$, we obtain
$$ \|f\|_{L^2(-L/2,L/2)}^2 = \langle U_Lf , U_Lf \rangle_{L^2(dM_L)}\;.$$
\end{proof}

We then introduce the map $E_L$ from $\cB(\R)$ into the set of bounded operators on $L^2(-L/2,L/2)$ by associating to any $I \in \cB(\R)$ the orthogonal projection
$$ E_L(I) = U_L^{-1} \mathbf{1}_I U_L\;.$$

\begin{lemma}
	The map $E_L$ is the unique p.v.m.~associated to the operator $\cH_L$.
\end{lemma}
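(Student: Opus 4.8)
The plan is to show that the unitary map $U_L$ \emph{diagonalizes} $\cH_L$, conjugating it into the operator of multiplication by the independent variable on $L^2(\R,dM_L)$; the p.v.m.~$E_L$ is then just the pullback under $U_L$ of the canonical ``multiplication by indicators'' p.v.m.~on $L^2(\R,dM_L)$, and the conclusion follows from the spectral theorem recalled earlier. First I would record that $E_L$ is indeed a p.v.m.: since $U_L$ is unitary and $F\mapsto \mathbf{1}_I F$ is an orthogonal projection on $L^2(\R,dM_L)$, each $E_L(I)=U_L^{-1}\mathbf{1}_I U_L$ is an orthogonal projection, and the normalizations $E_L(\emptyset)=0$, $E_L(\R)=Id$ together with the strong countable additivity $\mathbf{1}_{\cup_n I_n}=\sum_n \mathbf{1}_{I_n}$ transfer verbatim through conjugation by $U_L$.

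The heart of the argument is the identity $U_L\cH_L U_L^{-1}=M_\gl$, where $M_\gl$ denotes multiplication by $\gl$, i.e.~$(M_\gl F)(\gl):=\gl\, F(\gl)$. The key structural fact is that $M_L$ is purely atomic, supported on $\{\gl_{k,L}\}$, and that $M_L(\{\gl_{k,L}\})=v_k v_k^\intercal$ is the rank-one matrix built from $v_k:=(\varphi_{k,L}(0),\varphi_{k,L}'(0))^\intercal$. Consequently $\langle F,F\rangle_{L^2(dM_L)}=\sum_k (v_k\cdot F(\gl_{k,L}))^2$, so the Hilbert space only sees the scalars $v_k\cdot F(\gl_{k,L})$ and two elements coincide in $L^2(\R,dM_L)$ as soon as these scalars agree. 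By the decomposition \eqref{Eq:phikDN} of $\varphi_{k,L}$ on the Neumann and Dirichlet solutions one has $v_k\cdot (U_Lf)(\gl_{k,L})=\langle\varphi_{k,L},f\rangle$ (this is precisely the computation underlying Lemma \ref{Lemma:UL}). Hence, to establish $U_L\cH_L f=M_\gl U_Lf$, it suffices to check the scalars, and using the self-adjointness of $\cH_L$ and $\cH_L\varphi_{k,L}=\gl_{k,L}\varphi_{k,L}$ one gets
$$ v_k\cdot(U_L\cH_Lf)(\gl_{k,L})=\langle\varphi_{k,L},\cH_Lf\rangle=\gl_{k,L}\langle\varphi_{k,L},f\rangle=\gl_{k,L}\,v_k\cdot(U_Lf)(\gl_{k,L})\;. $$
The domains match as well, both being $\{F:\int_\R \gl^2\,F(\gl)^\intercal M_L(d\gl)F(\gl)<\infty\}$ up to $U_L$, which on the left is exactly $\{f:\sum_k \gl_{k,L}^2\langle\varphi_{k,L},f\rangle^2<\infty\}=\cD(\cH_L)$.

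With the diagonalization in hand I would conclude as follows. On $L^2(\R,dM_L)$ the multiplication operator satisfies $M_\gl=\int_\R \gl\,\mathbf{1}_{d\gl}$, so conjugating by $U_L$ yields $\cH_L=U_L^{-1}M_\gl U_L=\int_\R \gl\,E_L(d\gl)$; thus $E_L$ is a p.v.m.~representing $\cH_L$, and its uniqueness is exactly the uniqueness clause of the spectral theorem recalled above (the case $h=id$). The only delicate points are this ``atomic reduction'' --- that elements of $L^2(\R,dM_L)$ are pinned down, in the Hilbert-space sense, solely through the rank-one contractions $v_k\cdot F(\gl_{k,L})$ --- together with the bookkeeping that the operator domains coincide. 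If one prefers to avoid the unbounded-operator domains entirely, the same reduction applied to the \emph{bounded} resolvent gives, for $z\in\C\backslash\R$, the identity $v_k\cdot U_L(\cH_L-z)^{-1}g(\gl_{k,L})=(\gl_{k,L}-z)^{-1}\langle\varphi_{k,L},g\rangle$ (here using that the adjoint of the resolvent is $(\cH_L-\bar z)^{-1}$ and that $\gl_{k,L}\in\R$), whence $E_L\big(1/(\cdot-z)\big)=(\cH_L-z)^{-1}$; by the resolvent characterization recalled at the beginning of this section this again identifies $E_L$ as the unique p.v.m.~of $\cH_L$.
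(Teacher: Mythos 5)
Your proposal is correct, but its primary route is genuinely different from the paper's. The paper never diagonalizes $\cH_L$ itself: it works exclusively with the bounded resolvent, expanding $\langle g,(\cH_L-z)^{-1}f\rangle$ in the eigenbasis, rewriting the sum as $\int(\gl-z)^{-1}(U_Lg)^\intercal M_L(d\gl)(U_Lf)$ via the measure identity \eqref{Eq:EML}, and then invoking \eqref{Eq:ELuELv} together with the fact, recalled before the lemma, that a p.v.m.\ is characterized by the family $1/(\cdot-z)$, $z\in\C\backslash\R$ --- this is exactly the variant you sketch in your closing paragraph. Your main argument instead establishes the full unitary equivalence $U_L\cH_L U_L^{-1}=M_\gl$, domains included, and appeals to the uniqueness clause of the spectral theorem for $h=id$. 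Both proofs ultimately rest on the same two facts: the atomic, rank-one structure $M_L(\{\gl_{k,L}\})=v_kv_k^\intercal$ (which implicitly uses simplicity of the eigenvalues, automatic here for separated boundary conditions, and is likewise implicit in the paper's Lemma \ref{Lemma:UL}) and the identity $v_k\cdot(U_Lf)(\gl_{k,L})=\langle\varphi_{k,L},f\rangle$ coming from \eqref{Eq:phikDN}; so the computations are close cousins. What the paper's route buys: it involves only bounded operators, so no domain bookkeeping is needed (your identification $\cD(\cH_L)=\{f:\sum_k\gl_{k,L}^2\langle\varphi_{k,L},f\rangle^2<\infty\}$ is standard but rests on completeness of the eigenbasis, i.e.\ on the compactness of the resolvent from Proposition \ref{Prop:Circle}), and, more importantly, it is a template that survives the passage to infinite volume: in Proposition \ref{propo:Epvm} there is no eigenbasis and no atomic $M$, so the ``atomic reduction'' at the heart of your diagonalization has no analogue there, whereas the resolvent computation of \eqref{eq:resspectral} does --- this robustness is precisely the reason the paper states it chose this construction. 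What your route buys: a cleaner structural statement ($U_L$ is a genuine spectral representation of $\cH_L$, conjugating it to multiplication by $\gl$), from which the lemma becomes a literal instance of the spectral theorem's uniqueness assertion rather than a consequence of the resolvent characterization.
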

\begin{remark}
	Of course $E_L$ could have been defined through \eqref{Eq:EL}. However, in the infinite volume case this will not be possible anymore since, at this point of the proof, one does not have a spectral decomposition of the operator $\cH$. Consequently, we preferred to present a construction of the p.v.m.~which is robust when passing to infinite volume.
\end{remark}
\begin{proof}
	It is elementary to check that $E_L$ is indeed a p.v.m. To conclude, it suffices to check that for $z\in \C\backslash\R$
	$$ (\cH_L-z)^{-1} = E_L\Big(\frac1{\cdot - z}\Big)\;,$$
	where the notation $E_L(h)$ for some bounded measurable function $h:\R\to\C$ was introduced in \eqref{Eq:Eh}.\\
	Fix $f,g \in L^2(-L/2,L/2)$. First note the following equality between signed measures
	\begin{equation}\label{Eq:EML}
		\langle g, E_L(d\gl) f \rangle_{L^2(-L/2,L/2)} = (U_Lg(\gl))^\intercal M_L(d\gl) (U_Lf(\gl))\;.
	\end{equation}
	(It suffices to evaluate the two measures on any given Borel set).
	
	This being given, we compute
	\begin{align}
		\langle g,(\cH_L-z)^{-1}f \rangle_{L^2(-L/2,L/2)} &= \sum_{k} \frac1{\lambda_k-z} \langle g,\varphi_{k,L}\rangle \langle f,\varphi_{k,L}\rangle \notag \\
		&= \int_\R \frac1{\lambda-z} \begin{pmatrix} G^N_L & G^D_L
		\end{pmatrix} \begin{pmatrix} \xi_L(d\gl) & \eta_L(d\gl) \notag \\
			\eta_L(d\gl) & \zeta_L(d\gl)
		\end{pmatrix} \begin{pmatrix} F^N_L  \\
			F^D_L
		\end{pmatrix} \notag \\
		&= \int_\R \frac1{\lambda-z} (U_L g)^\intercal \; M_L(d\gl) (U_L f)\notag \\
		&= \int_\R \frac1{\lambda-z} \langle g, E_L(d\gl) f\rangle_{L^2(-L/2,L/2)}\;, \label{eq:resspectral}
	\end{align}
	where the last line follows from \eqref{Eq:EML}.\\

Using moreover the property \eqref{Eq:ELuELv} with the function $1/(\cdot -z)$, we get:
	\begin{align*}
		\langle g,(\cH_L-z)^{-1}f \rangle_{L^2(-L/2,L/2)} &= \langle g , E_L\Big(\frac1{\cdot - z}\Big) f\rangle_{L^2(-L/2,L/2)}\;.
	\end{align*}
	Since it holds for all $f,g \in L^2(-L/2,L/2)$, this allows to conclude.
\end{proof}

\subsection{The projection-valued measure in infinite volume}

To extend the previous construction to infinite volume, we need the analogue of $M_L$ for the operator $\cH$. Note that the definition of $M_L$ rests on the facts that the spectrum of $\cH_L$ is discrete and locally finite; these two properties being themselves a consequence of the compactness of the resolvents of $\cH_L$. At that point of the proof, we do not have any such information about $\cH$ (and actually, we will see that, although its spectrum is pure point, its eigenvalues are dense in $\R$ and its resolvents are not compact) so that there is no direct analogue of $M_L$ in infinite volume. However, the limit as $L\to\infty$ of $M_L$ is a natural candidate.

\begin{lemma}\label{Lemma:M}
Almost surely, the matrix spectral measure $M_L$ converges vaguely to some limit $M = \begin{pmatrix} \xi & \eta\\
\eta & \zeta
\end{pmatrix}$. This limit satisfies the following property: for any Borel set $A\subset \R$,
$$ 2 |\eta(A)| \le \xi(A) + \zeta(A)\;,\quad\text{ and } \quad \xi(A), \zeta(A) \ge 0\;.$$
Moreover $\sigma := \xi + \eta$ is a Radon measure.
\end{lemma}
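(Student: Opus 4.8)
The plan is to establish vague convergence of the matrix-valued measures $M_L$ by controlling each scalar component through the complex spectral measures encoded in the resolvent, and then to read off the inequalities and the Radon property for the limit.

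\textbf{Step 1: Reduce to convergence of Stieltjes transforms.} The natural object to track is the matrix Stieltjes (Cauchy) transform of $M_L$, namely
\begin{equation*}
	\int_\R \frac{1}{\gl - z}\, M_L(d\gl)\;,\quad z\in \C\backslash\R\;.
\end{equation*}
By the computation \eqref{eq:resspectral} carried out in the finite-volume section, the entries of this matrix are exactly the quantities $\langle y^N_{\bar z}, (\cH_L - z)^{-1} y^N_{\bar z}\rangle$ and its Neumann/Dirichlet variants, evaluated through the kernel $G_{a,b}^\alpha$; equivalently, they are boundary values of the Weyl--Titchmarsh $m$-functions of $\cH_L$. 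First I would write each entry of $\int (\gl - z)^{-1} M_L(d\gl)$ explicitly in terms of the resolvent kernel at the point $0$ (with Neumann/Dirichlet test data), so that the matrix transform becomes a concrete function of $z$ built from $y^N_z, y^D_z$ and the Weyl solutions $y^{\pm}_z$ of $\cH_L$. The key input is then that, by Theorem \ref{Th:Construction}, $\cH_L \to \cH$ in the strong resolvent sense, and in the limit-point case the finite-volume Weyl solutions converge to the genuine square-integrable solutions $y^\pm_z$ of $\cH$ on $\R$. This yields pointwise convergence on $\C\backslash\R$ of each entry of the matrix Stieltjes transform to a limiting analytic matrix function; equivalently, $M_L$ converges to a matrix of measures $M$ in the vague sense by the standard Stieltjes-transform criterion (a Herglotz/Nevanlinna argument, one entry at a time).

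\textbf{Step 2: Control tightness/local uniform bounds and pass inequalities to the limit.} Vague convergence from convergence of Stieltjes transforms requires a locally uniform bound ensuring no mass escapes; I would obtain this from the finite-volume bound on the imaginary part of the $m$-function, i.e. from $\Im \int (\gl - z)^{-1} M_L(d\gl)$, which is controlled uniformly in $L$ for $z$ in a fixed compact of $\C\backslash\R$ because the resolvent norms are bounded by $1/|\Im z|$. The pointwise inequalities $\xi_L(A), \zeta_L(A)\ge 0$ and $2|\eta_L(A)|\le \xi_L(A)+\zeta_L(A)$ are preserved under vague limits: for any fixed nonnegative continuous compactly supported $\phi$ one has $\int \phi\, d\xi_L \ge 0$ and $2|\int \phi\, d\eta_L|\le \int \phi\,(d\xi_L + d\zeta_L)$, and passing to the limit gives the same for $\xi,\zeta,\eta$, whence the stated inequality for all Borel $A$ by regularity of the measures. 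This simultaneously shows $\xi,\zeta\ge 0$ and that $\eta$ is dominated, so $\eta$ is a signed Radon measure and $\sigma = \xi + \eta$ is a (signed, but in fact the relevant combination) Radon measure; the finiteness of $\sigma$ on compacts follows from the local uniform bound established above.

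\textbf{Main obstacle.} The delicate point is upgrading strong resolvent convergence of $\cH_L\to\cH$ to convergence of the \emph{boundary data at the single point} $0$ that define the entries of $M_L$, since strong resolvent convergence a priori only controls $L^2$-pairings, not pointwise values of kernels. The hard part will be justifying that the finite-volume Weyl solutions (and hence the resolvent kernel $G_{a,b}^\alpha$ evaluated near $0$) converge to their infinite-volume counterparts $y^\pm_z$; this is precisely the content of the limit-point analysis of Subsection \ref{Subsec:Weyl}, where the nested Weyl circles $\ccC(b)$ shrink to the point $m_z(+\infty)$. I would make this rigorous by expressing the entries of the matrix Stieltjes transform directly through $m_z(\pm\infty)$ and invoking the convergence of the Weyl $m$-functions of $\cH_L$ to those of $\cH$ as $L\to\infty$, which is the standard mechanism behind strong resolvent convergence of Sturm--Liouville operators on exhausting intervals. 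Once this convergence of $m$-functions is in hand, everything else is routine bookkeeping.
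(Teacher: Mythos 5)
Your overall route coincides with the paper's: identify the Stieltjes transforms of the entries of $M_L$ with resolvent-kernel data at the origin, obtain pointwise convergence of these transforms from the convergence of the Weyl $m$-functions of Subsection \ref{Subsec:Weyl} (the paper never actually needs Theorem \ref{Th:Construction} here, so your fallback from strong resolvent convergence to $m$-function convergence is exactly what happens), and conclude vague convergence entry by entry. But two genuine gaps remain, and they are precisely where the paper's real work lies. First, the identification step is unjustified: \eqref{eq:resspectral} only expresses $\langle g,(\cH_L-z)^{-1}f\rangle$ for $f,g\in L^2$, whereas the entries of $\int(\lambda-z)^{-1}M_L(d\lambda)$ are the series $\sum_k(\lambda_{k,L}-z)^{-1}\varphi_{k,L}(0)^2$, $\sum_k(\lambda_{k,L}-z)^{-1}\varphi_{k,L}'(0)^2$, etc., i.e.\ formally the pairings with $\delta_0$ and $\delta_0'$, which are not in $L^2$. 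Equating them with $G_L(z,0,0)$ and $\partial_s\partial_t G_L(z,0,0)$ requires showing that the eigenfunction expansion \eqref{Eq:GL} (and its derivative) converges \emph{absolutely and pointwise} at $(0,0)$; this is exactly what the paper's Mercer argument (Lemma \ref{Theorem:Mercer}) provides, applied separately to the imaginary and real parts (the real part needs splitting off finitely many negative terms), and, for $\zeta_L$, only after showing that $\Im\partial_s\partial_t(\cH_L-z)^{-1}$ is a nonnegative operator via $H^1_0$ pairings. Without this, even the finiteness condition \eqref{Eq:Radon} making the Stieltjes transform of $\xi_L$ well defined is unproven. Relatedly, your tightness bound ``because the resolvent norms are bounded by $1/|\Im z|$'' fails: operator-norm bounds control $L^2$ pairings, not kernel values at a point; the correct local mass bound comes from $\xi_L(K)\le C_{K,z}\,\Im G_L(z,0,0)/\Im(z)$, which is under control precisely because $G_L(z,0,0)$ converges.

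Second, the ``standard Stieltjes-transform criterion'' you invoke is not standard in this setting, because $\xi_L,\zeta_L$ are not finite measures: pointwise convergence of the transforms does not by itself produce a vague limit whose transform equals the limit function, since mass can escape to infinity and contribute an affine term $a+bz$ in the Herglotz--Nevanlinna representation \eqref{Eq:f}. The paper handles this with the bespoke Lemma \ref{Lemma:Stieltjes} (whose statement the authors say they could not find in the literature), which works with the finite measures $(1+\lambda^2)^{-1}\xi_L(d\lambda)$, uses Helly selection, and identifies the subsequential limits through the triple $(f,a,b')$; the accompanying verification \eqref{Eq:xiGz} of the convergence of $\int(1+\lambda^2)^{-1}\xi_L(d\lambda)$ is part of this package. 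Your plan treats this as routine bookkeeping, which it is not. By contrast, your Step 2 argument for passing the inequalities $\xi(A),\zeta(A)\ge 0$ and $2|\eta(A)|\le\xi(A)+\zeta(A)$ to the limit by testing against nonnegative compactly supported continuous functions is correct and matches what the paper leaves implicit.
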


This is a standard result of the literature. A complete proof can be found in~\cite[Chap.2 Sec.6]{Levitan}: it consists in showing tightness of the collection of measures $(M_L)_{L>0}$ by some ad-hoc argument (that needs to be adapted to cover our case where the potential is singular) and then identifying the limit. In~\cite[Chap.3 p.105]{Carmona}, an alternative proof, based on the convergence of the resolvents, is suggested (for the operator on a half-line with a non-singular potential): we have not found the details anywhere and it happens that there are a few subtleties along the way so we present a complete proof of Lemma \ref{Lemma:M} in Subsection \ref{Subsec:Stieltjes} following this alternative approach.\\

With the matrix $M$ at hand, we proceed as in the previous subsection to identify a spectral representation of $\cH$. We introduce the Hilbert space $L^2(\R,dM)$ of all measurable functions $F:\R \to \R^2$ satisfying
$$ \langle F,F\rangle_{L^2(dM)} := \int_{\R} F(\lambda)^\intercal \; dM(\lambda) F(\lambda) < \infty\;.$$

This being given, we introduce a unitary transform from $L^2(\R)$ to $L^2(\R,dM)$. However, the infinite volume setting induces the very same difficulties as for the Fourier transform: namely one cannot define $F^N$ and $F^D$ by \eqref{Eq:FNL} with $(-L/2,L/2)$ replaced by $\R$, and so one needs to proceed by approximation. Recall the maps 
\begin{align*}
	f\mapsto (F^N_n,F^D_n) := \Big(\int_{-n/2}^{n/2} f(x)y_\gl^N(x)dx,  \int_{-n/2}^{n/2} f(x)y_\gl^D(x)dx\Big)\;,
\end{align*}
introduced in \eqref{Eq:FNL}.

\begin{proposition}\label{Prop:Parseval}
For any $f\in L^2(\R)$, $(F^N_n,F^D_n)_{n\ge 1}$ is a Cauchy sequence in $L^2(\R,dM)$. Its limit is denoted $(F^N,F^D)$. The map $U:L^2(\R) \to L^2(\R,dM)$ defined by
$$ Uf = \begin{pmatrix} F^N \\ F^D\end{pmatrix}\;,$$
is unitary.
\end{proposition}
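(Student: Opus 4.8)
The plan is to transfer the finite-volume Plancherel identity of Lemma \ref{Lemma:UL} to the limit $L\to\infty$ by exploiting the vague convergence $M_L\to M$ of Lemma \ref{Lemma:M}. The first observation is that for a function $f$ supported in $(-m/2,m/2)$ the truncated transforms $(F^N_n,F^D_n)$ of \eqref{Eq:FNL} are constant once $n\ge m$, so their convergence is automatic and the real content is the isometry. Fixing such an $f$ and any $L\ge m$, Lemma \ref{Lemma:UL} gives
$$ \|f\|_{L^2(\R)}^2 = \int_\R (F^N,F^D)(\gl)\, M_L(d\gl)\, (F^N,F^D)(\gl)^\intercal =: \int_\R G(\gl)\, dM_L\;, $$
where $G$ is a fixed scalar quadratic form built from the transforms $F^N,F^D$, which are entire in $\gl$ since $y^N_\gl(x),y^D_\gl(x)$ depend analytically on $\gl$. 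I would then let $L\to\infty$ and replace $M_L$ by $M$.

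The main obstacle is precisely this passage to the limit: $G$ is \emph{not} compactly supported in $\gl$, so the vague convergence $M_L\to M$ does not apply directly. I would resolve this with a uniform-in-$L$ tail bound obtained by differentiating once. For $f\in\cD^c_\R$, Green's formula \eqref{Eq:WaWb} together with $\tau y^N_\gl=\gl y^N_\gl$ and the vanishing of the boundary Wronskians (as $f$ is compactly supported) shows that the transform of $\tau f$ equals $\gl$ times that of $f$; applying Lemma \ref{Lemma:UL} to $\tau f\in L^2$ then yields $\int_\R \gl^2 G\, dM_L = \|\tau f\|_{L^2}^2$ uniformly in $L$, whence $\int_{|\gl|>R} G\, dM_L \le R^{-2}\|\tau f\|_{L^2}^2$. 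Splitting $\int_\R=\int_{|\gl|\le R}+\int_{|\gl|>R}$, the compact part converges by vague convergence and continuity of $G$ (choosing $\pm R$ outside the countable set of atoms of $M$), while the tail is uniformly negligible; this gives the Parseval identity $\|f\|_{L^2}^2=\int_\R G\, dM$ for $f\in\cD^c_\R$. For a general compactly supported $f\in L^2$, I would approximate it in $\cD^c_\R$ (possible by Lemma \ref{Lemma:Density}) and control the tail uniformly in $L$ by bounding the approximation error with the full finite-volume isometry $\int_\R(\cdot)\,dM_L=\|\cdot\|_{L^2}^2$, thereby extending the identity to all compactly supported $f$.

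With Parseval in hand for compactly supported functions, the Cauchy claim is immediate: for $m>n$ the difference $(F^N_m-F^N_n,\,F^D_m-F^D_n)$ is the transform of $f\,\mathbf{1}_{\{n/2<|x|\le m/2\}}$, so its $L^2(dM)$-norm equals $\|f\,\mathbf{1}_{\{n/2<|x|\le m/2\}}\|_{L^2(\R)}$, which tends to $0$ as $n,m\to\infty$. Defining $Uf$ as the limit and passing to the limit in Parseval shows $\|Uf\|_{L^2(dM)}=\|f\|_{L^2(\R)}$, so $U$ is a well-defined isometry on all of $L^2(\R)$.

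It then remains to prove that $U$ is onto, which is the most delicate point. Since $U$ is isometric its range is closed, so it suffices to show that the range is dense in $L^2(\R,dM)$; equivalently, that any $G\in L^2(dM)$ orthogonal to the range vanishes. Testing orthogonality against the transforms of compactly supported $f$ and using Fubini, this reduces to showing that the inverse transform $VG(x):=\int_\R (y^N_\gl(x),y^D_\gl(x))\, M(d\gl)\, G(\gl)$ satisfies $VG=0\Rightarrow G=0$ in $L^2(dM)$. I would establish this by checking that $V$ is the adjoint $U^\ast$ and that $UV=\mathrm{Id}$ on the dense class of bounded $G$ with compact support in $\gl$ — a relation that again descends from the unitarity of the finite-volume maps $U_L$ through the vague convergence — or, alternatively, by verifying that this candidate inverse reproduces the resolvent kernel of Proposition \ref{Prop:H}, which pins the range down to the whole space and yields unitarity.
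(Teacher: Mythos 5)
Your proof of the isometry is essentially the paper's own argument. The paper likewise splits the eigenfunction expansion $\|f\|_{L^2}^2=\sum_k\langle\varphi_{k,L},f\rangle^2$ into the contributions of $|\gl_{k,L}|\le\mu$ and of $|\gl_{k,L}|>\mu$, bounds the tail by $\mu^{-2}\|\cH f\|_{L^2}^2$ uniformly in $L$ (via $\langle\varphi_{k,L},f\rangle=\gl_{k,L}^{-1}\langle\varphi_{k,L},\cH_L f\rangle$, which is exactly your Green's-formula identity ``the transform of $\tau f$ is $\gl$ times the transform of $f$'' contracted against $(\varphi_{k,L}(0),\varphi_{k,L}'(0))$ through \eqref{Eq:phikDN}), and passes to the limit on $[-\mu,\mu]$ by vague convergence of $M_L$, choosing $\pm\mu$ outside the atoms. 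On one point you are in fact more careful than the paper: you prove Parseval for \emph{all} compactly supported $L^2$ functions, which is what one actually needs to identify the abstract density-extension of $U$ with the limit of the truncated transforms $(F^N_n,F^D_n)$, and hence to obtain the Cauchy statement as phrased; the paper proves the identity only on $\cD^c_\R$ and extends $U$ abstractly, leaving that identification implicit.

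The real divergence is surjectivity. You are right that it is the delicate point, and it is worth knowing that the paper's proof does not address it at all: it establishes the isometry and stops, although unitarity is genuinely used later (in Proposition \ref{propo:Epvm}, the definition $E(I)=U^{-1}\mathbf{1}_I U$ requires $\mathbf{1}_I Uf$ to lie in the range of $U$). Your sketch, however, is not yet a proof. The claim that $UV=\mathrm{Id}$ ``descends from the unitarity of the finite-volume maps $U_L$ through the vague convergence'' is precisely the kind of step vague convergence cannot deliver: $L^2(dM_L)$ and $L^2(dM)$ are different Hilbert spaces, ranges of operators are not stable under such limits, and it is not even obvious that $VG\in L^2(\R)$ for bounded compactly supported $G$. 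Beware also of a trap in the orthogonality reduction: from the resolvent identity and the isometry one only gets that $U(\cH-z)^{-1}f-(\cdot-z)^{-1}Uf$ is orthogonal to the range of $U$, which cannot be directly combined with $G\perp\mathrm{range}(U)$ to force $G=0$. The standard way to close the argument is your second suggestion, carried out concretely: define $V$ on compactly supported elements of $L^2(dM)$, show $VG\in L^2(\R)$, and verify $U(VG)=G$ by a Green's-formula computation using the explicit kernel of Proposition \ref{Prop:H} and the convergence of the Weyl functions (the classical Weyl--Titchmarsh--Kodaira completeness argument). That step requires real work and would need to be written out, both in your proof and, strictly speaking, in the paper's.
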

Note that $F^N, F^D$ are formally given by
$$ F^N(\gl) := \lim_{n\to\infty} \int_{-n}^{n} f(x) y^N_\gl(x) dx\;,\quad F^D(\gl) := \lim_{n\to\infty} \int_{-n}^{n} f(x) y^D_\gl(x) dx\;.$$
\begin{proof}
Let $f \in \cD^c_\R$. Such a function is compactly supported and belongs to $L^2(\R)$. Hence $F^N_n$ and $F^D_n$ are independent of $n$ provided $n$ is large enough, and therefore their limits $F^N$ and $F^D$ are well-defined, continuous functions of $\gl$. Our first goal is to show that $Uf$ belongs to $L^2(dM)$ and that its norm coincides with the norm of $f$.\\
Provided the support of $f$ belongs to $(-L/2,L/2)$, the eigen-decomposition associated to the operator $\cH_L$ yields
$$ \|f\|_{L^2(\R)}^2 = \sum_{k} \langle \varphi_{k,L}, f\rangle^2\;.$$
Fix $\mu > 0$ and split the above sum into the contributions coming from $\lambda_{k,L} \in [-\mu,\mu]$ and the rest. Regarding the latter, we observe that since $\varphi_{k,L}$ and $f$ belong to the domain of $\cH_L$, self-adjunction yields
$$\langle \varphi_{k,L}, f\rangle = \frac1{\lambda_{k,L}} \langle \cH_L \varphi_{k,L}, f\rangle = \frac1{\lambda_{k,L}} \langle \varphi_{k,L}, \cH_L f\rangle\;.$$
Since in addition $f$ belongs to the domain $\cD$ of $\cH$, we have $\cH_L f = \cH f \in L^2(\R)$. Consequently
\begin{align*}
\sum_{k:|\lambda_{k,L}|>\mu} \langle \varphi_{k,L}, f\rangle^2 &\le \frac1{\mu^2}\sum_{k:|\lambda_{k,L}|>\mu} \langle \varphi_{k,L}, \cH f\rangle^2\\
&\le \frac1{\mu^2} \sum_{k}\langle \varphi_{k,L}, \cH f\rangle^2 = \frac1{\mu^2} \|\cH f\|_{L^2(\R)}^2\;.
\end{align*}
This quantity goes to $0$ as $\mu\to\infty$. Let us now deal with the contribution coming from $\lambda_{k,L} \in [-\mu,\mu]$. By the same argument as in the proof of Lemma \ref{Lemma:UL} and since $U_L f = U f$ we get
\begin{align*}
\sum_{k:|\lambda_{k,L}| \le \mu} \langle \varphi_{k,L}, f\rangle^2 &= \sum_{k:|\lambda_k| \le \mu} U f(\gl_{k,L})^\intercal \;  M_L(\{\gl_{k,L}\}) U f(\gl_{k,L})\\
&= \int_{\gl: |\gl| \le \mu}  U f(\lambda)^\intercal \; M_L(d\lambda) U f(\lambda)\;.
\end{align*}
Whenever $\mu$ and $-\mu$ are not atoms of the limiting measures $\xi,\zeta,\eta$, the vague convergence of $M_L$ to $M$ ensures that this last quantity converges to
\begin{align*}
\int_{\gl: |\gl| \le \mu} U f(\lambda)^\intercal \; M(d\lambda) U f(\lambda)= \big\| \mathbf{1}_{|\gl| \le \mu} Uf  \big\|^2_{L^2(dM)} \;.
\end{align*}
Henceforth, we have proven that
$$ \Big\vert \|f\|_{L^2(\R)}^2 - \| \mathbf{1}_{|\gl| \le \mu} Uf \|^2_{L^2(dM)} \Big\vert \le \frac1{\mu^2} \|\cH f\|_{L^2(\R)}^2\;.$$
Passing to the limit on an appropriate sequence of $\mu \to \infty$, we thus deduce that $Uf \in L^2(dM)$ and that
$$  \|f\|_{L^2(\R)} = \| Uf \|_{L^2(dM)}\;.$$

It remains to extend this identity to the whole set $L^2(\R)$. This is an easy consequence of the density of $\cD^c_\R$ in $L^2(\R)$. Indeed for any $f\in L^2(\R)$ there exists a sequence $f_n$ of elements of $\cD^c_\R$ that converges to $f$. The linearity of $U$ ensures that
$$  \|f_n - f_m\|_{L^2(\R)} = \| Uf_n - Uf_m \|_{L^2(dM)}\;,$$
so that $(Uf_n)_n$ is a Cauchy sequence in $L^2(dM)$ and we can easily conclude that the norm of its limit coincides with the norm of $f$.
\end{proof}

We can now introduce the map $E$ that associates to any Borel set $I$ the orthogonal projection
$$ E(I) := U^{-1} \mathbf{1}_I\, U \;.$$

\begin{proposition}\label{propo:Epvm}
Almost surely, $E$ is the unique p.v.m.~associated to $\cH$.
\end{proposition}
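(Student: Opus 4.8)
The plan is to follow the finite-volume argument verbatim: first observe that $E$ is a p.v.m., then identify it as the p.v.m.\ of $\cH$ by matching resolvents, which simultaneously yields uniqueness. That $E$ is a p.v.m.\ is immediate from its definition $E(I)=U^{-1}\mathbf{1}_I U$: since $U$ is unitary (Proposition \ref{Prop:Parseval}) and multiplication by $\mathbf{1}_I$ is an orthogonal projection on $L^2(\R,dM)$, each $E(I)$ is an orthogonal projection on $L^2(\R)$, and the normalisation $E(\emptyset)=0$, $E(\R)=\mathrm{Id}$ together with strong countable additivity transfer directly from the multiplication operators $\mathbf{1}_I$. As recalled in the excerpt, the spectral theorem produces a unique p.v.m.\ for $\cH$, and this p.v.m.\ is characterised by the family $1/(\cdot-z)$, $z\in\C\backslash\R$. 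Hence it suffices to prove the resolvent identity
$$ (\cH - z)^{-1} = E\Big(\frac1{\cdot - z}\Big)\;,\qquad z\in\C\backslash\R\;, $$
after which, by the definition of $E(h)$ through \eqref{Eq:Eh} and the analogue of \eqref{Eq:ELuELv}, the right-hand side satisfies $\langle g, E(1/(\cdot-z)) f\rangle = \int_\R \frac1{\lambda-z}(Ug)^\intercal M(d\lambda)(Uf)$.

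The strategy is to pass to the limit $L\to\infty$ in the finite-volume identity \eqref{eq:resspectral}. I would fix $f,g\in\cD^c_\R$; then for $L$ large enough their supports lie in $(-L/2,L/2)$, so that $U_Lf=Uf$ and $U_Lg=Ug$ are continuous functions independent of $L$ (as in the proof of Proposition \ref{Prop:Parseval}), and \eqref{eq:resspectral} reads
$$ \langle g,(\cH_L-z)^{-1}f\rangle = \int_\R \frac1{\lambda-z}(Ug)^\intercal M_L(d\lambda)(Uf)\;. $$
On the left-hand side, Theorem \ref{Th:Construction} gives convergence of $\cH_L$ to $\cH$ in the strong resolvent sense, so $(\cH_L-z)^{-1}f\to(\cH-z)^{-1}f$ in $L^2(\R)$ and the left-hand side converges to $\langle g,(\cH-z)^{-1}f\rangle$. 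Once the right-hand side is shown to converge to $\int_\R \frac1{\lambda-z}(Ug)^\intercal M(d\lambda)(Uf)$, the resolvent identity follows for $f,g\in\cD^c_\R$, and then for all $f,g\in L^2(\R)$ by density of $\cD^c_\R$ and boundedness (by $1/|\Im z|$) of both sides.

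The delicate point, and where I expect the main obstacle to lie, is the convergence of the right-hand side: the integrand $\frac1{\lambda-z}(Ug)^\intercal(Uf)$ is continuous but \emph{not} compactly supported, whereas Lemma \ref{Lemma:M} only provides vague convergence $M_L\to M$. I would split the integral at $|\lambda|\le\mu$ and $|\lambda|>\mu$. For the bulk, choosing $\pm\mu$ outside the (at most countable) set of atoms of $\xi,\zeta,\eta$, vague convergence of the three scalar components of $M_L$ against the continuous integrand gives $\int_{|\lambda|\le\mu}(\cdot)\,M_L\to\int_{|\lambda|\le\mu}(\cdot)\,M$ as $L\to\infty$. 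For the tail I would reuse the uniform bound from the proof of Proposition \ref{Prop:Parseval}: since $f\in\cD^c_\R\subset\cD$, self-adjointness of $\cH_L$ together with $\cH_Lf=\cH f$ yields $\int_{|\lambda|>\mu}(Uf)^\intercal M_L(d\lambda)(Uf)=\sum_{k:|\lambda_{k,L}|>\mu}\langle\varphi_{k,L},f\rangle^2\le \mu^{-2}\|\cH f\|_{L^2}^2$, uniformly in large $L$, while the corresponding tail for $M$ is finite and vanishes as $\mu\to\infty$ by the isometry $\int_\R(Uf)^\intercal M(Uf)=\|f\|_{L^2}^2$.

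It then remains to combine these two pieces. Since $M_L$ (and $M$) is a positive semidefinite matrix measure and $|1/(\lambda-z)|\le 1/|\Im z|$, Cauchy--Schwarz in $L^2(\R,dM_L)$ bounds the tail of the right-hand side by
$$ \frac1{|\Im z|}\Big(\int_{|\lambda|>\mu}(Ug)^\intercal M_L (Ug)\Big)^{1/2}\Big(\int_{|\lambda|>\mu}(Uf)^\intercal M_L (Uf)\Big)^{1/2}\le \frac{\|\cH g\|_{L^2}\,\|\cH f\|_{L^2}}{|\Im z|\,\mu^2}\;, $$
uniformly in large $L$, with the analogous bound for $M$. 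Writing $\limsup_{L}$ of the difference of the right-hand sides as (bulk error $\to 0$) plus the two tails bounded by $C/\mu^2$, and then letting $\mu\to\infty$ along non-atom values, gives the required convergence. This establishes $(\cH-z)^{-1}=E(1/(\cdot-z))$ for all $z\in\C\backslash\R$, and by the stated characterisation of the p.v.m.\ via the resolvents we conclude that $E$ is the unique p.v.m.\ associated to $\cH$.
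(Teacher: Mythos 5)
Your proposal is correct and follows essentially the same route as the paper's own proof: reduction to $f,g\in\cD^c_\R$ by density and boundedness, passage to the limit in the finite-volume identity \eqref{eq:resspectral} via strong resolvent convergence on the left-hand side, and on the right-hand side the same splitting at non-atom values $\pm\mu$, with vague convergence of $M_L$ handling the bulk and the uniform tail bound $\sum_{k:|\lambda_{k,L}|>\mu}\langle\varphi_{k,L},f\rangle^2\le \mu^{-2}\|\cH f\|_{L^2}^2$ combined with Cauchy--Schwarz handling the rest. The only cosmetic difference is that you invoke the isometry of $U$ to control the tail of the limiting measure $M$, where the paper simply lets $\mu\to\infty$ in the bulk integral; these amount to the same thing.
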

\begin{proof}
	Is is elementary to check that $E$ is a p.v.m. To complete the proof, it suffices to show that $(\cH-z)^{-1} = E(\frac1{\cdot - z})$ for any given $z\in\C\backslash\R$, which in turn is equivalent to showing that for any $f,g \in L^2(\R)$ we have
	$$ \langle g, (\cH-z)^{-1}  f \rangle_{L^2(\R)} = \langle g, E(\frac1{\cdot - z}) f\rangle_{L^2(\R)}\;. $$
	By density and boundedness of the operators $(\cH-z)^{-1}$ and $E(\frac1{\cdot - z})$, it is sufficient to establish the identity for all $f,g \in \cD^c_\R$. Fix $f,g \in \cD^c_\R$ and assume from now on that $L$ is taken large enough for their supports to be included in $(-L/2,L/2)$. Consequently $U_L f = U f$ and $U_L g = U g$. Recall from \eqref{eq:resspectral}:
\begin{align*}
\langle g, (\cH_L-z)^{-1}  f \rangle_{L^2(-L/2,L/2)} 
&=\int_\R \frac1{\lambda-z} \big(U g(\lambda)\big)^\intercal \; M_L(d\lambda) \big(U f(\lambda)\big)\;.
\end{align*}
We split this integral into the contributions coming from $|\gl| > \mu$ and the rest. Cauchy-Schwarz inequality gives
\begin{align*}
\Big\vert \int_{|\gl| > \mu} \frac1{\lambda-z} U g(\lambda)^\intercal \; M_L(d\lambda) U f(\lambda) \Big\vert&=\Big\vert \Big\langle \mathbf{1}_{|\cdot| > \mu}\frac1{\cdot - z} Ug , \mathbf{1}_{|\cdot| > \mu} Uf \Big\rangle_{L^2(dM_L)} \Big\vert\\
&\le \frac1{\vert \Im(z)\vert} \Big\| \mathbf{1}_{|\cdot| > \mu} Ug \Big\|_{L^2(dM_L)}\, \big\| \mathbf{1}_{|\cdot| > \mu} Uf \big\|_{L^2(dM_L)}\;.
\end{align*}
We saw in the proof of Proposition \ref{Prop:Parseval} that
$$ \Big\| \mathbf{1}_{|\cdot| > \mu} Uf \Big\|_{L^2(dM_L)}^2= \sum_{k:|\lambda_{k,L}|>\mu} \langle \varphi_{k,L}, f\rangle^2 \le  \frac1{\mu^2} \|\cH f\|_{L^2(\R)}^2\;,$$
and this quantity converges to $0$ as $\mu\to \infty$, uniformly over $L$ large enough. The same holds for $Ug$.\\
On the other hand, provided $\mu$ and $-\mu$ are not atoms of the measures $\xi,\zeta,\eta$, the almost sure vague convergence of $M_L$ towards $M$ ensures that
$$ \int_{\gl \in [-\mu,\mu]} \frac1{\lambda-z}\big(U g(\lambda)\big)^\intercal \;  M_L(d\lambda) \big(U f(\lambda)\big)\;,$$
converges as $L\to\infty$ to
$$ \int_{\gl \in [-\mu,\mu]} \frac1{\lambda-z}U g(\lambda)^\intercal \;  M(d\lambda) U f(\lambda)\;.$$
As $\mu\to \infty$, this quantity converges towards
$$\int_{\gl \in \R} \frac1{\lambda-z} U g(\lambda)^\intercal \;  M(d\lambda) U f(\lambda)\;.$$
At this point, we observe that the following equality of measures holds
$$ \big(U g(\lambda)\big)^\intercal \;  M(d\lambda) \big(U f(\lambda)\big) = \langle g, E(d\lambda) f\rangle_{L^2(\R)}\;.$$
Consequently, using \eqref{Eq:ELuELv} at the second line, we obtain
\begin{align*}
\int_{\gl \in \R} \frac1{\lambda-z} U g(\lambda) M(d\lambda) U f(\lambda) &= \int_{\gl \in \R} \frac1{\lambda-z} \langle g, E(d\lambda) f\rangle_{L^2(\R)}\\
 &= \langle g, E(\frac1{\cdot - z}) f\rangle_{L^2(\R)}\;.
\end{align*}
We have therefore proven that $\langle g, (\cH_L-z)^{-1}  f \rangle_{L^2(-L/2,L/2)}$ converges to $\langle g, E(\frac1{\cdot - z}) f\rangle_{L^2(\R)}$ as $L\to\infty$. Combined with the strong convergence of $(\cH_L-z)^{-1}$ towards $(\cH-z)^{-1}$, we deduce the identity
$$ \langle g, (\cH-z)^{-1}  f \rangle_{L^2(\R)} = \langle g, E(\frac1{\cdot - z}) f\rangle_{L^2(\R)}\;,$$
thus concluding the proof.
\end{proof}

We can now proceed with the proof of Proposition \ref{Prop:Sigma}.

\begin{proof}[Proof of Proposition \ref{Prop:Sigma}] Recall that $\sigma := \mbox{Tr}(M) = \xi + \eta$.
	We have already shown that a.s.~$M_L$ converges vaguely to $M$, so in particular $\sigma_L$ converges vaguely to $\sigma$. It remains to show that $\sigma$ is a spectral measure. To that end, it suffices to show that for any Borel set $A$,
	$$ \Big(\mu_f(A) = 0\;,\quad \forall f\in L^2(\R) \Big)  \Leftrightarrow \sigma(A) = 0\;.$$
	For any $f\in L^2(\R)$, we have
	$$ \mu_f(d\lambda) = \langle f, E(d\lambda) f\rangle = \big(Uf(\gl)\big)^\intercal \;  M(d\lambda) \big(Uf(\gl)\big)\;.$$
	From Lemma \ref{Lemma:M}, all measures appearing in $M$ are absolutely continuous w.r.t.~$\sigma$, and we easily deduce that $\mu_f$ is itself absolutely continuous w.r.t.~$\sigma$. Conversely, assume that $\sigma(A) > 0$ and let us prove the existence of $f$ such that $\mu_f(A) > 0$. Without loss of generality, we can assume that $A$ is bounded. Let us introduce the matrix $N$ as the Radon-Nikodym derivative of $M$ w.r.t.~$\sigma$. The trace of $N$ is constant equal to $1$ $\sigma$-a.e. Consequently, $N(\lambda) \ne 0$ $\sigma$-a.e.~and we can therefore find $v(\lambda) \in \R^2$ such that $v(\lambda)^\intercal \;  N(\lambda) v(\lambda) = 1$ for $\sigma$-a.a.~$\lambda$. To conclude, it suffices to take $f := U^{-1} (\mathbf{1}_A v)$ ($\mathbf{1}_A v$ is in $L^2(dM)$ as $A$ is bounded).
\end{proof}

\subsection{Convergence of the matrix spectral measure}\label{Subsec:Stieltjes}

In the previous subsection, we left aside the proof of Lemma \ref{Lemma:M} on the convergence of the matrix spectral measure. As we already explained, we present a proof which is suggested in~\cite{Carmona} but whose details are more subtle that one may expect at first sight. In particular, it requires some arguments on Stieltjes transforms of Radon measures combined with the following version of Mercer's Theorem:

\begin{lemma}[Mercer's Theorem]\label{Theorem:Mercer}
	Let $I$ be an interval of $\R$. Assume that $K(s,t)$ is a continuous kernel from $I\times I$ to $\C$ that lies in $L^2(I \times I)$, and that there exists a collection $f_n$, $n\ge 1$ of continuous functions from $I$ to $\C$ such that for any $g,h$ in a dense subset of $L^2(I)$
	\begin{align}\label{dec_kernel_Mercer}
\langle g, K h\rangle = \sum_{n\ge 1} \int \bar{g}(s)\bar{f_n}(s) ds \int h(t) f_n(t) dt\;,
	\end{align}
	where the series converges absolutely.
	Then we have $K(s,t) = \sum_{n\ge 1} \bar{f_n}(s) f_n(t)$ where the sum on the right hand side converges absolutely pointwise, and locally uniformly.
\end{lemma}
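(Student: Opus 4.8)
The plan is to realise $K$ as a positive operator with an explicit rank-one decomposition and then to run a Mercer-type argument, the only genuinely delicate point being the equality on the diagonal. Write $(Kh)(s)=\int_I K(s,t)h(t)\,dt$; since $K\in L^2(I\times I)$ this is a bounded operator. Taking $h=g$ in the hypothesis gives $\langle g,Kg\rangle=\sum_n|\int_I g f_n|^2\ge 0$, so $K$ is self-adjoint and positive, and the same computation for the truncated kernels $K_N(s,t):=\sum_{n\le N}\bar f_n(s)f_n(t)$ shows that the remainder $R_N:=K-K_N$, which is again a \emph{continuous} kernel, has a positive operator, namely $\langle g,R_N g\rangle=\sum_{n>N}|\int_I g f_n|^2\ge 0$. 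Throughout I write $S_N(s):=\sum_{n\le N}|f_n(s)|^2$ and $S(s):=\sum_n|f_n(s)|^2$.

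First I would record the elementary fact that a continuous kernel $L$ whose operator is positive has $L(s,s)\ge 0$ and satisfies the pointwise Cauchy--Schwarz bound $|L(s,t)|^2\le L(s,s)L(t,t)$; this follows by testing $\langle g,Lg\rangle\ge 0$ against $g=a\,g^s_\epsilon+b\,g^t_\epsilon$ with approximate identities $g^s_\epsilon,g^t_\epsilon$ concentrating at $s,t$, which in the limit $\epsilon\to 0$ expresses positivity of the Hermitian $2\times 2$ matrix with entries $L(s,s),L(s,t),L(t,s),L(t,t)$. Applying the diagonal inequality to $R_N$ yields $S_N(s)\le K(s,s)$ for every $s$ and $N$, so that $S(s)\le K(s,s)$ and the series defining $S$ converges at each point.

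The hard part will be to upgrade this to the diagonal \emph{equality} $S(s)=K(s,s)$ at \emph{every} point, which is what makes the whole series sum to $K$. I would first obtain it almost everywhere through an integrated identity: for a fixed $s$, $\langle g^s_\epsilon,Kg^s_\epsilon\rangle\to K(s,s)$ by continuity, while Jensen's inequality gives $\langle g^s_\epsilon,Kg^s_\epsilon\rangle=\sum_n|\int g^s_\epsilon f_n|^2\le \frac1{|I^s_\epsilon|}\int_{I^s_\epsilon}S(u)\,du$; integrating this over $s$ in a compact $J\subset I$, using Fatou and $S\le K(\cdot,\cdot)$, forces $\int_J K(s,s)\,ds=\int_J S(s)\,ds$ for every such $J$, hence $S=K(\cdot,\cdot)$ on the diagonal for a.e.\ $s$. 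To promote this to all $s$, I would use that for each \emph{fixed} $s$ the Cauchy--Schwarz bound $|\sum_{M<n\le N}\bar f_n(s)f_n(t)|\le(S(s)-S_M(s))^{1/2}(\sup_{t\in J}K(t,t))^{1/2}$ shows the series $\tilde K(s,t):=\sum_n\bar f_n(s)f_n(t)$ converges uniformly in $t$ on compacts, so $t\mapsto\tilde K(s,t)$ is continuous, and symmetrically $s\mapsto\tilde K(s,t)$ is continuous. On the full-measure set $G=\{s:S(s)=K(s,s)\}$ one has $R_N(s,s)\to0$, hence the pointwise Cauchy--Schwarz bound gives $K_N(s,t)\to K(s,t)$ on $G\times G$; since $G$ is dense and both $\tilde K$ and $K$ are separately continuous, a two-step density argument (first in $t$ for $s\in G$, then in $s$ for every $t$) gives $\tilde K=K$ everywhere, and in particular $S=K(\cdot,\cdot)$ on the whole diagonal, so that $S$ is continuous.

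Finally I would conclude. Since $R_N(s,s)=K(s,s)-S_N(s)$ is a decreasing sequence of continuous functions converging to the continuous limit $0$, Dini's theorem makes the convergence uniform on each compact $J$; the pointwise bound $|R_N(s,t)|\le\sqrt{R_N(s,s)R_N(t,t)}\le \sup_{J}R_N(\cdot,\cdot)$ then gives uniform convergence of $K_N$ to $K$ on $J\times J$, that is, the locally uniform convergence of $\sum_n\bar f_n(s)f_n(t)$ to $K(s,t)$. Absolute pointwise convergence is immediate from Cauchy--Schwarz, $\sum_n|f_n(s)||f_n(t)|\le \sqrt{S(s)S(t)}<\infty$. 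The main obstacle, and the step I would spend the most care on, is the passage from the a.e.\ diagonal identity to the everywhere identity, since a decreasing sequence of continuous functions may have a limit that fails to vanish on a null set; the per-variable uniform convergence together with separate continuity is exactly what rules this out.
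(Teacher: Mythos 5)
Your proof is correct in substance, but it follows the classical Mercer argument rather than the route taken in the paper, and the two differ precisely at the key identification step. The paper, after the same two preliminary facts (the diagonal bound $\sum_{k\le n}|f_k(s)|^2\le K(s,s)$ and the series Cauchy--Schwarz estimate, which already give absolute convergence and separate continuity of $\tilde K(s,t):=\sum_n \bar f_n(s)f_n(t)$), identifies $\tilde K$ with $K$ by an $L^2$ argument: the partial sums are dominated by $K(s,s)^{1/2}K(t,t)^{1/2}$, hence converge to $\tilde K$ in $L^2(B\times B)$ for every compact $B$; the hypothesis \eqref{dec_kernel_Mercer} then yields $\langle g,\tilde K h\rangle=\langle g,Kh\rangle$ for $g,h$ in a dense set, so $\tilde K=K$ a.e., and joint continuity of $K$ plus separate continuity of $\tilde K$ upgrade this to equality everywhere. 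You instead work at the level of the remainder kernels $R_N$: operator positivity, the pointwise Cauchy--Schwarz inequality $|R_N(s,t)|^2\le R_N(s,s)R_N(t,t)$ obtained from approximate identities, an a.e.\ diagonal identity via Jensen, Fatou and Lebesgue differentiation, and then the same density-plus-separate-continuity upgrade. Both proofs conclude identically (Dini on the diagonal, then Cauchy--Schwarz off it), so the endgame is shared; what your route buys is a self-contained, textbook-style argument with the useful intermediate pointwise Cauchy--Schwarz lemma, while the paper's route is shorter because it exploits the hypothesis exactly in the form in which it is given.

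One step of yours deserves more care, and it is exactly the step the paper's $L^2$ argument is designed to bypass. In the Jensen step you apply \eqref{dec_kernel_Mercer} with $g=h=g^s_\epsilon$, i.e.\ you use the \emph{equality} $\langle g^s_\epsilon,Kg^s_\epsilon\rangle=\sum_n|\int g^s_\epsilon f_n|^2$ for a function that need not lie in the dense subset. The naive extension by density gives only one direction: for each $N$, $\sum_{n\le N}|\int g f_n|^2\le\langle g,Kg\rangle$ holds on the dense set and passes to the limit, so for general $g$ one obtains $\sum_n|\int g f_n|^2\le\langle g,Kg\rangle$ --- which is the \emph{wrong} inequality for your chain $K(s,s)\le\liminf_\epsilon\frac1{|I^s_\epsilon|}\int_{I^s_\epsilon}S$. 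To close it you need a genuine argument, e.g.: the operators $A_N:=\sum_{n\le N}\langle \bar f_n,\cdot\rangle\,\bar f_n$ are positive, increasing, bounded above by $K$ in the form sense, so they converge to a bounded positive operator $A_\infty\le K$; then $K-A_\infty\ge 0$ has vanishing quadratic form on a dense set, hence $\|(K-A_\infty)^{1/2}g\|=0$ there, hence $A_\infty=K$, which gives the equality for all $g\in L^2$. This is legitimate whenever the maps $g\mapsto\int gf_n$ are bounded functionals (true in all of the paper's applications, where the $f_n$ are multiples of $L^2$ eigenfunctions), but it should be said; as written, your proof silently assumes an extension whose obvious justification yields only the reverse bound.
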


\begin{proof}
First note that the form of the kernel implies that $K$ is a non-negative kernel that is $\langle g, K g\rangle \geq 0$ for all $g \in L^2(I)$. Moreover, for any $n\ge 1$, the kernel
	$$ K(s,t) - \sum_{k=1}^n \bar{f_k}(s) f_k(t)\;,$$
	is jointly continuous and non-negative. Consequently for all $n \geq 0$,
	$$ \sum_{k=1}^n \vert f_k \vert^2(s) \le K(s,s)\;.$$

	For any $n_0\ge 1$, Cauchy-Schwarz inequality yields
	\begin{align*}
		\sum_{n\ge n_0} \vert \bar{f_n}(s) f_n(t) \vert& \le \big(\sum_{n\ge n_0} \vert f_n \vert^2(s) \big)^{1/2} \big(\sum_{n\ge n_0} \vert f_n \vert^2(t) \big)^{1/2}\\
		&\leq \big(K(s,s))^{1/2} \big(\sum_{n\ge n_0} \vert f_n \vert^2(t) \big)^{1/2}\,.
	\end{align*}

It implies that for all $s, t \in I$, $ \sum_{n\ge 1} \bar{f_n}(s) f_n(t)$ converges absolutely pointwise and locally uniformly in $s$ for fixed $t$ (and locally uniformly in $t$ for fixed $s$). Denote by $\tilde K(s,t)$ this point-wise limit which is necessarily continuous in $s$ for all $t$ fixed and continuous in $t$ for $s$ fixed.

Moreover let $B \subset I$ be compact, thanks to pointwise convergence and domination by the continuous function $K(s,s) K(t,t)$, we have that $(\sum_{k=0}^{n} \bar{f_k}(s) f_k(t))_n$ converges in $L^2(B \times B)$ necessarily towards $\tilde{K}(s,t)$. From \eqref{dec_kernel_Mercer}, we deduce that $\langle g,\tilde{K} h\rangle = \langle g, Kh \rangle$ for all $g,h$ in a dense set of $L^2(B)$. From the boundedness of $K$ and $\tilde{K}$ on $B\times B$, this remains true for all $g,h \in L^2(B)$ so that $\tilde{K} = K$ Lebesgue-a.e.~on $B\times B$. The continuity of $K$ and the partial continuity of $\tilde{K}$ are sufficient to deduce that $K=\tilde{K}$ everywhere on $B\times B$.
	By Dini's Theorem, that we can apply as $K(s,s)$ is continuous, the convergence of
	$$ s\mapsto \sum_{n\ge 1} \vert f_n\vert^2(s)\;,$$
	is locally uniform over $I$. Cauchy-Schwarz again implies that the convergence of $ \sum_{n \geq 1} \bar{f_n}(s) f_n(t)$ is locally uniform as well.
\end{proof}

Let us now collect some definitions and results on the Stieltjes transform of Radon measures. Let $\mu$ be a Radon measure on $\R$ satisfying
\begin{equation}\label{Eq:Radon}
	\int_\R \frac1{1+\vert \lambda \vert } \mu(d\lambda) < \infty\;.
\end{equation}
Set $\C_+:= \{z:\Im(z) >0\}$. The Stieltjes transform of $\mu$ is the map $f:\C_+ \to \C_+$ defined through
$$ f(z) := \int_\R \frac1{\lambda - z} \mu(d\lambda)\;.$$
\begin{lemma}\label{Lemma:Stieltjes}
	Let $\mu_n$, $n\ge 1$ be a sequence of Radon measures whose Stieltjes transforms $f_n$, $n\ge 1$ satisfy \eqref{Eq:Radon} and converge pointwise on $\C_+$ to some function $f$. Assume further that the following limit exists
	$$ \lim_n \int_\R \frac1{1+ \lambda^2 } \mu_n(d\lambda) =: b' \in \R\;.$$
	Then the sequence $\mu_n$ converges vaguely to a Radon measure $\mu$ that satisfies
	$$ \int_\R \frac1{1+ \lambda^2 } \mu(d\lambda) < \infty\;,$$
	and which is related to $f$ through
	\begin{equation}\label{Eq:f}
	f(z) = bz + a + \int_\R \frac{1+z\lambda}{\lambda - z} \frac{\mu(d\lambda)}{1+\lambda^2}\;,
	\end{equation}
	with $a := \Re(f(i))$ and $b:=b' - \int (1+\lambda^2)^{-1} \mu(d\lambda)$.
\end{lemma}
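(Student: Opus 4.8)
The plan is to reduce everything to the vague convergence of the \emph{finite} measures $\nu_n(d\lambda):=\mu_n(d\lambda)/(1+\lambda^2)$ together with a direct computation on the Stieltjes transforms. The two elementary partial-fraction identities I will repeatedly use are, for $z\in\C_+$ and $\lambda\in\R$,
\begin{equation*}
\frac{1+z\lambda}{(\lambda-z)(1+\lambda^2)}=\frac1{\lambda-z}-\frac{\lambda}{1+\lambda^2}\;,\qquad \frac{1+z\lambda}{\lambda-z}=z+\frac{1+z^2}{\lambda-z}\;.
\end{equation*}
Since \eqref{Eq:Radon} holds for each $\mu_n$ and $|\lambda|/(1+\lambda^2)\le C/(1+|\lambda|)$, the real numbers $c_n:=\int_\R \lambda(1+\lambda^2)^{-1}\mu_n(d\lambda)$ are finite, and the first identity gives $g_n(z):=\int_\R\frac{1+z\lambda}{\lambda-z}\,\nu_n(d\lambda)=f_n(z)-c_n$. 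By hypothesis $\nu_n(\R)=\int_\R(1+\lambda^2)^{-1}\mu_n(d\lambda)\to b'$, so the total masses are uniformly bounded and Helly's selection theorem provides, along any subsequence, a further subsequence $(n_k)$ such that $\nu_{n_k}$ converges vaguely to a finite measure $\nu$ with $\nu(\R)\le b'$.

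Next I would pass to the limit in $g_{n_k}$. For fixed $z\in\C_+$ the function $\lambda\mapsto (1+z^2)/(\lambda-z)$ lies in $C_0(\R)$ (continuous, vanishing at $\pm\infty$), and the uniform bound on the total masses upgrades vague convergence to convergence against $C_0$-functions; hence the second identity yields, as $k\to\infty$,
\begin{equation*}
g_{n_k}(z)=z\,\nu_{n_k}(\R)+\int_\R\frac{1+z^2}{\lambda-z}\,\nu_{n_k}(d\lambda)\longrightarrow z\,b'+\int_\R\frac{1+z^2}{\lambda-z}\,\nu(d\lambda)=:G(z)\;.
\end{equation*}
Since $f_{n_k}(z)\to f(z)$ while $c_{n_k}=f_{n_k}(z)-g_{n_k}(z)$ is real and $z$-independent, the quantity $f(z)-G(z)$ must be a $z$-independent real constant $a$, and $c_{n_k}\to a$. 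Setting $\mu(d\lambda):=(1+\lambda^2)\,\nu(d\lambda)$ (a Radon measure with $\int(1+\lambda^2)^{-1}\mu=\nu(\R)<\infty$, and the vague limit of $\mu_{n_k}$ because $\int\varphi\,d\mu_{n_k}=\int\varphi\,(1+\lambda^2)\,d\nu_{n_k}$ with $\varphi(1+\lambda^2)\in C_c(\R)$ for $\varphi\in C_c(\R)$) and $\beta:=b'-\nu(\R)$, the second identity rearranges $G$ into
\begin{equation*}
f(z)=a+\beta z+\int_\R\frac{1+z\lambda}{\lambda-z}\,\frac{\mu(d\lambda)}{1+\lambda^2}\;.
\end{equation*}
Evaluating at $z=i$ gives $(1+i\lambda)/(\lambda-i)=i$, so both the integral and $\beta i$ are purely imaginary and $a=\Re f(i)$; moreover $\beta=b'-\int(1+\lambda^2)^{-1}\mu$, which matches the claimed values of $a$ and $b$.

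Finally I would upgrade subsequential convergence to convergence of the full sequence by invoking uniqueness of the representing measure. A short computation shows that $\Im\frac{1+z\lambda}{\lambda-z}=\frac{y(1+\lambda^2)}{(\lambda-x)^2+y^2}$ for $z=x+iy$, whence
\begin{equation*}
\Im f(x+iy)=\beta y+\int_\R\frac{y}{(\lambda-x)^2+y^2}\,\mu(d\lambda)\;,
\end{equation*}
that is, up to the harmless linear term $\beta y$, the imaginary part of $f$ is the Poisson integral of $\mu$. The Stieltjes--Perron inversion formula then recovers $\mu$ on intervals with continuity-point endpoints as $y\downarrow0$ (the term $\beta y$ contributing nothing), so $\mu$ is uniquely determined by $f$. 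Consequently every vaguely convergent subsequence of $(\nu_n)$ has the same limit $\nu=\mu/(1+\lambda^2)$, which forces $\nu_n\to\nu$ and therefore $\mu_n\to\mu$ vaguely.

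I expect the main obstacle to be this last uniqueness/inversion step: one must justify the Poisson representation of $\Im f$ for an \emph{infinite} measure $\mu$ satisfying only $\int(1+\lambda^2)^{-1}\mu<\infty$, and then apply Stieltjes--Perron inversion in this generality, which is where the careful bookkeeping (finiteness of the Poisson integral, behaviour of the boundary limits) sits. By comparison, the extraction of a subsequential limit via Helly and the identification of the coefficients $a$ and $b$ are routine once the two partial-fraction identities and the $C_0$-upgrade of vague convergence are in hand.
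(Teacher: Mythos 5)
Your proof is correct, and its skeleton coincides with the paper's: you pass to the finite measures $\nu_n=\mu_n/(1+\lambda^2)$, extract a subsequential vague limit by Helly's theorem from the mass bound $\nu_n(\R)\to b'$, pass to the limit in the transform (your two partial-fraction identities reproduce exactly the paper's rewriting $f_n(z)=b_n'z+a_n+(1+z^2)\int(\lambda-z)^{-1}\nu_n(d\lambda)$, your $c_n$ being precisely the paper's $a_n=\Re(f_n(i))$), and you read off $a$ and $b$ by evaluating at $z=i$. The one genuinely different step is the uniqueness argument that upgrades subsequential to full convergence. You recover $\mu$ from $\Im f$ via the Poisson kernel and Stieltjes--Perron inversion for a measure that is only $(1+\lambda^2)^{-1}$-integrable; this is correct and standard (it is the Herglotz--Nevanlinna inversion covered by the paper's own references \cite[Sec.~3.4]{Teschl} and \cite[App.~B]{AizenmanWarzel}), but it is exactly the ``bookkeeping'' you flag as the main obstacle and leave unproved. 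The paper sidesteps this entirely with a softer fact: from the limiting identity $f(z)=b'z+a+(1+z^2)\int(\lambda-z)^{-1}\nu(d\lambda)$ one solves $\int(\lambda-z)^{-1}\nu(d\lambda)=\big(f(z)-b'z-a\big)/(1+z^2)$, so the Stieltjes transform of the \emph{finite} measure $\nu$ is determined by $f$, $a$ and $b'$, none of which depends on the chosen subsequence; uniqueness of Stieltjes transforms of finite measures then fixes $\nu$, and no inversion formula for infinite measures is ever needed. Two minor simplifications are also available to you: since $f_n(i)\to f(i)$ along the full sequence, you get $c_n\to\Re(f(i))$ directly, without routing through the convergence of $g_{n_k}$; and your $C_0$-upgrade of vague convergence under a uniform mass bound is the same device the paper uses implicitly when it asserts convergence of the Stieltjes transforms of $\nu_{n_k}$.
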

The functions of the form \eqref{Eq:f} are called Herglotz-Nevanlinna functions, see for instance~\cite[Sec. 3.4]{Teschl} or~\cite[Appendix B]{AizenmanWarzel}. We did not find the convergence result of Lemma \ref{Lemma:Stieltjes} in the literature so we provide a brief proof of it.
\begin{proof}
	Set
	$$ b_n' := \int_\R \frac1{1+ \lambda^2 } \mu_n(d\lambda)\;,\quad a_n := \Re(f_n(i)) = \int_\R \frac{\lambda}{1+\lambda^2} \mu_n(d\lambda)\;,\quad \nu_n(d\lambda) = \frac{\mu_n(d\lambda)}{1+ \lambda^2}\;.$$
	We can rewrite $f_n$ in the following form
	\begin{equation}\label{Eq:f_n}
		f_n(z) = b_n' z + a_n + (1+z^2) \int_\R \frac1{\lambda - z} \nu_n(d\lambda)\;,
	\end{equation}
	By assumption, the measure $\nu_n$ has a finite total mass which is uniformly bounded over $n\ge 1$. By Helly's selection theorem (which relies on a diagonal extraction from the distribution functions at rationals points), there exists a subsequence $\nu_{n_k}$, $k\ge 1$ that converges vaguely to some limit $\nu$ which is a finite measure. We thus set $\mu(d\lambda) := (1+ \lambda^2) \nu(d\lambda)$. By hypothesis, $a_n$ and $b_n'$ converge to $a$ and $b'$ as $n\to\infty$. Moreover, the vague convergence of $\nu_{n_k}$ towards $\nu$ (and the boundedness of their total-masses) implies the convergence of their Stieltjes transforms. Passing to the limit on \eqref{Eq:f_n} along the subsequence $n_k$ we obtain
	$$ f(z) = b'z + a + (1+z^2) \int_\R \frac1{\lambda - z} \nu(d\lambda)\;.$$
	Since the Stieltjes transform characterizes finite measures, we deduce from this identity that $\nu$ is completely characterized by $f$, $a$ and $b'$. The latter do not depend on the chosen subsequence $n_k$, and consequently $\nu_n$ converges vaguely to $\nu$ as $n\to\infty$. This immediately implies that $\mu_n$ converges vaguely to $\mu$. Finally, by adding and subtracting $z\int (1+\lambda^2)^{-1} \mu(d\lambda)$ in the last identity, we easily obtain \eqref{Eq:f}.
\end{proof}
\begin{remark}
	 One can interpret $b$ as the part of the mass of $\nu_n$ that escapes to infinity in the limit $n\to\infty$. If $\nu_n$ happens to converge weakly to $\nu$, then $b=0$. If in addition $\int_\R \vert \lambda \vert \nu(d\lambda) < \infty$ then $f$ is nothing but the Stieltjes transform of $\mu$
	 $$ f(z) = \int_\R \frac{1}{\lambda - z} \mu(d\lambda)\;.$$
\end{remark}
We now proceed with the proof of the convergence of the matrix spectral measure.
\begin{proof}[Proof of Lemma \ref{Lemma:M}]
	We start with the convergence of $\xi_L$. We aim at applying Lemma \ref{Lemma:Stieltjes}, but we need to check that the required hypotheses are satisfied. To that end, we first collect a few properties on the kernel $G_L(z,s,t)$ of $(\cH_L-z)^{-1}$ for $z\in \C_+$. We already know from Proposition \ref{Prop:Circle} that this function is jointly continuous in $s,t \in [-L/2,L/2]$ so that $(\cH_L-z)^{-1}$ is Hilbert-Schmidt and therefore
	$$ \sum_{k} \| (\cH_L-z)^{-1} \varphi_{k,L}\|^2 = \sum_{k} \vert \lambda_{k,L}-z\vert^{-2} < \infty\;.$$
	We thus deduce that the kernel admits the following representation
	\begin{equation}\label{Eq:GL}
		G_L(z,s,t) = \sum_{k\ge 1} (\lambda_{k,L}-z)^{-1} \varphi_{k,L}(s)\varphi_{k,L}(t)\;,
	\end{equation}
	where the series converges in $L^2((-L/2,L/2)^2)$. We examine separately the real and imaginary parts of this series. The latter is given by
	$$ \Im(G_L(z,s,t)) = \sum_{k\ge 1} \frac{\Im(z)}{\vert\lambda_{k,L}-z\vert^2} \varphi_{k,L}(s)\varphi_{k,L}(t)\;,$$
	and is clearly of the form \eqref{dec_kernel_Mercer}. Mercer's Theorem then implies that the series converges absolutely at every point and uniformly over $[-L/2,L/2]^2$. Regarding the real part, it is given by
	$$ \Re(G_L(z,s,t)) = \sum_{k\ge 1} \frac{\lambda_{k,L}-\Re(z)}{\vert\lambda_{k,L}-z\vert^2} \varphi_{k,L}(s)\varphi_{k,L}(t)\;.$$
	This kernel is not necessarily non-negative and therefore does not decompose as in \eqref{dec_kernel_Mercer}. However, since $\lambda_{k,L}\to +\infty$ as $k\to\infty$, there exists $n=n(L,z) \ge 1$ such that for all $k\ge n$, $\lambda_{k,L} \ge \Re(z)$ and therefore the series can be split into a \emph{finite} sum of negative continuous kernels and a series which is of the form \eqref{dec_kernel_Mercer}. Mercer's Theorem thus applies to this second term and this ensures that the whole series converges absolutely at every point and uniformly over $[-L/2,L/2]^2$. As a consequence the series in \eqref{Eq:GL} satisfies the same property. In particular
	$$ G_L(z,0,0) = \sum_{k\ge 1} (\lambda_{k,L}-z)^{-1}\varphi_{k,L}(0)^2 \;,$$
	converges absolutely. Consequently $\xi_L$ satisfies the integrability condition \eqref{Eq:Radon}, thus admits a Stieltjes transform that coincides with $G_L(z,0,0)$. It remains to check that $G_L(z,0,0)$ and $\int_\R (1+\lambda^2)^{-1} \xi_L(d\lambda)$ converge.\\
	Recall from Proposition \ref{Prop:Circle} that $G_L(z,s,t)$ can be expressed in terms of two solutions $y^{L/2}$ and $y^{-L/2}$ of $-y''+\xi y = zy$ satisfying the Dirichlet b.c.~at $+L/2$ and $-L/2$ respectively. The convergence of the Weyl's functions $m_z(\pm L/2,0)$ towards $m_z(\pm \infty)$ discussed in Subsection \ref{Subsec:Weyl} then ensures that $G_L(z,s,t)$ converges pointwise to $G(z,s,t)$, the kernel of $(\cH-z)^{-1}$. In particular, $G_L(z,0,0)$ converges pointwise to $G(z,0,0)$ for any $z\in \C_+$.\\
	Let us now check that for any $z\in \C_+$
	\begin{equation}\label{Eq:xiGz}
		\lim_L \int_\R \Big\vert \frac1{ \lambda -z } \Big\vert^2 \xi_L(d\lambda) =  \int_{\R} \vert G(z,0,t)\vert^2 dt\;.
	\end{equation}
	First of all, the uniform convergence of the series in \eqref{Eq:GL} together with the fact that $(\varphi_{k,L})_k$ forms an orthogonal basis of $L^2([-L/2,L/2])$ yields
	$$ \int_{-L/2}^{L/2} \vert G_L(z,0,t)\vert^2 dt = \sum_k \Big\vert \frac1{\lambda_{k,L} - z} \Big\vert^2 \varphi_{k,L}(0)^2\;.$$
	Second, we claim that the l.h.s.~converges as $L\to\infty$ to $\int_{\R} \vert G(z,0,t)\vert^2 dt$. From the explicit expressions of these kernels and the convergence of the Weyl's functions, it boils down to proving that
	$$ \int_{0}^{L/2} \vert y^{L/2} \vert^2 dt \to \int_{0}^{+\infty} \vert y^{+} \vert^2 dt\;,$$
	and
	$$ \int_{-L/2}^0 \vert y^{-L/2} \vert^2 dt \to \int_{-\infty}^0 \vert y^{-} \vert^2 dt\;.$$
	We concentrate on the first convergence. Note that $\int_{L/2}^{+\infty} \vert y^{+} \vert^2 dt \to 0$ as $L\to\infty$. Furthermore
	\begin{equation}\label{Eq:mzmz}
		\int_{0}^{L/2} \vert y^{L/2} - y^+ \vert^2 dt = \vert m_z(L/2,0) - m_z(+\infty) \vert^2  \int_{0}^{L/2} \vert y^{D} \vert^2 dt\;.
	\end{equation}
	Recall that $m_z(L/2,0)$ and $m_z(+\infty)$ belong to a disk of radius $(2\vert \Im(z)\vert \int_{0}^{L/2} \vert y^{D} \vert^2 dt)^{-1}$ and that this radius vanishes as $L\to\infty$. Consequently $\vert m_z(L/2,0) - m_z(+\infty) \vert$ is smaller than twice this radius, and we can deduce that \eqref{Eq:mzmz} vanishes as $L\to\infty$. We have thus proved \eqref{Eq:xiGz}. Specializing this identity to $z=i$, we deduce the convergence of $\int_\R (1+\lambda^2)^{-1} \xi_L(d\lambda)$.
	
	We can now apply Lemma \ref{Lemma:Stieltjes} to deduce the vague convergence of $\xi_L$ towards some Radon measure $\xi$, and that the following identity holds (for some parameters $a,b\in\R$)
	\begin{equation}\label{Eq:Gxi}
		G(z,0,0) = bz + a + \int_\R \frac{1+z\lambda}{\lambda - z} \frac{\xi(d\lambda)}{1+\lambda^2}\;.
	\end{equation}
	
	The arguments are similar for $\zeta_L$ and $\eta_L$ but some additional difficulties arise. Let us concentrate on $\zeta_L$. First of all, note that $G_L(z,s,t)$ is $\cC^1$ outside the diagonal $\{(s,t) \in [-L/2,L/2]:s=t\}$ and that $\partial_s\partial_t G_L(z,s,t)$ can be extended continuously up to the diagonal. This function can be seen as the kernel of a Hilbert-Schmidt operator, that we denote $\partial_s\partial_t (\cH_L-z)^{-1}$. Note that for all $g,h \in H^1_0(-L/2,L/2)$
	$$ \langle g, \partial_s \partial_t (\cH_L-z)^{-1} h\rangle = \sum_{k} (\lambda_{k,L}-z)^{-1} \langle g, \varphi_{k,L}'\rangle \langle h, \varphi_{k,L}'\rangle\;.$$
	Now observe that $\Im(\partial_s\partial_t (\cH_L-z)^{-1})$ is a non-negative operator on $H^1_0$, and therefore by density also on $L^2$. Mercer's Theorem then yields that
	$$ \Im(\partial_s \partial_t G_L(z,s,t)) = \sum_{k\ge 1} \Im((\lambda_{k,L}-z)^{-1}) \varphi_{k,L}'(s) \varphi_{k,L}'(t)\;,$$
	where the convergence is absolute at every point and uniform over $[-L/2,L/2]^2$. A similar argument applies to the real part (again, one needs to take care of a finite number of negative terms as above). We thus deduce that
	$$ \partial_s \partial_t G_L(z,s,t) = \sum_{k\ge 1} (\lambda_{k,L}-z)^{-1} \varphi_{k,L}'(s) \varphi_{k,L}'(t)\;,$$
	where the convergence is absolute pointwise and uniform over $[-L/2,L/2]^2$. In particular
	$$ \partial_s \partial_t G_L(z,0,0) = \sum_{k\ge 1} (\lambda_{k,L}-z)^{-1} \varphi_{k,L}'(0)^2 = \int (\lambda-z)^{-1} \zeta_L(d\lambda)\;.$$
	Here again, the convergence of the Weyl's functions $m_z(\pm L/2,0)$ towards $m_z(\pm\infty)$ discussed in Subsection \ref{Subsec:Weyl} and the explicit expressions of the kernels at stake show that
	$$ \lim_L\partial_s \partial_t G_L(z,0,0) = \partial_s \partial_t G(z,0,0)\;,\quad \lim_L \int_\R \Big\vert \frac1{\lambda-z } \Big\vert^2 \zeta_L(d\lambda) =  \int_{\R} \vert \partial_x G(z,0,t)\vert^2 dt\;.$$
	and we deduce from Lemma \ref{Lemma:Stieltjes} that $\zeta_L$ converges vaguely to some Radon measure $\zeta$.
\end{proof}

Let us conclude this subsection with a comment on the matrix $M$. Once we know that $\cH$ has pure point spectrum, with eigenvalues/eigenfunctions denoted $(\lambda_k,\varphi_k)_{k\ge 1}$, one would like to show the very natural identities:
$$ \xi := \sum_{k\ge 1} \varphi_k(0)^2 \delta_{\gl_k}\;,\quad \zeta := \sum_{k\ge 1} \varphi_k'(0)^2 \delta_{\gl_k}\;,\quad \eta := \sum_{k\ge 1} \varphi_k(0)\varphi_k'(0) \delta_{\gl_k}\;.$$
Let us show the first identity. Take $z\in \C_+$. The spectral theorem ensures that for all $f,g \in L^2(\R)$
$$ \langle f, (\cH-z)^{-1} g\rangle = \sum_{k\ge 1}(\lambda_k - z)^{-1} \langle f,\varphi_k\rangle \langle g,\varphi_k\rangle\;,$$
where the series converges absolutely. By Proposition \ref{Prop:H}, the operator $\Im((\cH-z)^{-1})$ is a positive operator with a jointly continuous kernel $\Im(G(z,s,t))$. By Mercer's Theorem, we deduce that
$$ \sum_{k\ge 1} \frac{\Im(z)}{\vert \lambda_k -z \vert^2}  \varphi_k(s) \varphi_k(t)\;,$$
converges absolutely pointwise and locally uniformly over $\R^2$. This implies that
$$ \Im(G(z,0,0)) = \sum_{k\ge 1} \frac{\Im(z)}{\vert \lambda_k -z \vert^2}  \varphi_k(0)^2\;,$$
where the convergence is absolute. Recall from \eqref{Eq:Gxi} that $G(z,0,0)$ is a Herglotz-Nevanlinna function with parameters to $a$, $b$ and $\xi$. It is a well-known fact on Herglotz-Nevanlinna functions that $\xi$ can be read off the imaginary part of $G(z,0,0)$, see for instance~\cite[Prop. B.1]{AizenmanWarzel}. In particular, for any $c\in\R$
$$ \xi(\{c\}) = \lim_{\varepsilon\downarrow 0} \varepsilon  \Im(G(c+i\epsilon,0,0) )\;.$$
We already know that $\xi$ is pure point, so this last identity allows to deduce that $\xi := \sum_{k\ge 1} \varphi_k(0)^2 \delta_{\gl_k}$.

\subsection{Proof of Proposition \ref{Prop:Expo}}{\label{Subsec:Diffusions}}

We need to collect a few properties on the diffusion $\theta_\gl$ that were established in~\cite{DL_Crossover}. The process $\theta_\gl$ mod$[\pi]$ is Markovian and admits a unique invariant measure $\mu_\gl$. The explicit integral expression satisfied by the density (also denoted $\mu_\gl$) will be unnecessary in the present work, however we will need to know that for any compact set $\Delta \subset \R$
\begin{equation}\label{Eq:mu}
	\sup_{\lambda \in \Delta} \sup_{\theta \in [0,\pi)} \mu_\gl(\theta) < \infty\;.
\end{equation}

The transition probabilities $p_{\gl,t}(\theta_0,\theta)$ of the diffusion $\theta_\gl$ mod$[\pi]$ starting from $\theta_\gl(0) = \theta_0$ converge exponentially fast to the invariant measure and we have
\begin{equation}\label{Eq:pmu}
	\sup_{\lambda \in \Delta} \sup_{\theta_0,\theta \in [0,\pi)} \vert p_{\gl,t}(\theta_0,\theta) - \mu_\gl(\theta) \vert \to 0\;,\quad t\to\infty\;.
\end{equation}
A trivial consequence of this convergence is that the transition probabilities are uniformly bounded: for any $t_0 >0$ we have
\begin{equation}\label{Eq:plambda}
	\sup_{\gl\in\Delta} \sup_{t\ge t_0} \sup_{\theta\in [0,\pi)}p_{\gl,t}(\theta) < \infty\;.
\end{equation}

We already mentioned that the solution $\rho_\gl$ of \eqref{EDSlnr} is completely determined by the trajectory of $\theta_\gl$ (and the initial condition $\rho_\gl(0)$). We denote by $\P_{(t_0,\theta_0)}$ the law of $\theta_\gl$ (or $(\theta_\gl,\rho_\gl)$) starting at time $t_0$ from $\theta_0$. We also let $\P_{(t_0,\theta_0) \to (t,\theta)}$ be the law of the same diffusion conditioned on hitting $\theta+\pi\Z$ at time $t$: this is a \emph{bridge} of diffusion.\\

Let us now introduce the so-called concatenation process, which is instrumental in the study of the eigenproblem associated to $\cH_L$. Let $(\theta^+_\gl,\rho^+_\gl)$, resp.~$(\theta^-_\gl,\rho^-_\gl)$, be distributed according to $\P_{(-L/2,0) \to (0,\theta)}$, resp.~$\P_{(-L/2,0) \to (0,\pi-\theta)}$ with $\rho_\gl^+(0) = \rho_\gl^-(0) = 0$ (note that we do not impose an initial condition on $\rho_\gl^\pm$ but rather a terminal condition: this is not a problem since $\rho_\gl$ satisfies an additive equation). We then consider the concatenation of these two independent processes by setting
$$ (\hat{\rho}_\gl(t),\hat{\theta}_\gl(t)) = \begin{cases}(\rho^+_\gl(t),\theta^+_\gl(t))&\mbox{ if }t\in [-L/2,0]\\
(\rho^-_\gl(-t),k\pi - \theta^-_\gl(-t))&\mbox{ if }t\in (0,L/2]\;,
\end{cases}$$
where $k := \lfloor (\theta_\gl^+(0) + \theta_\gl^-(0))/\pi\rfloor$. We denote by $\P^{(0)}_{\theta,\pi-\theta}$ the corresponding law. We can naturally define
$$ \hat{y}_\gl(t) = \hat{r}_\gl(t) \sin(\hat{\theta}_\gl(t))\;.$$

\begin{proof}[Proof of Proposition \ref{Prop:Expo}]

Without loss of generality we can assume that the bounded interval $\Delta$ is open. Let us introduce
$$ G_A(\gl) := \inf_{\theta_0\in [0,\pi)} \sup_{t\in (-A/2,A/2)} \Big(r_{\gl,\theta_0}(t) e^{(\gamma_\gl-\epsilon)|t|}  + \frac1{r_{\gl,\theta_0}(t)} e^{-(\gamma_\gl+\epsilon)|t|} \Big)^q\;,\quad A\in (0,\infty]\;.$$
Almost surely for every $\gl\in\R$ we have
$$ G_\infty(\gl)  = \lim_{A\to\infty} \uparrow G_A(\gl)\;.$$
Indeed, $G_\infty(\gl)  \ge \lim_{A\to\infty} \uparrow G_A(\gl)$ is immediate. Now by contradiction, assume that the inequality is strict: then there exists $\varepsilon>0$ and a sequence $(\theta_n,A_n)$ such that $\theta_n\to\theta_0$, $A_n\uparrow \infty$ and $G_{A_n}(\gl,\theta_n)  < G_\infty(\gl) - \varepsilon$ where
$$ G_A(\gl,\theta)   := \sup_{t\in (-A/2,A/2)} \Big(r_{\gl,\theta}(t) e^{(\gamma_\gl-\epsilon)|t|}  + \frac1{r_{\gl,\theta}(t)} e^{-(\gamma_\gl+\epsilon)|t|} \Big)^q \;.$$
By continuity, we know that $G_A(\gl,\theta_0) = \lim_n G_A(\gl,\theta_n)$, and since $A\mapsto G_A(\gl,\theta)$ is non-decreasing we deduce that for all $n\ge 1$ such that $A_n>A$
$$ G_A(\gl,\theta_n) \le G_{A_n}(\gl,\theta_n) < G_\infty(\gl) - \varepsilon\;,$$
thus implying that $G_\infty(\gl,\theta_0) < G_\infty(\gl) - \varepsilon$, a contradiction.\\

Consequently, the Monotone Convergence Theorem ensures that it suffices to prove the bound
$$ \sup_{A>0} \E\Big[ \int_\Delta G_A(\gl) \sigma(d\gl)\Big] < \infty\;.$$

Fix $A>0$. The almost sure vague convergence of $\sigma_L$ to $\sigma$, together with the almost sure continuity of $\gl\mapsto G_A(\gl)$, shows that a.s.
$$ \int_\Delta G_A(\gl) \sigma(d\gl) \le \liminf_{L\to\infty} \int_\Delta G_A(\gl) \sigma_L(d\gl)\;.$$
Applying Fatou's Lemma, we obtain
$$ \E\Big[ \int_\Delta G_A(\gl) \sigma(d\gl)\Big] \le \liminf_{L\to\infty}  \E\Big[ \int_\Delta G_A(\gl) \sigma_L(d\gl)\Big]\;.$$
We are therefore left with proving the bound
\begin{equation}\label{Eq:ExpoToProve} \sup_{A>0} \sup_{L>A}  \E\Big[ \int_\Delta G_A(\gl) \sigma_L(d\gl)\Big] < \infty\;.\end{equation}

Recall the expression of the spectral measure $\sigma_L$ and observe that
$$ \E\Big[ \int_\Delta G_A(\gl) \sigma_L(d\gl)\Big] = \E\Big[\sum_{k\ge 1} \big(\varphi_{k,L}(0)^2 + \varphi_{k,L}'(0)^2\big)\;G_A(\gl_{k,L},\varphi_{k,L},\varphi_{k,L}')\Big]\;,$$
where
$$ G_A(\gl,f,g) = \mathbf{1}_\Delta(\gl) \sup_{t\in (-A/2,A/2)} \Big(\frac{\sqrt{f^2(t) + g^2(t)}}{\sqrt{f^2(0) + g^2(0)}} e^{(\gamma_\gl-\epsilon)|t|}  + \frac{\sqrt{f^2(0) + g^2(0)}}{\sqrt{f^2(t) + g^2(t)}} e^{-(\gamma_\gl+\epsilon)|t|} \Big)^q\;.$$
Then, we rely on the following formula established in~\cite[Prop 6.1]{DL_Crossover} (take $u=0$ and $\mathbf{E} = 1$ therein):
\begin{align*}
&\E\big[\sum_{i\ge 1} \big(\varphi_{i,L}(0)^2 + \varphi_{i,L}'(0)^2\big)\;G_A(\gl_{i,L},\varphi_{i,L},\varphi_{i,L}')\big] \\
&=\int_{\gl\in\R} \int_{\theta=0}^\pi p_{\gl,\frac{L}{2}}(\theta) p_{\gl,\frac{L}{2}}(\pi-\theta)
 \E^{(0)}_{\theta,\pi-\theta}\Big[G_A\Big(\gl,\frac{\yu_\gl}{\|\yu_\gl\|},\frac{\yu_\gl'}{\|\yu_\gl\|}\Big)\Big] \, d\theta d\gl\;.
\end{align*}
Since the transition probabilities are uniformly bounded, see \eqref{Eq:plambda}, it suffices to prove
$$ \sup_{A>0}\sup_{L>A}\sup_{\gl\in\Delta } \sup_{\theta\in [0,\pi)}\E^{(0)}_{\theta,\pi-\theta}\Big[G_A\Big(\gl,\frac{\yu_\gl}{\|\yu_\gl\|},\frac{\yu_\gl'}{\|\yu_\gl\|}\Big)\Big] <\infty\;,$$
which can be rewritten as (recall that $\hat{r}_\gl(0) = 1$)
$$ \sup_{A>0}\sup_{L>A}\sup_{\gl\in\Delta} \sup_{\theta\in [0,\pi)}\E^{(0)}_{\theta,\pi-\theta}\Big[\sup_{t\in (-A/2,A/2)} \Big(\ru_\gl(t) e^{(\gamma_\gl-\epsilon)|t|}  + \frac1{\ru_\gl(t)} e^{-(\gamma_\gl+\epsilon)|t|} \Big)^q \Big] < \infty\;.$$
Here we deal with the concatenation of two independent bridges on $(-L/2,0)$ and $(0,L/2)$. By symmetry, it suffices to prove (recall that $\rho_\gl(0) = 1$)
$$ \sup_{A>0}\sup_{L>A}\sup_{\gl\in\Delta} \sup_{\theta\in [0,\pi)}\E_{(-L/2,0)\rightarrow (0,\theta)}\Big[\sup_{t\in [-A/2,0]} \Big(r_\gl(t) e^{(\gamma_\gl-\epsilon)|t|}  + \frac1{r_\gl(t)} e^{-(\gamma_\gl+\epsilon)|t|} \Big)^q \Big] < \infty\;.$$
This can be rewritten in the more convenient form
$$ \sup_{A>0}\sup_{L>A}\sup_{\gl\in\Delta} \sup_{\theta\in [0,\pi)}\E_{(0,0)\rightarrow (L/2,\theta)}\Big[\sup_{t\in [(L-A)/2,L/2]} e^{\frac{q}2\vert \rho_\gl(t) - \rho_\gl(L/2) - 2\gamma_\gl(t-L/2)\vert -q\epsilon|t-L/2|}\Big] < \infty\;.$$

We have therefore reduced the problem to some exponential moments on the bridge of the solution to an additive SDE. To that end, we will consider the time-reversal of this bridge (this is more convenient since the estimate concerns the final portion of the bridge) and we will be able to disregard the bridge condition by showing that the bridge is absolutely continuous w.r.t.~the unconditioned diffusion as long as we are concerned with its behavior not too close to the end point.\\

We rely on the adjoint diffusions $(\bar{\theta}_\gl,\bar{\rho}_\gl)$ defined in~\cite[Sect 7.2]{DL_Crossover}: for any Borel set $F$ of the set of continuous functions on $[0,t]$ we have
$$ \mu_\gl(\theta_0) p_{\gl,t}(\theta_0,\theta_1) \P_{(0,\theta_0) \to (t,\theta_1)}(F) =  \mu_\gl(\theta_1) \bar{p}_{\gl,t}(\theta_1,\theta_0) \bar{\P}_{(0,\theta_1) \to (t,\theta_0)}(\bar{F})\;,$$
where $\bar{F}$ is the image of $F$ upon reversing time. Since $p_{\gl,t}$ and $\bar{p}_{\gl,t}$ converge to $\mu_\gl$ uniformly over all $\theta_0$ and all $\gl$ in a compact set, it suffices to show that
$$ \sup_{A>0} \sup_{L>A}\sup_{\gl\in\Delta} \sup_{\theta\in [0,\pi)}\bar{\E}_{(0,\theta)\rightarrow (L/2,0)}\Big[\sup_{t\in [0,A/2]} e^{\frac{q}2\vert \bar{\rho}_\gl(t) + 2\gamma_\gl t\vert -q\epsilon t} \Big] < \infty\;.$$
By the uniform absolute continuity stated in~\cite[Lemma 7.3]{DL_Crossover}, we can replace $\bar{\E}_{(0,\theta)\rightarrow (L/2,0)}$ by $\bar{\E}_{(0,\theta)}$. The existence of a parameter $q>0$ such that the above quantity is indeed finite is then a consequence of~\cite[Lemma 7.2]{DL_Crossover} (and of the fact that $2\gamma_\gl = \nu_\gl$).\\
Note that the two lemmas that we have just quoted state uniform estimates on a microscopic interval in $\lambda$ (the size of $\Delta$ therein is of order $1/L$) however the proofs of these lemmas rely on previous estimates in~\cite{DL_Crossover} that hold uniformly over macroscopic intervals in $\lambda$.
\end{proof}

\begin{remark}[Perturbation of the noise]
	The main technical input in our proof of localization is actually the only sensitive part to a perturbation of the noise: Proposition \ref{Prop:Expo}. Indeed, while all other arguments still work with $\xi_\varepsilon$, the proof of this proposition relies on estimates on our diffusions that would not be easy to obtain if we lose the Markov property induced by white noise.
\end{remark}

\section{Connection with the PAM}\label{Sec:PAM}

In this section, we collect spectral information on $\cH$ using mostly PDE arguments.  More precisely, we exploit the connection between the parabolic Anderson model, whose construction can be carried out with standard PDE arguments, and the semigroup associated to the operator $\cH$, to deduce that the spectrum of $\cH$ equals $\R$ and to establish a priori growth estimates on generalized eigenfunctions (that will be required in Section \ref{Sec:Kotani} to follow the Kotani-Simon approach for proving Anderson localization). Let us point out that the singularity of $\xi$ induces some difficulties in establishing the latter growth estimates.

\subsection{The PAM and its connection to the hamiltonian}\label{Subsec:PAM}

We consider the parabolic Anderson model (PAM) defined as the solution of the following ODE
$$ \begin{cases}
	\partial_t u &= \partial_x^2 u - \xi u\;, \mbox{ on }(0,\infty)\times \R\;,\\
	u(0,x) &=u_0(x)\;,
\end{cases}$$
for some initial condition $u_0$. More precisely, let $P_t(x)$ be the heat kernel, we look for a function $u:(0,\infty)\times\R \to \R$ that satisfies\footnote{The integrals w.r.t.~$y$ have to be understood in the sense of distributions.}
\begin{equation}\label{Eq:PAMmild}
	u(t,x) = \int_y P_t(x-y) u_0(y) dy - \int_0^t \int_y P_{t-s}(x-y) \xi(dy) u(s,y) ds\;.
\end{equation}
The existence and uniqueness of the solutions of this PDE belongs to the folklore of the literature on the PAM. For completeness, we give in Proposition \ref{Prop:Weight} a precise existence and uniqueness result for this PDE. To state it, we need to introduce weighted Sobolev spaces and a couple of parameters that control regularity and integrability properties of the functions at stake.\\

Let us mention that there are two sources of difficulties when solving the above PDE. One comes from the irregularity of the objects at stake: white noise is distribution-valued and one therefore needs to be careful when dealing with products of functions/distributions. Another one concerns the lack of integrability at $\pm\infty$ of the objects at stake due to the unboundedness of the spatial domain on which we solve the PDE: indeed, white noise, as a distribution on $\R$, only lives in \emph{weighted} H\"older spaces of distributions and this implies that the whole solution theory needs to be set up in weighted functions/distributions spaces.\\

We denote by $H^\gamma$ the classical Sobolev space of regularity index $\gamma \in \R$. For a so-called weight function $w:\R\to \R_+$, let $H^\gamma_w$ be the set of distributions $f$ whose local Sobolev norm does not grow faster than $w$, more precisely:
$$ \| f \|_{H^\gamma_w}^2 := \sum_{k\in\Z} \Big(\frac{\| f(\cdot) \chi(\cdot-k)\|_{H^\gamma}}{w(k)} \Big)^2 < \infty\;,$$
where $\chi$ is an arbitrary smooth function, supported in $[-2,2]$, that equals $1$ on $[-1,1]$.\\

Below we will deal with the two-parameter weight function
$$ w_{\beta,\ell}(x) := \exp\big( \ell (1+\vert x \vert)^{\beta} \big)\;,\quad x\in\R\;,$$
where $\beta \in (0,1]$ and $\ell\in\R$. Observe that $\ell > 0$ allows for a growth at infinity, while $\ell < 0$ implies a decay. Note that the Dirac mass $\delta_{x}$ belongs to $H^{-1/2}$ and, since it is compactly support, it belongs to $H^{-1/2}_{w_{\beta,\ell}}$ for any $\ell \in \R$ and any $\beta \in (0,1]$.

Given some  parameters $q\in (0,1)$ and $T>0$, we define the Banach space $\cE=\cE(\gamma,q,T,\ell,\beta)$ as the closure of all compactly supported, smooth functions $u:(0,T)\times\R\to\R$ under the norm
$$ \|u\|_{\cE} := \sup_{t\in (0,T]} t^q \| u(t,\cdot) \|_{H^{\gamma}_{w_{\beta,\ell+t}}} < \infty\;.$$
Note that the parameter $q$ allows for a blow-up of $u$ at time $0$: this is useful to start the PAM from an initial condition with low regularity.

\begin{proposition}\label{Prop:Weight}
	Fix $\beta \in  (0,1]$, $\ell \in \R$, $\alpha > -3/2$ and $T>0$, and take $\gamma < (\alpha +2) \wedge 3/2$. There exists $q \in (0,1)$ such that for any $u_0 \in H^\alpha_{w_{\beta,\ell}}$, there exists a unique solution $u\in \cE$ of \eqref{Eq:PAMmild} starting from $u_0$. In addition, the solution map $u_0 \mapsto u$ is Lipschitz in the above norms. Finally, the solution $u$ satisfies in the sense of distributions
	\begin{equation}\label{Eq:PDE}
		\partial_t u = \partial^2_x u - \xi u\;,\quad \mbox{ on }(0,\infty) \times \R\;.
	\end{equation}
\end{proposition}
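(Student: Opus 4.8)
The plan is to recast the mild equation \eqref{Eq:PAMmild} as a fixed point problem and to exploit the fact that it is \emph{linear} in $u$. Writing the right-hand side as $\Phi(u) := v_0 - \cI(\xi u)$, where $v_0(t,\cdot) := P_t \ast u_0$ is the free heat evolution and $\cI(h)(t,\cdot) := \int_0^t P_{t-s}\ast h(s,\cdot)\,ds$ is the Duhamel operator, a mild solution is exactly a fixed point of the affine map $\Phi$, whose linear part is $u\mapsto -\cI(\xi u)$. I would carry out the contraction at a single regularity $\gamma_0 \in \big(1/2,\,(\alpha+2)\wedge 3/2\big)$, an interval that is nonempty precisely because $\alpha>-3/2$; the lower bound $\gamma_0>1/2$ is what makes the product $\xi u$ classically meaningful, and the full range $\gamma<(\alpha+2)\wedge 3/2$ of the statement is then recovered from the solution in $\cE(\gamma_0,\cdots)$ via the embedding $H^{\gamma_0}\hookrightarrow H^{\gamma}$ after adjusting $q$. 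Since the equation is linear in both $u$ and $u_0$, the contraction will yield existence, uniqueness and a bound $\|u\|_{\cE}\lesssim\|u_0\|_{H^\alpha_{w_{\beta,\ell}}}$ in one stroke, the latter giving the (in fact linear, hence Lipschitz) dependence on the initial datum.

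The whole argument rests on two weighted estimates that I would establish first. The first is a weighted Schauder bound: for $\alpha\le\gamma$ and $\beta\in(0,1]$,
$$ \| P_t \ast f\|_{H^\gamma_{w_{\beta,\ell+t}}} \lesssim t^{-(\gamma-\alpha)/2}\,\|f\|_{H^\alpha_{w_{\beta,\ell}}}\;,\quad t\in(0,T]\;.$$
Here the time-dependent exponent $\ell+t$ is essential: convolution against a Gaussian spreads mass, and against the spatial weight $\exp(\ell(1+|x|)^\beta)$ it can be controlled only if one lets the weight relax slightly in time, which the shift $\ell\mapsto\ell+t$ provides (the case $\beta=1$ being borderline). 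The second is a weighted multiplication estimate coming from Bony's paraproduct decomposition: since white noise on $\R$ lies almost surely in $H^{-1/2-\epsilon}_{w_{\beta,\delta}}$ for all $\epsilon,\delta>0$, and since $\gamma_0>1/2$, the pieces $\xi\prec u$, $u\prec\xi$ and the resonant product $\xi\circ u$ are all well defined (no renormalization is needed in dimension one, unlike the higher-dimensional situation mentioned in the introduction), giving
$$ \|\xi u\|_{H^{-1/2-\epsilon}_{w_{\beta,\ell+t+\delta}}} \lesssim \|\xi\|_{H^{-1/2-\epsilon}_{w_{\beta,\delta}}}\,\|u\|_{H^{\gamma_0}_{w_{\beta,\ell+t}}}\;,$$
with the weights adding in the exponent.

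With these in hand I would close the fixed point. Bounding $\cI(\xi u)(t)$ in $H^{\gamma_0}_{w_{\beta,\ell+t}}$ costs a factor $(t-s)^{-(\gamma_0+1/2+\epsilon)/2}$ from the heat smoothing of $\xi u(s)\in H^{-1/2-\epsilon}$, while $\|u(s)\|_{H^{\gamma_0}}\le s^{-q}\|u\|_{\cE}$ by definition of $\cE$. The resulting integral $\int_0^t (t-s)^{-(\gamma_0+1/2+\epsilon)/2}s^{-q}\,ds$ converges exactly when $(\gamma_0+1/2+\epsilon)/2<1$, i.e. $\gamma_0<3/2$, and $q<1$, producing a factor $t^{1-(\gamma_0+1/2+\epsilon)/2}$; after the prefactor $t^{q}$ this is bounded by $T^{1-(\gamma_0+1/2+\epsilon)/2}\|u\|_{\cE}$, which is $<1$ for $T$ small. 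This gives a contraction and hence a unique short-time solution, and linearity lets one glue successive intervals—restarting from $u(T_0)\in H^{\gamma_0}$, which sits at the higher regularity and allows $q=0$—to reach an arbitrary $T$, the growing weight $w_{\beta,\ell+t}$ being precisely what renders the gluing consistent. The initial datum enters through $\|v_0(t)\|_{H^{\gamma_0}_{w_{\beta,\ell+t}}}\lesssim t^{-(\gamma_0-\alpha)/2}\|u_0\|_{H^\alpha_{w_{\beta,\ell}}}$, forcing $q\ge(\gamma_0-\alpha)/2$, which is compatible with $q<1$ exactly because $\gamma_0<\alpha+2$. Finally, to check that $u$ solves \eqref{Eq:PDE} in the distributional sense, I would test the mild formulation against a smooth compactly supported space-time function and use the semigroup property of $P_t$ together with Fubini to identify $\partial_t u-\partial_x^2 u$ with $-\xi u$.

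I expect the main obstacle to be the weighted analysis rather than the singularity of the product. In dimension one the product $\xi u$ is a bona fide Bony multiplication as soon as $\gamma_0>1/2$, so no renormalization intervenes; the delicate, technical heart of the proof is instead carrying the two-parameter weights $w_{\beta,\ell}$ consistently through both the heat-semigroup bound and the paraproduct estimate, and in particular justifying the time-dependent shift $\ell\mapsto\ell+t$ in the borderline case $\beta=1$.
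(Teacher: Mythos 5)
Your overall architecture (contraction for the affine map $u\mapsto v_0-\cI(\xi u)$ in $\cE$, heat-kernel smoothing, a Young/paraproduct product estimate requiring regularity above $1/2$, the exponent bookkeeping $\tfrac{\gamma_0+1/2+\epsilon}{2}<1$ and $q\ge(\gamma_0-\alpha)/2<1$) parallels the paper's proof. But there is a genuine gap in your second key estimate, and it sits exactly at the ``delicate, technical heart'' you identify. You measure the noise in an \emph{exponentially} weighted space $H^{-1/2-\epsilon}_{w_{\beta,\delta}}$, so that the product at time $s$ carries the weight exponent $\ell+s+\delta$ (weights multiply, hence exponents add). The Duhamel output at time $t$ must be measured with exponent $\ell+t$, and convolution with the heat kernel can only preserve or increase growth at infinity, never reduce it; the relevant weight ratio $\sup_x w_{\beta,\ell+s+\delta}(x)/w_{\beta,\ell+t}(x)$ is \emph{infinite} whenever $t-s<\delta$. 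Since $\delta>0$ is fixed and $s$ ranges up to $t$ in the Duhamel integral, the iteration cannot be closed. Nor can you rescue this by letting $\delta$ shrink with $t-s$: in the Sobolev scale the norm $\|\xi\|_{H^{-1/2-\epsilon}_{w_{\beta,\delta}}}$ blows up like $\delta^{-1/(2\beta)}$ as $\delta\downarrow 0$ (the $\ell^2$-sum over unit blocks of the stationary local norms forces this), and the resulting extra singularity $(t-s)^{-1/(2\beta)}$ added to $(t-s)^{-(\gamma_0+1/2+\epsilon)/2}$ with $\gamma_0>1/2$ always exceeds total exponent $1$, even for $\beta=1$.

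The paper's mechanism is different in precisely this spot, and it is the point your proposal misses. One places the noise in a \emph{polynomially} weighted H\"older space $\cC^{-1/2-\kappa}_p$ with $p(x)=(1+|x|)^a$, where $a>0$ can be taken \emph{arbitrarily small} (this is possible in the H\"older scale, where one takes a supremum of local norms that grow only logarithmically, unlike the $\ell^2$-summed Sobolev scale where one would need $a>1/2$). The product $\xi u(s)$ then carries the weight $p\,w_{\beta,\ell+s}$, and the key lemma
$$ \sup_{x\in\R}\frac{p(x)\,w_{\beta,\ell+s}(x)}{w_{\beta,\ell+t}(x)}\le C\,(t-s)^{-a/\beta} $$
trades the polynomial excess weight for a time singularity $(t-s)^{-a/\beta}$ that can be made as small as desired by shrinking $a$, so that the total exponent $\tfrac{\gamma+1/2+\kappa}{2}+\tfrac{a}{\beta}$ stays below $1$. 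In other words, the time-dependent shift $\ell\mapsto\ell+t$, which you invoke only for the free evolution $P_t\ast u_0$, is actually the device that absorbs the noise's spatial growth inside the Duhamel term --- but only if that growth is polynomial; your exponential bookkeeping makes the absorption impossible. The remaining ingredients of your proposal (single regularity $\gamma_0>1/2$ followed by embedding, gluing in time, identification of the distributional equation) are sound and in fact spell out points the paper leaves implicit.
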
	

A simple consequence of this result combined with standard Sobolev Embeddings is that, at any time $t>0$, there exists $C=C(t)>0$ such that the unique solution $u$ of the PAM starting from $u_0 \in H^\alpha_{w_{\beta,\ell}}$ satisfies the bound
$$  \sup_{x\in\R} \frac{\vert u(t,x)\vert}{w_{\beta,\ell+t}(x)} \le C \| u_0 \|_{H^\alpha_{w_{\beta,\ell}}} \;.$$
Note that the growth of the solution at infinity is comparable to the growth of the initial condition: while the index $\beta$ remains the same, the prefactor $\ell$ is increased by $t$.\\


Let us make a comment on \eqref{Eq:PDE}. First of all, we preferred to write $\partial^2_x u - \xi u$ on the r.h.s.~rather than $-\cH u$ since $u(t,\cdot)$ does not necessarily belong to the domain of $\cH$. Second, this equation makes sense provided the product $\xi u(t,\cdot)$ is well-defined: this is the case here since $u(t,\cdot)$ belongs locally to $H^{1/2+\kappa}$ for some $\kappa>0$.

\begin{proof}[Sketch of proof of Proposition \ref{Prop:Weight}]
	The proof follows essentially the same steps as in~\cite[Sec 4]{PAM2D} where the solution theory of the PAM in dimension $2$ is established. Let us briefly present the main arguments. The core of the proof is to show that, for $T> 0$ small enough, the map $u\mapsto \cM(u)$ is contractive from $\cE$ to $\cE$ where
	$$ \cM(u)(t,x) := \int_y P_t(x-y) u_0(y) dy - \int_0^t \int_y P_{t-s}(x-y) \xi(dy) u(s,y) ds\;.$$
	The main difficulty consists in evaluating the $\cE$-norm of the second term. To that end, one first observes that for any given $\kappa > 0$ and $a>0$, $\xi$ almost surely belongs to the weighted H\"older space $\cC^{-1/2-\kappa}_p$ where $p(x):= (1+|x|)^a$. Second there exist $C=C(a/\beta) > 0$ such that for all $s < t$,
	$$ \sup_{x\in\R} \frac{p(x) w_{\beta,\ell+s}(x)}{w_{\beta,\ell+t}(x)} \leq C (t-s)^{-\frac{a}{\beta}}\;.$$
	Consequently at any time $s\in (0,t)$
	$$ \Big\| \int_y P_{t-s}(\cdot-y) v(y) \Big\|_{H^{\gamma}_{w_{\beta,\ell+t}}} \le C (t-s)^{-\frac{a}{\beta}} \Big\| \int_y P_{t-s}(\cdot-y) v(y) \Big\|_{H^{\gamma}_{pw_{\beta,\ell+s}}}\;.$$
	Third, $P_{t-s}$ improves Sobolev regularity by $c$ at the price of a prefactor $(t-s)^{-\frac{c}{2}}$, that is, there exists a constant $C>0$ such that
	\[
	\Big\| \int_y P_{t-s}(\cdot-y) v(y) \Big\|_{H^{\gamma}_{pw_{\beta,\ell+s}}} \le C  (t-s)^{-\frac{c}{2}}	\| v \|_{H^{\gamma-c}_{pw_{\beta,\ell+s}}} \;.
	\]
	Fourth, Young's integral ensures that for all $\gamma > 1/2 + \kappa$, there exists a constant $C>0$ such that
	\[
	\big\| \xi u(s,\cdot) \big\|_{H^{-\frac12 - \kappa}_{pw_{\beta,\ell+s}}} \leq C \|\xi\|_{\cC^{-\frac12-\kappa}_p} \|u(s,\cdot)\|_{H^{\gamma}_{w_{\beta,\ell+s}}}\;.
	\]
	Putting everything together, we thus deduce that
	\[
	\Big\| \int_0^t \int_y P_{t-s}(x-y) \xi(dy) u(s,y) ds \Big\|_{H^{\gamma}_{w_{\beta,\ell+t}}} \le C \int_0^t (t-s)^{-\frac{\gamma+\frac12+\kappa}{2}-\frac{a}{\beta}} s^{-q} ds \;  \|\xi\|_{\cC^{-1/2-\kappa}_p} \|u\|_{\cE}\;.
	\]
	Provided $\kappa$ and $a$ are small enough, and $q$ is close enough to $1$ the integral on the right converges, and yields a prefactor which is negligible compared to $t^{-q}$, thus providing the required contractivity.
\end{proof}

The next proposition relates the solution of the PAM to the family of operators $e^{-t\cH}$, $t>0$. While such a result is expected, the present situation is not standard. First of all, as we will see later on the spectrum of $\cH$ happens to be unbounded from above \emph{and} below and therefore the domain of $e^{-t\cH}$ \emph{does not} contain the domain of $\cH$. (Recall that the domain of $e^{-t\cH}$ is the set of all functions $f\in L^2$ such that $\int e^{-2t\lambda} \mu_f(d\lambda) < \infty$). Second, the singularity of the noise prevents from multiplying it with arbitrary $L^2$ functions and one needs Young's integral to make sense of some products. For completeness, we thus provide the main steps of the argument.

\begin{proposition}\label{Prop:PAM}
	For any given time $t>0$, the following holds:\begin{enumerate}[label=(\roman*)]
	\item For any $u_0\in L^2$, the solution $u(t,\cdot)$ of the PAM starting from $u_0$ satisfies
	$$ u(t,x) = \int_{y\in \R} u_0(y) u^{(y)}(t,x) dy\;,\quad x\in\R\;,$$
	where $u^{(y)}$ is the solution of the PAM starting from $\delta_y$.
	\item For any $u_0$ in the domain of $e^{-t\cH}$ we have
	$$ (e^{-t\cH} u_0)(\cdot) = u(t,\cdot)\;,$$
	where $u(t,\cdot)$ is the solution of the PAM starting from $u_0$.
	\end{enumerate}
	As a consequence, $e^{-t\cH}$ admits a jointly continuous kernel given by
	\begin{equation}\label{Eq:PAMKernel}
		e^{-t\cH}(y,x) = u^{(y)}(t,x)\;,\quad (x,y) \in\R^2\;.
	\end{equation}
\end{proposition}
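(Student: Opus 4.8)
The plan is to treat the two assertions separately and then combine them. For part (i), I would exploit the linearity of the mild equation \eqref{Eq:PAMmild}. Setting $v(t,x) := \int_\R u_0(y)\, u^{(y)}(t,x)\, dy$, the strategy is to integrate the mild formulation satisfied by each $u^{(y)}$ against $u_0(y)\,dy$ and to invoke Fubini's theorem in order to exchange the $dy$-integration with the space-time integration against $P_{t-s}(x-z)\,\xi(dz)\,ds$. This shows that $v$ itself solves \eqref{Eq:PAMmild} with initial condition $u_0$, and the uniqueness part of Proposition \ref{Prop:Weight} then forces $v=u$. The only delicate point is the justification of Fubini, for which I would use the weighted bound following Proposition \ref{Prop:Weight}, namely $|u^{(y)}(t,x)|\le C(t)\, w_{\beta,\ell+t}(x)\,\|\delta_y\|_{H^{-1/2}_{w_{\beta,\ell}}}$, together with the fact that for $\ell>0$ the norm $\|\delta_y\|_{H^{-1/2}_{w_{\beta,\ell}}}$ decays like $w_{\beta,\ell}(y)^{-1}$; this makes the family $(u^{(y)})_y$ integrable against any $u_0\in L^2$ with the uniformity needed to swap the integrals.

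For part (ii), the natural route is a finite-volume approximation. On $(-L/2,L/2)$ the operator $\cH_L$ is bounded below with compact resolvent (Proposition \ref{Prop:Circle}), hence $-\cH_L$ generates a genuine strongly continuous semigroup $e^{-t\cH_L}$, and classical parabolic theory identifies $e^{-t\cH_L}u_0$ with the finite-volume PAM started from $u_0$ (with the Dirichlet boundary conditions of $\cH_L$), by uniqueness for the abstract Cauchy problem $\dot v=-\cH_L v$. The plan is then to let $L\to\infty$ along two convergences: first, the finite-volume PAM converges to the full-space solution $u(t,\cdot)$ of Proposition \ref{Prop:Weight}, which is a stability statement for the solution map localized via the Gaussian decay of $P_t$; second, $e^{-t\cH_L}u_0$ converges to $e^{-t\cH}u_0$. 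For the latter I would pass to the spectral representation and write $\langle v, e^{-t\cH_L}u_0\rangle = \int_\R e^{-t\lambda}\,(U_L v)^\intercal M_L(d\lambda)\,(U_L u_0)$ for $v,u_0\in\cD^c_\R$ (so that $U_L=U$ for $L$ large), then use the vague convergence $M_L\to M$ of Lemma \ref{Lemma:M}, and finally extend from the dense set $\cD^c_\R$ to all $u_0$ in the domain using the closedness of $e^{-t\cH}$.

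The main obstacle is precisely this last convergence: the multiplier $e^{-t\lambda}$ is unbounded as $\lambda\to-\infty$, while the spectra of $\cH_L$ and $\cH$ extend to $-\infty$, so vague convergence of $M_L$ does not by itself control the integral. I would split it at a threshold $\lambda=-R$: on $\{\lambda>-R\}$ the integrand is bounded and vague convergence applies directly, whereas on $\{\lambda<-R\}$ one needs a bound on $\int_{\lambda<-R}e^{-t\lambda}\,dM_L$ that is uniform in $L$ and vanishes as $R\to\infty$. This is exactly where the quantitative content of Proposition \ref{Prop:Weight} enters: the weighted estimate on the finite-volume PAM (equivalently on the diagonal of $e^{-t\cH_L}$) furnishes the required uniform control of the low-energy tail, while the domain hypothesis $\int e^{-2t\lambda}\mu_{u_0}(d\lambda)<\infty$ controls the limiting object. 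Passing to the limit then yields $e^{-t\cH}u_0=u(t,\cdot)$.

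Finally, the consequence follows by combining (i) and (ii): for $u_0$ in the domain one obtains $(e^{-t\cH}u_0)(x)=u(t,x)=\int_\R u_0(y)\,u^{(y)}(t,x)\,dy$, so that $(x,y)\mapsto u^{(y)}(t,x)$ is the integral kernel. Joint continuity follows from the spatial regularity in Proposition \ref{Prop:Weight}, which places $u^{(y)}(t,\cdot)$ in a local $H^\gamma$ with $\gamma$ close to $3/2$ and hence in $C^0$, giving continuity in $x$; continuity in $y$ comes from the Lipschitz dependence of the solution map on its initial datum combined with the continuity of $y\mapsto\delta_y$ in $H^{-1/2-\kappa}_{\mathrm{loc}}$. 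Alternatively, the self-adjointness of $e^{-t\cH}$ forces the reciprocity $u^{(y)}(t,x)=u^{(x)}(t,y)$, which upgrades one-variable continuity to joint continuity directly.
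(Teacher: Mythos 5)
Your part (i) is essentially the paper's own argument (the paper proceeds in two steps: it verifies the formula for continuous, compactly supported $u_0$, then extends to all of $L^2$ via the Riesz representation theorem using the bound \eqref{Eq:utu0}), and your weighted estimates for the Fubini step are the right ones. Two caveats: ``Fubini'' is not literally available, since $\xi(dz)$ is a distribution and not a signed measure, so the exchange of the $dy$-integral with the pairing against $\xi$ must be phrased as a vector-valued (Bochner/Young) integral argument using the continuity of $y\mapsto u^{(y)}(s,\cdot)$ in the weighted Sobolev topology; and before invoking uniqueness from Proposition \ref{Prop:Weight} you must check that your candidate $v$ actually lies in the space $\cE$, not merely that it is pointwise finite. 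Also, your ``alternative'' route to joint continuity of the kernel is wrong: symmetry plus separate continuity does not imply joint continuity (consider $xy/(x^2+y^2)$); your primary argument via Lipschitz dependence on the initial condition is the correct one and is what the paper uses.

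Part (ii) contains a genuine gap, and it sits exactly at the crux of the proposition. You write that ``classical parabolic theory identifies $e^{-t\cH_L}u_0$ with the finite-volume PAM \dots by uniqueness for the abstract Cauchy problem $\dot v=-\cH_L v$.'' There is no such classical theory here: the potential in finite volume is still white noise, the domain of $\cH_L$ is characterized only through the distorted derivative $f'-Bf$, and the product of $\xi$ with $e^{-s\cH_L}u_0$ only makes sense through Young integration exploiting the $3/2^-$ regularity of domain elements. Identifying the semigroup with the mild PAM solution is precisely the difficulty the proposition addresses, and it is no easier at finite volume than on $\R$; the paper resolves it directly in infinite volume by (a) restricting to spectrally truncated data $u_0$ with $E(\R\backslash A)u_0=0$ for bounded $A$, (b) writing the functional-calculus identity $e^{-t\cH}u_0=u_0+\int_0^t -\cH e^{-s\cH}u_0\,ds$, (c) testing against $\psi(s,x)=P_{t_0-s}(x_0-x)$ with the $\xi$-term interpreted as a Young integral, and (d) concluding by uniqueness from Proposition \ref{Prop:Weight}. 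Your plan defers this core step to a result you treat as known, so the proof does not close.

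Two further steps of your scheme would also fail as stated. First, the uniform-in-$L$ control of $\int_{\lambda<-R}e^{-t\lambda}\,dM_L$ requires heat-kernel bounds for $e^{-t\cH_L}$ that are uniform in $L$; Proposition \ref{Prop:Weight} is a full-space statement and provides no such finite-volume estimates, so obtaining them would essentially amount to re-proving the proposition in finite volume (and risks circularity, since in the paper the trace-type bounds of Lemma \ref{Lemma:TC} are \emph{consequences} of Proposition \ref{Prop:PAM}). Second, your extension from the dense set to the domain of $e^{-t\cH}$ is incomplete: since the spectrum of $\cH$ is unbounded below, it is not known (and not expected to be automatic) that $\cD^c_\R$ is contained in the domain of $e^{-t\cH}$, its density in the graph norm of $e^{-t\cH}$ is unclear, and closedness alone does not finish the argument because the PAM solution map is continuous only into a \emph{weighted} $L^2$ space, not into $L^2$. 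The paper's choice of dense set — vectors of the form $E([-n,n])u_0$ — avoids all three problems at once: they lie in every domain, they are dense in the graph norm, and the coincidence of limits can be read off in the weighted space.
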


This proposition shows that the operator $e^{-t\cH}$ and the solution of the PAM coincide. In particular, $e^{-t\cH}$ admits a unique continuous extension as an operator from the whole space $L^2$ into a weighted $L^2$ space. More generally, it admits a unique continuous extension as an operator from a weighted Sobolev space into another weighted Sobolev space, provided the parameters are chosen according to the solution theory of the PAM presented in Proposition \ref{Prop:Weight}. In the sequel, we will not distinguish $e^{-t\cH}$ from these (unique) extensions.

\begin{remark}
	This is a nice interplay between the PAM, which is constructed by general PDE arguments, and the operator $e^{-t\cH}$, whose definition relies on spectral arguments. Observe that the PDE arguments are powerful in that they allow to extend the set of functions on which $e^{-t\cH}$ acts. On the other hand, observe that for $u_0 \in L^2$, the solution theory for the PAM only ensures that $u(t,\cdot)$ belongs \emph{locally} to $L^2(\R)$ but not necessarily globally as it may grow at infinity, but the connection with $e^{-t\cH}$ asserts that for $u_0$ in the domain of $e^{-t\cH}$, the solution of the PAM actually belongs to $L^2(\R)$ globally.
\end{remark}
\begin{proof}
	Fix $\beta \in (0,1]$. For any given $\ell\in\R$, observe that $y\mapsto \delta_y$ is a continuous map from $\R$ into $H^{-1/2}_{w_{\beta,\ell}}$. This fact combined with Proposition \ref{Prop:Weight} ensures that $y\mapsto u^{(y)}(t,\cdot)$ is a continuous map with values in $H^{\gamma}_{w_{\beta,t+\ell}}$, for any given $\gamma < 3/2$ and any given $\ell\in\R$. (By Sobolev Embeddings, note also that $(x,y)\mapsto u^{(y)}(t,x)$ is jointly continuous). It is then not difficult to check that, for any continuous function $u_0$ with compact support, the function
	$$ x\mapsto \int_y u^{(y)}(t,x) u_0(y) dy\;,$$
	takes values in $H^{\gamma}_{w_{\beta,t+\ell}}$ and solves the PAM starting from $u_0$. We have therefore shown that (i) holds for all continuous functions $u_0$ with compact support. The next step is to extend this identity to the whole set $L^2$.\\
	
	We deduce from Proposition \ref{Prop:Weight} and from Sobolev Embeddings that
	\begin{equation}\label{Eq:utu0}
		\sup_{x} \sup_{u_0 \in L^2} \frac{\vert u(t,x)\vert}{w_{\beta,t}(x) \|u_0\|_{L^2}} < \infty\;.
	\end{equation}
	In particular, for any given $x$, $u_0\mapsto u(t,x)$ is a bounded linear form on $L^2$. By Riesz representation theorem, this linear form admits a representative in $L^2$. From the first part of the proof, combined with the density of the set of continuous functions with compact support in $L^2$, we deduce that this representative is the continuous function $y\mapsto u^{(y)}(t,x)$. This completes the proof of (i).\\
	
	We turn to the proof of (ii). Take $\beta' \in (\beta, 1)$ and set $w(x) := w_{\beta',1}(x)$. From \eqref{Eq:utu0}, we deduce that
	$$ \int_x  \Big(\sup_{u_0 \in L^2} \frac{\vert u(t,x)\vert}{w(x) \|u_0\|_{L^2}}\Big)^{2} dx < \infty\;.$$
	Consequently $u_0\mapsto u(t,\cdot)$ is a bounded operator from $L^2$ into a weigthed $L^2$ space.\\
	Suppose that we can show that $e^{-t\cH} u_0$ coincides with $u(t,\cdot)$ for a set of functions $u_0$ which is dense in the domain of $e^{-t\cH}$ (and therefore is also dense in $L^2$). This is sufficient to deduce that $e^{-t\cH}$ admits a unique continuous extension into an operator from the whole set $L^2$ into a weighted $L^2$ space, and this extension is of course the operator $u_0\mapsto u(t,\cdot)$. In particular, these two operators coincide on the domain of $e^{-t\cH}$ (viewed as an operator from $L^2$ into itself), and in turn, it ensures that the kernel of $e^{-t\cH}$ coincides with the solution of the PAM starting from a Dirac delta, thus yielding \eqref{Eq:PAMKernel}.\\
	
	It remains to prove the coincidence of the two operators for a convenient dense subset. Recall that $E$ is the projection-valued measure associated to $\cH$. To circumvent integrability issues, we will restrict ourselves to all functions $u_0 \in L^2$ such that $E(\R\backslash A) u_0 = 0$, for some arbitrary bounded set $A \subset \R$: such functions $u_0$ then belong to the domains of $\cH$ and $e^{-t\cH}$ for all $t\in \R$.\\
	The following identity holds (the integral on the r.h.s.~can be interpreted as a Bochner integral)
	$$ e^{-t\cH} u_0 = u_0 + \int_0^t -\cH e^{-s\cH} u_0 ds\;.$$
	In particular, $t\mapsto e^{-t\cH} u_0$ is a continuous function from $\R_+$ to $L^2$.\\
	Starting from the previous identity, we would like to obtain a weak form of the PAM. Integrations by parts show that for all compactly supported and smooth function $\psi:\R^2\to\R$
	\begin{align*}
		\langle e^{-t\cH} u_0, \psi(t,\cdot)\rangle &= \langle u_0, \psi(0,\cdot)\rangle + \int_0^t \langle e^{-s\cH} u_0, \partial_s \psi(s,\cdot) + \partial_x^2 \psi(s,\cdot)\rangle ds\\
		&-  \int_0^t \langle e^{-s\cH} u_0, \xi \psi(s,\cdot)\rangle ds\;,
	\end{align*}
	where we used the fact that $e^{-s\cH}u_0$ has H\"older regularity $3/2-$ (since it belongs to the domain of $\cH$), and so its product with $\xi$ is well-defined as a Young's integral.\\
	This identity can be applied to $\psi(s,x) := P_{t_0-s}(x_0-x)$ for some $t_0 > t$ and $x_0 \in\R$, and this yields
	$$ \langle e^{-t\cH} u_0, P_{t_0-t}(x_0-\cdot) \rangle = \langle u_0, P_{t_0}(x_0-\cdot)\rangle - \int_0^t \langle e^{-s\cH} u_0, \xi P_{t_0-s}(x_0-\cdot) \rangle ds\;.$$
	To conclude, we would like to pass to the limit $t_0 \downarrow t$ and obtain the identity
	$$  e^{-t\cH} u_0(x_0) = \langle u_0, P_{t}(x_0-\cdot)\rangle - \int_0^t \langle e^{-s\cH} u_0, \xi P_{t-s}(x_0-\cdot) \rangle ds\;.$$
	This can be done easily on the l.h.s., and on the first term on the r.h.s. Regarding the second term on the r.h.s., this can be checked by a direct estimate that relies on Young's integral.
\end{proof}

\subsection{Growth estimates on the generalized eigenfunctions}\label{Subsec:Growth}

We start with a disintegration result of the projection-valued measure $E(d\lambda)$ into the product of a kernel $E(\lambda,x,y)$ with some spectral measure $\varrho(d\lambda)$. The singularity of white noise makes the proof of this result more delicate, see below for more explanations.

\begin{proposition}\label{Prop:Desintegration}
	There exists a spectral measure $\varrho$, and a jointly measurable function $(\lambda,x,y)\mapsto E(\lambda,x,y)$ such that for all bounded and measurable functions $f,g:\R\to\R$ with compact supports and all bounded measurable set $A\in \cB(\R)$
	$$ \langle f, E(A) g\rangle = \int_\R \mathbf{1}_{A}(\lambda) \int_{\R^2} E(\lambda,x,y) f(x) g(y) dx \,dy \;\varrho(d\lambda)\;.$$
	For $\varrho$-a.a.~$\lambda$, we have
	$$ E(\lambda,x,y) = \sum_{i=1}^{I(\lambda)} f^{(i)}_\lambda(x) f^{(i)}_\lambda(y)\;,$$
	where $I(\lambda) \in\{1,2\}$ and $f^{(i)}_\lambda$ are linearly independent solutions of $-y_\gl'' +  y_\gl \xi = \lambda y_\gl$.
\end{proposition}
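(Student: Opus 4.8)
The plan is to read the kernel off the spectral representation already constructed, and then to diagonalise the Radon--Nikodym matrix of $M$ separately at each energy $\gl$. Write $Y_\gl(x) := \big(y^N_\gl(x),\, y^D_\gl(x)\big)^\intercal$ for the vector of Neumann and Dirichlet solutions of \eqref{Eq:ODEz}. Since $E(A) = U^{-1}\mathbf{1}_A U$ by Proposition \ref{propo:Epvm} and $U$ is unitary (Proposition \ref{Prop:Parseval}), for real $f,g\in L^2(\R)$ and $A\in\cB(\R)$ we have
$$ \langle f, E(A) g\rangle = \langle \mathbf{1}_A\, Uf,\, Ug\rangle_{L^2(dM)} = \int_A \big(Uf(\gl)\big)^\intercal\, M(d\gl)\, Ug(\gl)\;. $$

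First I would make $Uf$ explicit for $f$ bounded with compact support. For such $f$ the truncated transforms $F^N_n, F^D_n$ of \eqref{Eq:FNL} are eventually constant in $n$ (as soon as $n/2$ exceeds the support of $f$) and equal to $\int_\R f(x) y^N_\gl(x)\,dx$ and $\int_\R f(x) y^D_\gl(x)\,dx$; hence the $L^2(dM)$-element $Uf$ admits the continuous representative $Uf(\gl) = \int_\R f(x) Y_\gl(x)\,dx$, and likewise for $g$. Next I would disintegrate $M(d\gl) = N(\gl)\,\sigma(d\gl)$, where $\sigma := \Tr(M)$ is the spectral measure of Proposition \ref{Prop:Sigma} and $N$ is the matrix of Radon--Nikodym derivatives, which by Lemma \ref{Lemma:M} is for $\sigma$-almost every $\gl$ a symmetric non-negative matrix with $\Tr N(\gl) = 1$. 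Substituting the integral representations of $Uf$ and $Ug$ and applying Fubini --- licit since $f,g$ are bounded with compact support, $(\gl,x)\mapsto Y_\gl(x)$ is bounded on the relevant compact set, $\|N(\gl)\|\le 1$ and $\sigma(A)<\infty$ --- yields
$$ \langle f, E(A) g\rangle = \int_\R \mathbf{1}_A(\gl) \int_{\R^2} \big(Y_\gl(x)^\intercal N(\gl) Y_\gl(y)\big)\, f(x)\, g(y)\, dx\,dy\;\sigma(d\gl)\;. $$
This is the desired identity with $\varrho := \sigma$ and $E(\gl,x,y) := Y_\gl(x)^\intercal N(\gl) Y_\gl(y)$. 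Joint measurability of $E(\gl,x,y)$ follows because $(\gl,x)\mapsto Y_\gl(x)$ is jointly continuous --- the matrix form of the ODE in Lemma \ref{Lemma:ODEz} depends continuously on $\gl$ and $B$ is continuous --- while $\gl\mapsto N(\gl)$ is measurable.

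It then remains to diagonalise fibrewise. For $\sigma$-a.e.\ $\gl$ I would write $N(\gl) = \sum_{i=1}^{I(\gl)} \mu_i(\gl)\, v_i(\gl) v_i(\gl)^\intercal$ with $\mu_i(\gl)>0$, orthonormal $v_i(\gl)$, and $I(\gl) = \operatorname{rank} N(\gl) \in \{1,2\}$ (indeed $I(\gl)\ge 1$ because $\Tr N(\gl)=1$). Setting $f^{(i)}_\gl := \sqrt{\mu_i(\gl)}\; v_i(\gl)^\intercal Y_\gl$, each $f^{(i)}_\gl$ is a nonzero linear combination of $y^N_\gl$ and $y^D_\gl$, hence a solution of $-y_\gl''+y_\gl\xi = \gl y_\gl$, and the $f^{(i)}_\gl$ are linearly independent since the $v_i(\gl)$ are orthogonal and $y^N_\gl, y^D_\gl$ are linearly independent. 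A direct computation then gives $E(\gl,x,y) = \sum_{i=1}^{I(\gl)} f^{(i)}_\gl(x) f^{(i)}_\gl(y)$.

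I expect the main obstacle to be organisational rather than computational: because the singularity of $\xi$ rules out the usual resolvent-kernel / pointwise-eigenfunction route, the whole argument has to be routed through the matrix measure $M$ and the unitary $U$, so the delicate points are \emph{(a)} extracting the genuine integral representative $Uf(\gl)=\int f\, Y_\gl$ of the abstract $L^2(dM)$-element, and \emph{(b)} justifying the interchange of the $x,y$ integrations with the integration against $\sigma$. Once these are in place the fibrewise diagonalisation is elementary $2\times 2$ linear algebra.
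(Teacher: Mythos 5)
Your proof is correct, but it takes a genuinely different route from the paper's. The paper proves Propositions \ref{Prop:Desintegration} and \ref{Prop:GEF} together, entirely inside the PAM framework of Section \ref{Sec:PAM}: it constructs $\varrho(A) := \Tr(M_{w^{-1}}E(A)M_{w^{-1}})$ from the trace-class bound of Lemma \ref{Lemma:TC}, invokes an abstract expansion theorem of Berezanskii to produce an operator-valued density $\Psi(\lambda)$ of the weighted p.v.m., and then --- this is the step the paper itself flags as the delicate one --- identifies the range of $\Psi(\lambda)$ with solutions of $-y''+y\xi=\lambda y$ through the semigroup identity $(e^{-t\cH}-e^{-t\lambda})f_\lambda=0$, precisely because the singularity of $\xi$ forbids moving $\cH-\lambda$ from a test function onto $f_\lambda$. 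You instead route everything through the Section \ref{Sec:Expo} machinery: $E=U^{-1}\mathbf{1}U$ (Proposition \ref{propo:Epvm}), $\varrho:=\sigma=\Tr(M)$, $N:=dM/d\sigma$, and the explicit kernel $Y_\gl(x)^\intercal N(\gl)Y_\gl(y)$ with $Y_\gl=(y^N_\gl,y^D_\gl)^\intercal$. This makes the paper's central difficulty evaporate: your $f^{(i)}_\gl$ are by construction linear combinations of $y^N_\gl$ and $y^D_\gl$, hence solutions of \eqref{Eq:ODEz}, with no semigroup or PAM input whatsoever. There is no circularity in this (Propositions \ref{Prop:Parseval}, \ref{propo:Epvm}, \ref{Prop:Sigma} and Lemma \ref{Lemma:M} are proved independently of Section \ref{Sec:PAM}), and your argument is shorter and more concrete. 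What it costs: first, it couples Section \ref{Sec:PAM} to Section \ref{Sec:Expo}, undoing the paper's stated independence of the two; second, and more substantially, the paper's method delivers the growth estimate of Proposition \ref{Prop:GEF} simultaneously (via the bound $e^{-t\lambda}f_\gl=e^{-t\cH}f_\gl$ and Proposition \ref{Prop:Weight}), whereas your construction gives no control at all on the growth of your $f^{(i)}_\gl$ --- a priori they could grow exponentially. With your disintegration, Proposition \ref{Prop:GEF} would require a separate argument, e.g.\ observing that any two disintegrations of $E(d\lambda)$ agree up to an a.e.\ positive scalar factor (since the underlying spectral measures are equivalent), which transfers the paper's estimate, or rerunning the paper's semigroup step for your kernel.

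One genuine, though fixable, gap in your write-up: you assert that Lemma \ref{Lemma:M} gives non-negative definiteness of $N(\gl)$ for $\sigma$-a.a.\ $\gl$, but the properties stated there ($\xi,\zeta\ge 0$ and $2|\eta(A)|\le\xi(A)+\zeta(A)$) do not imply it: the symmetric matrix with diagonal $(0,1)$ and off-diagonal entries $1/2$ satisfies both constraints with trace $1$, yet has a negative eigenvalue. Your fibrewise decomposition with $\mu_i(\gl)>0$ needs positive semi-definiteness, so you must supply the missing line: for every fixed $v\in\R^2$, the measure $v^\intercal M_L(\cdot)v=\sum_k\big(v_1\varphi_{k,L}(0)+v_2\varphi_{k,L}'(0)\big)^2\delta_{\gl_{k,L}}$ is non-negative, hence so is its vague limit $v^\intercal M(\cdot)v$; running over a countable dense set of $v$ and using continuity in $v$ yields $N(\gl)\ge 0$ for $\sigma$-a.a.\ $\gl$. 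With that supplement, your argument is complete.
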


\begin{remark}
	Once the Anderson localization of $\cH$ is established, we can deduce that $I(\lambda) = 1$ for $\varrho$-a.a.~$\lambda$.
\end{remark}

The functions $f^{(i)}_\lambda$ that appear in this statement are called \emph{generalized eigenfunctions}. The next result shows that these generalized eigenfunctions do not grow exponentially fast at infinity.
\begin{proposition}\label{Prop:GEF}
	Almost surely, for $\varrho$-almost all $\gl \in\R$ and for every $i\le I(\lambda)$
	\begin{equation}\label{Eq:Growth}
		\frac{ \ln (1+ \vert f^{(i)}_\gl(x)\vert)}{\vert x\vert} \to 0\;,\quad  x \to \pm\infty\;.
	\end{equation}
\end{proposition}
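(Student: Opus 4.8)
The plan is to derive the subexponential bound \eqref{Eq:Growth} from a weighted $L^2$ bound on the generalized eigenfunctions, obtained by combining the disintegration of Proposition \ref{Prop:Desintegration} with the fact that the PAM semigroup becomes Hilbert--Schmidt once composed with a suitable weight. Fix $t>0$ and $\beta\in(0,1)$. From Proposition \ref{Prop:Weight} and the Sobolev embedding stated after it, the solution of the PAM from $u_0\in L^2$ satisfies $\sup_x |u(t,x)|/w_{\beta,t}(x)\le C\|u_0\|_{L^2}$; hence for each fixed $x$ the functional $u_0\mapsto u(t,x)=\int u^{(y)}(t,x)u_0(y)\,dy$ has $L^2$-norm at most $Cw_{\beta,t}(x)$, i.e. $\int_\R |u^{(y)}(t,x)|^2\,dy\le C^2 w_{\beta,t}(x)^2$. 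Since $e^{-t\cH}(y,x)=u^{(y)}(t,x)$ by \eqref{Eq:PAMKernel}, writing $M_w$ for multiplication by $w^{-1}$ with $w:=w_{\beta,\ell}$ and $\ell>t$, I get
$$\|M_w e^{-t\cH}\|_{\mathrm{HS}}^2 = \int_\R w(x)^{-2}\Big(\int_\R |u^{(y)}(t,x)|^2\,dy\Big)\,dx \le C^2\int_\R w(x)^{-2} w_{\beta,t}(x)^2\,dx < \infty,$$
the integral converging because $\ell>t$ and $\beta<1$.

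Next I would feed this into a trace computation. For a bounded interval $J=[-R,R]$ the operator $e^{-t\cH}E(J)$ is genuinely defined ($\lambda\mapsto e^{-t\lambda}$ being bounded on $J$) and agrees with the PAM extension on $\mathrm{Ran}\,E(J)$ by Proposition \ref{Prop:PAM}(ii). Spectral calculus together with the disintegration of Proposition \ref{Prop:Desintegration}, applied to the diagonal of $e^{-t\cH}E(J)e^{-t\cH}=e^{-2t\cH}E(J)$, gives
$$\|M_w e^{-t\cH}E(J)\|_{\mathrm{HS}}^2 = \int_\R w(x)^{-2}\int_J e^{-2t\gl}E(\gl,x,x)\,\varrho(d\gl)\,dx \le \|M_w e^{-t\cH}\|_{\mathrm{HS}}^2 < \infty.$$
Since $E(\gl,x,x)=\sum_{i}|f^{(i)}_\gl(x)|^2\ge 0$ and $e^{-2t\gl}\ge e^{-2tR}$ on $J$, Tonelli's theorem yields $\int_\R w(x)^{-2}E(\gl,x,x)\,dx<\infty$ for $\varrho$-a.e. $\gl\in J$, and letting $R\to\infty$ along integers this holds for $\varrho$-a.e. $\gl\in\R$. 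In particular every generalized eigenfunction obeys $\int_\R w(x)^{-2}|f^{(i)}_\gl(x)|^2\,dx<\infty$ with $w$ subexponential, so $\int_n^{n+1}|f^{(i)}_\gl|^2=e^{o(n)}$.

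It then remains to upgrade this weighted $L^2$ control to the pointwise statement \eqref{Eq:Growth}. The spectral side only bounds $f^{(i)}_\gl$ itself, so I would first recover its distorted derivative: using $\tau f^{(i)}_\gl=\gl f^{(i)}_\gl$ and testing against $\chi^2 f^{(i)}_\gl$ for a cutoff $\chi$ equal to $1$ on $[n,n+1]$ and supported in $(n-1,n+2)$, the integration by parts underlying \eqref{Eq:WaWb} (as in the proof of Proposition \ref{Prop:Circle}) bounds $\int_n^{n+1}|(f^{(i)}_\gl)'|^2$ by $C_n\int_{n-1}^{n+2}|f^{(i)}_\gl|^2$, with $C_n$ polynomial in $|\gl|$ and in $\sup_{[n-1,n+2]}|B|$. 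Hence the full state $v:=(f^{(i)}_\gl,\,(f^{(i)}_\gl)'-Bf^{(i)}_\gl)$ satisfies $\int_n^{n+1}|v|^2=e^{o(n)}$, so some $x_n\in[n,n+1]$ has $|v(x_n)|=e^{o(n)}$. The subexponential growth of the propagator over unit intervals — the a.s. statement $\sup_{x\in[n,n+1]}\ln\|U_\gl(n,x)\|=o(n)$, which is exactly the $X_n/n\to0$ estimate of Subsection \ref{Subsec:Lyapunov} — then propagates this to $\sup_{x\in[n,n+1]}|v(x)|=e^{o(n)}$, and a fortiori $\sup_{[n,n+1]}|f^{(i)}_\gl|=e^{o(n)}$, giving \eqref{Eq:Growth} at $+\infty$; the argument at $-\infty$ is identical.

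The main obstacle is precisely this final upgrade. Because the spectral decomposition delivers $f^{(i)}_\gl$ but not its derivative, one cannot simply invoke elliptic regularity and must instead exploit the eigenequation to reconstruct $(f^{(i)}_\gl)'$ and then the unit-interval growth of the propagator, both of which are sensitive to the singularity of $\xi$. A related point requiring care is uniformity in $\gl$: I would need $\sup_{[0,n]}|B|=o(n)$ together with the propagator estimate to hold simultaneously for all $\gl$ in compact sets (the latter through the $\lambda$-uniform diffusion estimates of \cite{DL_Crossover}), so that the exceptional null set does not depend on $\gl$ and the conclusion is valid for $\varrho$-almost every $\gl$ on a single almost-sure event.
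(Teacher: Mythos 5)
Your proof is correct in its essentials, but it takes a genuinely different route from the paper's. The paper never passes through local elliptic estimates: having established (in the course of proving Proposition \ref{Prop:Desintegration}) the identity $(e^{-t\cH}-e^{-t\gl})f^{(i)}_\gl = 0$, it reads \eqref{Eq:Growth} directly off the weighted solution theory of the PAM --- since $f^{(i)}_\gl$ lies in an $L^2$ space weighted by the subexponential weight $w_{\beta',1}$ with $\beta'<1$, Proposition \ref{Prop:Weight} and Sobolev embedding show that $e^{-t\cH}f^{(i)}_\gl = e^{-t\gl}f^{(i)}_\gl$ is continuous and bounded by a multiple of $w_{\beta',1+t}$, which is $e^{o(|x|)}$; the growth estimate is thus inherited in one stroke from the parabolic flow. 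Your route is the classical Berezanskii--Kovalenko one: a weighted $L^2$ bound (your Hilbert--Schmidt/trace computation recovers what the paper's disintegration already encodes, namely $\int_\R w(x)^{-2}E(\gl,x,x)\,dx = \Tr\Psi(\gl) = 1$ for $\varrho$-a.e.\ $\gl$), then a Caccioppoli estimate from the eigenequation in distorted form, then a pointwise upgrade; your handling of the singular potential via $f'-Bf$ and the continuous function $B$ is exactly right. What each buys: the paper's argument is two lines once the PAM machinery exists, while yours shows the growth bound needs only the disintegration plus the eigenequation, with no second pass through the parabolic flow. One substantive simplification, which also dissolves the difficulty you flag at the end: the propagator step is unnecessary. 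Once you have $\int_n^{n+1}|f^{(i)}_\gl|^2 = e^{o(n)}$ and, by Caccioppoli, $\int_n^{n+1}|(f^{(i)}_\gl)'|^2 \le C\big(1+|\gl|+\sup_{[n-1,n+2]}B^2\big)e^{o(n)} = e^{o(n)}$, the elementary one-dimensional bound $\sup_{x\in[n,n+1]}|f(x)|^2 \le \int_n^{n+1}|f|^2 + 2\big(\int_n^{n+1}|f|^2\big)^{1/2}\big(\int_n^{n+1}|f'|^2\big)^{1/2}$ gives the pointwise estimate directly. The only random inputs are then the a.s.\ statement that $\int_\R w^{-2}E(\gl,x,x)\,dx<\infty$ for $\varrho$-a.e.\ $\gl$ and the single a.s.\ event $\sup_{[0,n]}|B| = O(\sqrt{n\log\log n})$, which does not depend on $\gl$; the remaining $\gl$-dependence of the constants is polynomial and deterministic, so the conclusion holds for $\varrho$-a.e.\ $\gl$ on one almost-sure event, with no need for the uniform-in-$\gl$ propagator estimates or chaining over compact $\gl$-sets that you were worried about. (A minor point: the identification of the Hilbert--Schmidt norm of $e^{-t\cH}E(J)$ composed with multiplication by $w^{-1}$ with the diagonal integral $\int_\R w(x)^{-2}\int_J e^{-2t\gl}E(\gl,x,x)\,\varrho(d\gl)\,dx$ --- trace of a positive operator versus integral of its kernel on the diagonal --- deserves a line of justification, though it is routine given the positivity and continuity available here.)
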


This is to be compared with the exponential growth stated in Oseledec Theorem, see Subsection \ref{Subsec:Lyapunov}. Indeed the latter asserts that, for any given $\lambda$, almost surely any given solution of $-y_\gl'' + y_\gl \xi = \lambda y_\gl$ either grows or decays exponentially fast at $+\infty$ (resp.~at $-\infty$). On the other hand, Proposition \ref{Prop:GEF} implies that almost surely, for almost all $\lambda$ w.r.t.~the spectral measure the generalized eigenfunctions do not grow exponentially fast at $\pm\infty$. At first sight, the combination of these two facts seems to imply that the generalized eigenfunctions decay exponentially fast at $\pm\infty$. However, there is a very delicate $\lambda$-dependence of the negligible sets in the first assertion. We will see in Section \ref{Sec:Kotani} how the Kotani-Simon approach takes advantage of these two facts to show that the spectrum is pure point.\\

Let us now explain the difficulty arising from the singularity of white noise in establishing the last two propositions. The delicate point consists in showing that $f^{(i)}_\lambda$ is a solution of $-y_\gl'' + y_\gl \xi = \lambda y_\gl$. Indeed, by applying standard arguments one can show the disintegration, the fact that $f^{(i)}_\gl$ belongs to $L^2_{\mbox{\tiny loc}}$ and satisfies the growth estimate of Proposition \ref{Prop:GEF}, together with the identity
\begin{equation}\label{Eq:fgl}
	\langle f^{(i)}_\gl, (\cH-\lambda)u_0 \rangle = 0\;,
\end{equation}
for a dense set of functions $u_0 \in \cD(\cH)$. Heuristically, one then deduces that $\langle (\cH-\lambda) f^{(i)}_\gl, u_0 \rangle = 0$ so that, by density, $(\cH-\lambda) f^{(i)}_\gl$ should vanish and therefore $f^{(i)}_\gl$ should be a solution of $-y_\gl'' + y_\gl \xi = \lambda y_\gl$. Unfortunately, since $\xi$ is distribution-valued, it cannot be multiplied with an arbitrary element in $L^2_{\mbox{\tiny loc}}$ and therefore one cannot swap the operator $\cH-\lambda$ from $u_0$ to $f^{(i)}_\gl$.\\
Such an issue is briefly mentioned in~\cite[Rk 1 p.153]{Kovalenko}, but we have not found any result that covers a singular situation like ours.\\

To circumvent the issue discussed above, we rely on the semigroup $e^{-t\cH}$ and establish, instead of \eqref{Eq:fgl}, the identity
$$ \langle f^{(i)}_\gl, (e^{-t\cH}-e^{-t \lambda})u_0 \rangle = 0\;.$$
Since the semigroup can be applied to a large collection of functions (thanks to the connection with the PAM),  we can deduce from such an equation that $ \langle (e^{-t\cH}-e^{-t \lambda}) f^{(i)}_\gl, u_0 \rangle = 0$ and then, by density, $(e^{-t\cH}-e^{-t \lambda}) f^{(i)}_\gl = 0$ which allows to conclude.

\bigskip

Before we proceed to the proofs of Propositions \ref{Prop:Desintegration} and \ref{Prop:GEF}, we collect a preliminary fact.

\begin{lemma}\label{Lemma:TC}
	Fix $\beta'\in (0,1]$ and set $w(x) :=  \exp((1+\vert x \vert)^{\beta'})$. For any given $t>0$, the operator $M_{w^{-1}} e^{-t \cH} M_{w^{-1}}$ is a trace class operator from $L^2$ into itself, where $M_{w^{-1}}$ is the multiplication operator by $w^{-1}$.
\end{lemma}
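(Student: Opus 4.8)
The plan is to factor the operator as a product of two Hilbert–Schmidt operators, exploiting the identity $e^{-t\cH}=e^{-\frac{t}{2}\cH}e^{-\frac{t}{2}\cH}$, and then to invoke the standard fact that a product of two Hilbert–Schmidt operators is trace class. Throughout I will use that, by Proposition \ref{Prop:PAM}, $e^{-s\cH}$ has the jointly continuous kernel $e^{-s\cH}(y,x)=u^{(y)}(s,x)$, where $u^{(y)}$ solves the PAM started from $\delta_y$, and that this kernel is symmetric, $u^{(y)}(s,x)=u^{(x)}(s,y)$, since $e^{-s\cH}$ is self-adjoint.

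The first step is a pointwise bound on this kernel. Since $\delta_y$ is supported at $y$, a direct computation from the definition of $\|\cdot\|_{H^{-1/2}_{w_{\beta,\ell}}}$ gives $\|\delta_y\|_{H^{-1/2}_{w_{\beta,\ell}}}\le C\,w_{\beta,\ell}(y)^{-1}$ uniformly in $y$ (using that $w_{\beta,\ell}$ is comparable at points within bounded distance when $\beta\le 1$). Feeding $u_0=\delta_y\in H^{-1/2}_{w_{\beta,\ell}}$ into the $L^\infty$ consequence of Proposition \ref{Prop:Weight} (Sobolev embedding, with $\alpha=-1/2$ and $\gamma\in(1/2,3/2)$) then yields, for any $\beta\in(0,1]$ and any $\ell\in\R$,
$$ |u^{(y)}(s,x)| \le C\,\frac{w_{\beta,\ell+s}(x)}{w_{\beta,\ell}(y)}\;,\quad x,y\in\R\;, $$
with $C$ uniform in $x,y$; by symmetry of the kernel the same bound holds with the roles of $x$ and $y$ exchanged.

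Next I introduce the integral operators $B_1$ and $B_2$ with kernels $B_1(z,x):=w(x)^{-1}u^{(z)}(\tfrac t2,x)$ and $B_2(y,z):=u^{(y)}(\tfrac t2,z)\,w(y)^{-1}$. Using the bound above with $s=t/2$, the Hilbert–Schmidt norms factorize, e.g.
$$ \|B_1\|_{HS}^2 \le C^2 \Big(\int_\R \frac{w_{\beta,\ell+t/2}(x)^2}{w(x)^2}\,dx\Big)\Big(\int_\R \frac{dz}{w_{\beta,\ell}(z)^2}\Big)\;, $$
and similarly for $B_2$, where one uses the symmetric form of the kernel bound to place the decaying weight on the correct variable. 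Recalling that $w=w_{\beta',1}$, I choose $\beta\in(0,\beta')$ and $\ell>0$: the first factor converges because $(1+|x|)^{\beta'}$ dominates $(1+|x|)^{\beta}$ at infinity, and the second converges because $\ell>0$ makes $\exp(-2\ell(1+|z|)^{\beta})$ integrable. Hence $B_1$ and $B_2$ are Hilbert–Schmidt, in particular bounded on $L^2$, so the formal kernel manipulations are legitimate.

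Finally I identify the product. The Chapman–Kolmogorov relation $u^{(y)}(t,x)=\int_\R u^{(y)}(\tfrac t2,z)\,u^{(z)}(\tfrac t2,x)\,dz$ follows by restarting the PAM at time $t/2$ from $u^{(y)}(t/2,\cdot)$ and invoking uniqueness together with the representation of Proposition \ref{Prop:PAM}(i). Consequently $B_1B_2$ has kernel $w(x)^{-1}u^{(y)}(t,x)\,w(y)^{-1}$, that is $B_1B_2=M_{w^{-1}}e^{-t\cH}M_{w^{-1}}$, which is therefore trace class. I expect this last identification to be the main obstacle: since $u^{(y)}(t/2,\cdot)$ grows at infinity (it lies in a weighted, not plain, $L^2$ space once $\ell>0$), one must check that the representation formula and the semigroup property persist in the weighted setting, which is where the solution theory of Proposition \ref{Prop:Weight} and the Lipschitz continuity of its solution map enter. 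A more pedestrian alternative that avoids Chapman–Kolmogorov is to note that the operator is positive, with continuous kernel and integrable diagonal — the diagonal bound $|u^{(x)}(t,x)|\le C\,w_{\beta,t}(x)$ gives $\int_\R w(x)^{-2}|u^{(x)}(t,x)|\,dx<\infty$ for $\beta<\beta'$ — and to conclude via Mercer's Theorem (Lemma \ref{Theorem:Mercer}).
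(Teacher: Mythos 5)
Your main argument is correct and, at its core, it is the paper's own proof: both factor $M_{w^{-1}} e^{-t\cH} M_{w^{-1}}$ into a product of two Hilbert--Schmidt operators through the half-time semigroup and conclude by the standard HS$\times$HS $\Rightarrow$ trace class fact. The only real difference is how the Hilbert--Schmidt property is obtained. The paper avoids pointwise kernel estimates: from the $L^2\to L^\infty_{w_{\beta,t}}$ bound that follows from Proposition \ref{Prop:Weight} and Sobolev embedding, together with the Riesz representation of the bounded linear form $u_0\mapsto u(t,x)$, it gets the column-wise bound $\| e^{-t\cH}(\cdot,x)\|_{L^2}\le C\, w_{\beta,t}(x)$, hence $M_{w^{-1}}e^{-t\cH}$ is Hilbert--Schmidt, and then $M_{w^{-1}}e^{-2t\cH}M_{w^{-1}}=(M_{w^{-1}}e^{-t\cH})(M_{w^{-1}}e^{-t\cH})^{*}$ is trace class ($t>0$ being arbitrary). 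Your route — the two-variable pointwise bound $|u^{(y)}(s,x)|\le C\, w_{\beta,\ell+s}(x)/w_{\beta,\ell}(y)$ obtained by feeding $u_0=\delta_y$ into Proposition \ref{Prop:Weight}, using $\|\delta_y\|_{H^{-1/2}_{w_{\beta,\ell}}}\lesssim w_{\beta,\ell}(y)^{-1}$ — is a perfectly valid substitute (it is exactly the remark following Proposition \ref{Prop:Weight} specialized to Dirac data), at the price of this extra estimate; note also that your two HS computations are redundant since $B_2=B_1^{*}$. Finally, you are right that both proofs (the paper's included, although it is silent on this point) rest on the kernel-level semigroup identity; your proposed verification goes through, and is cleanest if one constructs $u^{(y)}$ in a space with $\ell<-t/2$, so that $u^{(y)}(t/2,\cdot)\in L^2$ and Proposition \ref{Prop:PAM}(i) together with uniqueness can be applied after restarting at time $t/2$.

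One caution about your ``pedestrian alternative'': as written it has a gap, because the positivity of $M_{w^{-1}}e^{-t\cH}M_{w^{-1}}$ is not free. Here $e^{-t\cH}$ is the PAM extension applied to functions (such as $w^{-1}f$ with $f$ compactly supported) that need not lie in the spectral domain of $e^{-t\cH}$ — the paper stresses that this domain does not even contain $\cD(\cH)$ — so $\langle w^{-1}f, e^{-t\cH} w^{-1}f\rangle\ge 0$ cannot be read off the spectral theorem. The natural way to get positivity is precisely the factorization $B_1B_2=B_2^{*}B_2$ you were trying to bypass (alternatively, one can approximate $g=w^{-1}f$ by $E([-n,n])g$ and use that local convergence of $e^{-t\cH}E([-n,n])g$ suffices to pass to the limit in the pairing against the compactly supported $g$, but this requires an argument). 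Moreover, Lemma \ref{Theorem:Mercer} takes the decomposition \eqref{dec_kernel_Mercer} as a hypothesis, so compactness, self-adjointness and positivity of the operator must all be established before it can be invoked; it does not by itself convert ``continuous kernel with integrable diagonal'' into trace class.
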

\begin{proof}
	Recall that from Proposition \ref{Prop:Weight} and from Sobolev Embeddings that for any $\beta \in (0,\beta')$
	$$ \sup_{x} \sup_{u_0 \in L^2} \frac{\vert u(t,x)\vert}{w_{\beta,t}(x) \|u_0\|_{L^2}} < \infty\;,$$
	and therefore
	$$ \int_x  \Big(\sup_{u_0 \in L^2} \frac{\vert u(t,x)\vert}{w(x) \|u_0\|_{L^2}}\Big)^{2} dx < \infty\;.$$
	As already mentioned in the proof of Proposition \ref{Prop:PAM}, for any given $x$, $u_0\mapsto u(t,x)$ is a bounded linear form on $L^2$. Since $u(t,x) = \int_y  e^{-t\cH}(y,x)  u_0(y) dy$ we deduce that
	$$ \sup_{u_0 \in L^2} \frac{\vert u(t,x)\vert}{\|u_0\|_{L^2}} = \| e^{-t\cH}(\cdot,x) \|_{L^2}\;,$$
	and consequently
	$$ \int_x \int_y \Big( w(x)^{-1} e^{-t\cH}(y,x) \Big)^2 dy dx< \infty\;.$$
	Denoting by $M_{w^{-1}}$ the multiplication operator by $w^{-1}$, we deduce that $M_{w^{-1}} e^{-t\cH} $ is Hilbert Schmidt, that is, $M_{w^{-1}} e^{-2t \cH} M_{w^{-1}}$ is trace class.
\end{proof}

\begin{proof}[Proof of Propositions \ref{Prop:Desintegration} and \ref{Prop:GEF}]
	
	\textit{Step 1: Construction of $\varrho$.}
	Using the previous lemma, we construct a spectral measure for $\cH$. Fix $t>0$. For any bounded Borel set $A\subset \R$, there exists $C>0$ such that $\mathbf{1}_A(y) \le C e^{-t y}$ for all $y\in \R$. The spectral theorem then yields for all $f\in L^2$ the bound
	$$ \langle w^{-1} f, E(A) w^{-1} f \rangle \le C \langle w^{-1} f, e^{-t\cH} w^{-1}f \rangle\;,$$
	which ensures that $M_{w^{-1}} E(A) M_{w^{-1}}$ is trace class.
	
	Set $\varrho(A) := \Tr(M_{w^{-1}} E(A) M_{w^{-1}})$ and note that $\varrho$ is a non-negative Radon measure. It is easy to check that $\varrho$ is a spectral measure for $\cH$, that is
	$$ \varrho(A) = 0 \Leftrightarrow E(A) = 0\;.$$
	
	\textit{Step 2: Disintegration.}
	By~\cite[V, Th 1.1]{Berezanskii}, we deduce the existence of a measurable operator-valued function $\lambda \mapsto \Psi(\lambda)$ such that\footnote{The integral on the right converges in operator norm whenever $A$ is bounded}
	$$ M_{w^{-1}} E(A) M_{w^{-1}} = \int_A \Psi(\lambda) \varrho(d\lambda)\;,$$
	and such that for $\varrho$-a.a.~$\lambda$, $\Psi(\lambda)$ is a non-negative self-adjoint operator, with a trace equal to $1$, that can be obtained as the weak limit of
	$$ \frac1{\varrho(A)} M_{w^{-1}} E(A) M_{w^{-1}}\;,$$
	on any sequence of bounded Borel sets $A \downarrow \{\lambda\}$.
	
	\textit{Step 3: On the kernel of $\Psi(\lambda)$.}
	Take $u_0 \in L^2$ with compact support (that is, $u_0$ equals $0$ a.e.~on the complement of some compact set). Such a function belongs to $H^{0}_{w_{\beta',\ell}}$ for any $\ell \in \R$. Choosing $\ell < -t-1$ and observing that $w=w_{\beta',1}$, we deduce from Proposition \ref{Prop:Weight} that $M_w(e^{-t\cH}-e^{-t\lambda})u_0 \in L^2$.
	
	We claim that for $\varrho$-a.a.~$\lambda$, $\Psi(\lambda) M_w (e^{-t\cH}-e^{-t\lambda})u_0 = 0$. Indeed, we have for any $f\in L^2$ with compact support
	\begin{align*}
		\langle \Psi(\lambda) M_w (e^{-t\cH}-e^{-t\lambda})u_0, f\rangle &= \lim_{A\downarrow \{\lambda\}} \frac1{\varrho(A)} \langle M_{w^{-1}} E(A) (e^{-t\cH}-e^{-t\lambda})u_0, f\rangle\;,
	\end{align*}
	and
	\begin{align*}
		\langle M_{w^{-1}} E(A) (e^{-t\cH}-e^{-t\lambda})u_0, f\rangle &=  \langle E(A) (e^{-t\cH}-e^{-t\lambda})u_0, M_{w^{-1}} f\rangle\\
		&= \int_A (e^{-t \mu} - e^{-t\lambda}) \langle E(d\mu)u_0, w^{-1} f\rangle\;.
	\end{align*}
	The measure $\langle E(d\mu)u_0, w^{-1} f\rangle$ is absolutely continuous w.r.t.~the measure $\varrho(d\mu)$, since the latter is a spectral measure. Moreover,
	$$ \langle E(C)u_0, w^{-1} f\rangle = \langle M_{w^{-1}} E(C)M_{w^{-1}} wu_0, f\rangle = \int_C h(\mu) \varrho(d\mu)\;,$$
	for any bounded Borel set $C$ where
	$$ h(\mu) := \frac12 \Big( \langle wu_0+f , \Psi(\mu)(wu_0 + f)\rangle -  \langle wu_0, \Psi(\mu)wu_0\rangle - \langle f , \Psi(\mu)f\rangle\Big)\;,$$
	which gives the uniform bound $\sup_{\mu \in \R} |h(\mu)| \leq 3/2$.
	
	There exists a constant $c>0$ such that $\vert e^{-t \mu} - e^{-t\lambda} \vert \le c \vert \mu - \lambda \vert$ uniformly over all $\mu$ in a given neighborhood of $\lambda$. Consequently for all $A$ close enough to $\{\lambda\}$, we find
	\begin{align*}
		\vert \langle M_{w^{-1}} E(A) (e^{-t\cH}-e^{-t\lambda})u_0, f\rangle \vert &\le c\int_A \vert \lambda - \mu \vert \vert h(\mu)\vert \varrho(d\mu)\;,
	\end{align*}
	so that
	\begin{align*}
		\langle \Psi(\lambda) M_w (e^{-t\cH}-e^{-t\lambda})u_0, f\rangle &= \lim_{A\downarrow \{\lambda\}} \frac1{\varrho(A)} \langle M_{w^{-1}} E(A) (e^{-t\cH}-e^{-t\lambda})u_0, f\rangle = 0\;.
	\end{align*}
	This concludes the proof of the claim.
	
	\textit{Step 4: Generalized eigenfunctions and growth estimate.}
	
	Let $\psi_\lambda$ be an eigenfunction of the (compact, self-adjoint and non-negative) operator $\Psi(\lambda)$, with eigenvalue say $\kappa > 0$, and set $f_\lambda = \sqrt{\kappa} w \psi_\lambda$. We have for $\varrho$-a.a.~$\lambda$
	$$ 0 = \langle \Psi(\lambda) M_w (e^{-t\cH}-e^{-t\lambda})u_0, \psi_\lambda\rangle = \sqrt{\kappa} \langle (e^{-t\cH}-e^{-t\lambda})u_0, f_\lambda\rangle\;.$$
	Thanks to the connection with the PAM, $e^{-t\cH}-e^{-t\lambda}$ can be applied to any function in $L^2$. Since it is a symmetric operator, we deduce that
	$$\sqrt{\kappa} \langle (e^{-t\cH}-e^{-t\lambda})u_0, f_\lambda\rangle = \sqrt{\kappa} \langle u_0, (e^{-t\cH}-e^{-t\lambda}) f_\lambda\rangle\;.$$
	This is enough to deduce that $(e^{-t\cH}-e^{-t\lambda})f_\lambda = 0$. We know that in the sense of distributions on $(0,\infty)\times \R$
	$$ \partial_t (e^{-t\cH} f_\lambda) = \partial_x^2(e^{-t\cH} f_\lambda) - \xi (e^{-t\cH} f_\lambda)\;.$$
	Consequently the identity $(e^{-t\cH}-e^{-t\lambda})f_\lambda = 0$ implies that
	$$ -f_\lambda'' + f_\lambda \xi = \lambda f_\lambda\;.$$
	Moreover, by Proposition \ref{Prop:Weight}, we know that $e^{-t\lambda}f_\lambda = e^{-t\cH} f_\gl$ is a continuous function that grows not faster than $w_{\beta',t}$ at infinity. We are allowed to take $\beta' < 1$ in Lemma \ref{Lemma:TC}, and thus, we deduce that $f_\gl$ satisfies \eqref{Eq:Growth}.
	
	\textit{Step 5: Kernel.}
	From the previous step and since $-y_\gl'' + y_\gl \xi= \gl y_\gl$ admits a two-dimensional space of solutions by Lemma \ref{Lemma:ODEz}, the range of $\Psi(\lambda)$ is of dimension $I(\lambda) \in\{1,2\}$. Let $\psi^{(i)}_\gl$, $1\le i \le I(\lambda)$ be independent normalized eigenfunctions of $\Psi(\lambda)$ associated to the eigenvalues $\kappa_\gl^{(i)} >0$ and set $f^{(i)}_\lambda = \sqrt{\kappa^{(i)}_\gl} w \psi^{(i)}_\lambda$. We have thus showed that for any bounded measurable functions $f,g:\R\to\R$ with compact supports
	\begin{align*}
		\langle f, E(A) g\rangle &= \langle f w, M_{w^{-1}} E(A) M_{w^{-1}} wg\rangle = \int_A \langle fw, \Psi(\lambda) wg \rangle \varrho(d\lambda)\\
		&= \int_A \sum_{i=1}^{I(\lambda)} \kappa^{(i)}_\gl\langle fw, \psi^{(i)}_\lambda\rangle \langle gw, \psi^{(i)}_\lambda\rangle \varrho(d\lambda)\\
		&= \int_A \int_{\R^2} f(x) \Big(\sum_{i=1}^{I(\lambda)} f^{(i)}_\lambda(x) f^{(i)}_\lambda(y)\Big) g(y) dx\,dy\; \varrho(d\lambda)\;.
	\end{align*}
\end{proof}

\subsection{The density of states and the support of the spectrum}

It was proven by Fukushima and Nakao~\cite{Fukushima} that the random Radon measure
$$ \frac1{L} \sum_{k\ge 1} \delta_{\lambda_{k,L}}(d\lambda)\;,$$
converges a.s.~vaguely to the deterministic measure $n(\lambda) d\lambda$ where $n$, the so-called density of states, is the derivative of the function
$$ N(\lambda) := \frac{1}{\sqrt{2\pi} \int_0^\infty \frac1{\sqrt v} \exp(-2\lambda v - \frac{v^3}{6}) dv}\;,\quad \lambda \in \R\;.$$

It is a standard result of the literature on Schr\"odinger operators that the Laplace transform of $n(\lambda)d\lambda$ satisfies for all $t\ge 0$
\begin{equation}\label{Eq:DOSPAM}
	\int_\lambda e^{-\lambda t} n(\lambda) d\lambda = \E[u^{(0)}(t,0)]\;.
\end{equation}
A proof can be found in~\cite[Chap VI.1.2]{Carmona}: although the potential considered therein is more regular, the arguments still apply; see also~\cite{Matsuda} for the case of the Anderson hamiltonian in dimension $2$.

We now determine the almost sure spectrum of $\cH$.

\begin{proposition}\label{Prop:Support}
	Almost surely, the spectrum of $\cH$ equals $\R$.
\end{proposition}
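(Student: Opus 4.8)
The plan is to use the density of states computed above together with the connection to the PAM established in this section. First I would show that the density of states is \emph{strictly positive} everywhere, so that $n(\gl)\,d\gl$ has full support $\R$, and then transfer this to the almost sure spectrum through an averaging identity for the trace-class operators of Lemma \ref{Lemma:TC} followed by an ergodicity argument.

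\textbf{Step 1 (positivity of $n$).} Writing $I(\gl):=\int_0^\infty v^{-1/2}\exp(-2\gl v-\tfrac{v^3}{6})\,dv>0$, we have $N=(\sqrt{2\pi}\,I)^{-1}$. Differentiation under the integral (the integrand and its $\gl$-derivatives being dominated) shows that $I$ is smooth with $I'(\gl)=-2\int_0^\infty v^{1/2}\exp(-2\gl v-\tfrac{v^3}{6})\,dv<0$ for every $\gl$. Hence $n(\gl)=N'(\gl)=-I'(\gl)/(\sqrt{2\pi}\,I(\gl)^2)>0$ for all $\gl\in\R$, so $\mathrm{supp}(n\,d\gl)=\R$.

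\textbf{Step 2 (averaged identity).} Fix $t>0$ and the weight $w$ of Lemma \ref{Lemma:TC}. By Proposition \ref{Prop:PAM} the kernel of $M_{w^{-1}}e^{-t\cH}M_{w^{-1}}$ is $w^{-1}(x)\,u^{(y)}(t,x)\,w^{-1}(y)$, so its trace is $\int_\R w^{-2}(x)\,e^{-t\cH}(x,x)\,dx$. The diagonal is nonnegative since $e^{-t\cH}$ is positivity preserving, and by stationarity of white noise $\E[u^{(x)}(t,x)]=\E[u^{(0)}(t,0)]$ for every $x$; Tonelli then gives
\begin{equation*}
	\E\big[\Tr(M_{w^{-1}}e^{-t\cH}M_{w^{-1}})\big]=\|w^{-1}\|_{L^2}^2\,\E[u^{(0)}(t,0)]=\|w^{-1}\|_{L^2}^2\int_\R e^{-t\gl}\,n(\gl)\,d\gl,
\end{equation*}
the last equality being \eqref{Eq:DOSPAM}. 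On the other hand, spectral calculus applied to $\varrho(A):=\Tr(M_{w^{-1}}E(A)M_{w^{-1}})$ (introduced in the proof of Proposition \ref{Prop:Desintegration}) yields $\Tr(M_{w^{-1}}e^{-t\cH}M_{w^{-1}})=\int_\R e^{-t\gl}\,\varrho(d\gl)$, which is finite for all $t>0$ by Lemma \ref{Lemma:TC}. Comparing, the nonnegative Radon measures $\E[\varrho(d\gl)]$ and $\|w^{-1}\|_{L^2}^2\,n(\gl)\,d\gl$ have the same bilateral Laplace transform on $(0,\infty)$; by injectivity of the Laplace transform on its domain of convergence they coincide. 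Since $n>0$, this forces $\E[\varrho(I)]>0$ for every nonempty open interval $I$.

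\textbf{Step 3 (from average to almost sure).} As $\varrho$ is a spectral measure for $\cH$, we have $\varrho(I)>0$ if and only if $E(I)\neq 0$, i.e.\ if and only if $I$ meets the spectrum. For each interval $I$ with rational endpoints, $\E[\varrho(I)]>0$ gives $\P(\varrho(I)>0)>0$. The law of $\xi$ is invariant and ergodic under spatial translations and $\cH$ is covariant under them, so the spectrum of $\cH$ is almost surely a deterministic set; the event $\{E(I)\neq 0\}$ is therefore a $0$--$1$ event, and being of positive probability it has probability one. Intersecting over the countably many rational intervals, almost surely every such $I$ meets the spectrum, so the spectrum is dense and, being closed, equals $\R$.

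\textbf{Main obstacle.} The analytic inputs (Step 1, the trace/Tonelli interchange, and Laplace uniqueness) are routine. The delicate point is Step 3: passing from positivity of the \emph{averaged} measure $\E[\varrho]$ to an almost sure conclusion genuinely requires the determinism of the spectrum of the ergodic family $\cH$, whose justification in the present singular setting (measurability of the spectral projections and covariance under the translation group) must be handled with care.
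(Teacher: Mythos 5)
Your proof is correct and follows essentially the same route as the paper: the paper likewise proves $\int_\Delta n(E)\,dE = \E[\Tr(M_f E(\Delta) M_f)]$ (with a unit-norm $f\in L^2$ playing the role of your $w^{-1}$) by matching Laplace transforms through the PAM kernel identity \eqref{Eq:DOSPAM} and stationarity, invokes the strict positivity of the density of states, and concludes via ergodicity that the almost surely deterministic spectrum meets every nonempty interval. The only cosmetic differences are your explicit computation showing $n>0$ (the paper just asserts it from the formula) and your use of the measure $\varrho$ from Proposition \ref{Prop:Desintegration} instead of a generic multiplication operator $M_f$.
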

\begin{proof}
	The ergodicity of white noise implies that the spectrum of $\cH$ is almost surely given by some deterministic set. It also implies that for any $\Delta \subset \R$, the rank of the spectral projector $E(\Delta)$ is almost surely equal to either $0$ or $+\infty$, see~\cite[Lemma V.2.1]{Carmona}. We claim that for any bounded and non-empty interval $\Delta\subset\R$ this rank is non-zero with positive probability. Then necessarily it is $+\infty$ almost surely. By considering an appropriate countable collection of sets $\Delta$, this suffices to deduce that almost surely any non-trivial interval intersects the spectrum, so that the latter is dense in $\R$.\\
	Fix a bounded Borel set $\Delta \subset \R$ and some non-negative function $f\in L^2(\R)$ of unit $L^2$ norm. We are going to show that
	\begin{equation}\label{Eq:DeltaMf}
		\int_\Delta n(E) dE = \E[ \Tr( M_f E(\Delta) M_f)]\;,
	\end{equation}
	where $M_f$ is the multiplication operator by $f$. This is enough to deduce the claim because, as soon as $\Delta$ is non-empty, the l.h.s.~is strictly positive (the density of states is positive everywhere as it can be checked from its explicit expression) and therefore the probability that $E(\Delta)$ is non-zero needs to be positive.
	
	To prove \eqref{Eq:DeltaMf}, we will check that the Laplace transforms of the two measures coincide. For any $t\ge 0$, by Fubini's Theorem and the linearity of the trace
	\begin{align*}
		\int_\lambda e^{-\lambda t}  \E[ \Tr( M_f E(d\lambda) M_f)] = \E[\Tr(M_f \int_\lambda e^{-\lambda t} E(d\lambda) M_f)] = \E[\Tr(M_f e^{-t\cH} M_f)]\;.
	\end{align*} 
	The operator $M_f e^{-t\cH} M_f$ is positive and self-adjoint. Its trace coincides with the Hilbert-Schmidt norm of its square root $M_{f} e^{-\frac{t}{2} \cH}$, which is given by
	$$ \int_{x,y} \Big({f}(x) e^{-\frac{t}{2} \cH}(x,y) \Big)^2 dx\, dy\;.$$
	Proposition \ref{Prop:PAM} and the semigroup property show that this last quantity equals
	$$ \int_x f(x) e^{-t \cH}(x,x) f(x) dx = \int_x u^{(x)}(t,x) f(x)^2 dx\;.$$
	Taking expectations, using the invariance in law of $u^{(x)}(t,x)$ under translating $x$ and the fact that $f^2$ integrates to $1$ we obtain
	$$ \E[\Tr(M_f e^{-tH} M_f)] = \E[u^{(0)}(t,0)]\;.$$
	We conclude with \eqref{Eq:DOSPAM}.
\end{proof}

\begin{remark}[Perturbation of the noise]
	The results on the PAM presented in Subsection \ref{Subsec:PAM} apply verbatim to $\xi_\varepsilon$: the only input therein is the local regularity of the noise and its growth at infinity. The growth estimates are then automatically satisfied since the argument are generic given the growth estimates on the PAM.
	The existence of the integrated density of states in the case of $\xi_\varepsilon$ remains true by ergodicity, however probably no explicit formula can be given and the positivity of the density of states cannot be ensured. However, there is an alternative way of showing that the spectrum of the Hamiltonian associated with $\xi_\varepsilon$ is dense in $\R$ using Weyl sequences. Recall that, given a self-adjoint operator $A$, $E$ lies in the spectrum of $A$ if there exists a Weyl sequence at energy $E$, that is, if there exists a sequence $\psi_n$ such that $\| \psi_n \| = 1$ and $\| (A - E)\psi_n\| \to 0$ as $n\to\infty$. See for instance~\cite[Lemma 2.17]{Teschl}.\\
	Fix $E\in\R$. For any $n\ge 1$, consider the shift invariant event
	$$ A_n := \Big\{\exists x \in \R: |\xi_\varepsilon(y) - E| \le 1/n\;, \forall y \in (x-n,x+n)\Big\}\;.$$
	It is not hard to check that the probability of this event is strictly positive, and, by ergodicity of $\xi_\varepsilon$, it is equal to $1$. Let $\chi_n$ be a smooth indicator function of the interval $(-n,n)$. Working on $\cap_n A_n$, we can find a random sequence $x_n$ such that the function
	$$ x\mapsto \sin(\frac{1}{\sqrt n} x) \chi_{n}(x-x_n)\;,$$
	up to an $L^2$ normalization, is a Weyl sequence at energy $E$ of $\cH^{(\varepsilon)}$.
\end{remark}

\section{Kotani-Simon approach}\label{Sec:Kotani}

In this section, we present the main lines of an alternative proof of Theorem \ref{Th:Expo} which is based on a famous article of Kotani and Simon~\cite{KotaniSimon}. Let $\cF_{\R\backslash [0,t_0]}$ be the sigma-field generated by $\xi$ outside $[0,t_0]$ for some $t_0>0$. Let $\sigma$ be an arbitrary spectral measure.\\

The main ingredient that is required is the following. One needs to know that conditionally given $\cF_{\R\backslash [0,t_0]}$, the law of $\theta_\gl(t_0)$ starting from $\theta_\gl(0) = \theta_0 \in [0,\pi)$ has a density which is bounded uniformly over all $\theta_0 \in [0,\pi)$ and uniformly over all $\lambda$ in compact sets. In the white noise case considered in this article, this bound is trivially guarantied by the independence properties of the white noise and the bound \eqref{Eq:plambda} on the density of the diffusion.\\

Given this bound, the exact same arguments as in~\cite[Sec. 2]{KotaniSimon} show that the spectral measure averaged over the randomness coming from the restriction of $\xi$ to $[0,t_0]$
$$ w(d\lambda) := \E\big[\sigma(d\lambda) \, \mid \, \cF_{\R\backslash [0,t_0]}\big]\;,$$
is absolutely continuous.\\

We also need some results that derive from Furstenberg and Oseledec Theorems, as presented at the end of Subsection \ref{Subsec:Lyapunov}. For any given $\gl\in\R$, let $A_\gl^\pm$ be the following events: there exists an angle $\alpha_\gl^\pm \in [0,\pi)$ such that	
	$$ \frac{\ln r_{\gl,\alpha_\gl^\pm}(t)}{\vert t \vert } \to -\gamma_\gl\;,\quad t\to \pm \infty\;,$$
	and for all $\theta_0 \ne \alpha_\gl^\pm$ we have
	$$ \frac{\ln r_{\gl,\theta_0}(t)}{\vert t \vert} \to \gamma_\gl\;,\quad t\to \pm \infty\;.$$
For $\gl$ fixed, the events $A^+_\gl$ and $A^-_\gl$ have probability one. By Fubini's Theorem, there exists a random set $\cL \subset \R$, whose complement is of zero Lebesgue measure and such that almost surely, for all $\lambda \in \cL$, the events $A_\gl^+$ and $A_\gl^-$ are satisfied.\\
An important remark should be made:  the events $A_\gl^\pm$ belong to $\cF_{\R\backslash [0,t_0]}$. Indeed, since Oseledec Theorem is a deterministic consequence of the convergence \eqref{Eq:Furstenberg} implied by Furstenberg Theorem, the events $A_\gl^\pm$ coincide, up to a negligible set, with the event \eqref{Eq:Furstenberg}. The latter only depends on the behavior of $\xi$ at $\pm\infty$, consequently it is measurable w.r.t.~$\cF_{\R\backslash [0,t_0]}$, and the same holds for $A_\gl^\pm$. (Of course the r.v.~$\alpha_\gl^+$ depends on the behavior of $\xi$ on $[0,t_0]$, but its existence only requires knowledge on the tail sigma field.)

With these ingredients at hand, we can proceed to the proof of Anderson localization.

\begin{proof}[Alternative proof of Theorem \ref{Th:Expo}]
By absolute continuity almost surely
$$ w(\R\backslash \cL) = 0\;.$$
Since $\cL$ is $\cF_{\R\backslash [0,t_0]}$-measurable we deduce that almost surely $\sigma(\R\backslash \cL) = 0$. Almost surely for every $\gl\in\cL$, every solution of the eigenproblem $-y_\gl'' + y_\gl \xi = \lambda y_\gl$ either decays exponentially at both infinities (and is therefore in $L^2$) or its modulus (that is, $\sqrt{y_\gl^2 + (y'_\gl)^2}$ ) grows exponentially at least at one infinity. Denote by $D$ the random set of all $\gl \in \cL$ such that there exists a solution of the eigenproblem that decays exponentially at both infinities: this set lies in the point spectrum of $\cH$ and is therefore necessarily countable. Almost surely for every $\gl\in\cL\backslash D$, the moduli of all solutions of the eigenproblem grow exponentially at least at one infinity and therefore by Proposition \ref{Prop:GEF}, almost surely $\sigma(\cL\backslash D) = 0$. Consequently almost surely $\sigma$ only charges the countable set $D$ and is therefore pure point. In addition, we deduce that the eigenfunctions satisfy the exponential decay estimate of the statement of the theorem. Regarding the density of the spectrum, it was already proven in Proposition \ref{Prop:Support}.
\end{proof}

\begin{remark}[Perturbation of the noise]
	The present proof is quite robust, compared to the one presented in the previous section. Consider the noise $\xi_\varepsilon$. The absolute continuity of $\theta_\gl(t_0)$ given $\cF_{\R\backslash [0,t_0]}$ can be derived by a perturbation argument, similar to~\cite[Sec 3]{KotaniSimon}. Regarding the exponential growth/decay of the solution to the ODE, the proof proceeds as follows. The convergence of $\frac1{n} \ln \| U_\gl(n)\|$ can be derived from the sub-additive Kingman ergodic theorem, and its extension to real parameters $t>0$ can be derived similarly as we did in Subsection \ref{Subsec:Lyapunov}. The strict positivity of the limit (for almost all $\lambda \in \R$) is more subtle and can be deduced from a deep result of Kotani~\cite[Th 4.3]{Kotani}. Then one can apply Oseledec Theorem as in the white noise case.
\end{remark}

\bibliographystyle{Martin}
\bibliography{library}

\end{document}